\documentclass[12pt]{amsart}
\usepackage{amsmath}
\usepackage{amsthm}
\usepackage{amsfonts}
\usepackage{amssymb}
\usepackage{bm}
\usepackage{mathrsfs}
\usepackage{graphicx}
\usepackage{cprotect}
\usepackage{setspace}
\usepackage{amscd}
\usepackage{mathtools}
\usepackage{commath}
\usepackage{enumerate}
\usepackage{color}
\usepackage{hyperref}
\hypersetup{
    colorlinks=true,
    linkcolor=blue,
    filecolor=magenta,      
    urlcolor=cyan,
}
\usepackage{xcolor}
\usepackage{amsfonts}
\newcommand{\R}{\mathbb{R}} 
 
\newcommand{\C}{\mathbb{C}} 
\newcommand{\Z}{\mathbb{Z}}

\newcommand{\h}{\mathbb{H}}
\newcommand{\A}{\mathbb{A}}
\newcommand{\im}{\text{Im}}
\renewcommand{\O}{\mathcal{O}}

\newcommand{\calO}{\mathcal{O}}
\newcommand{\calZ}{\mathcal{Z}}
\newcommand{\fraka}{\mathfrak{a}}

\newcommand{\frakd}{\mathfrak{d}}
\newcommand{\frakn}{\mathfrak{n}}
\newcommand{\frakm}{\mathfrak{m}}
\newcommand{\frakp}{\mathfrak{p}}
\newcommand{\frakq}{\mathfrak{q}}
\newcommand{\Cl}{\mathrm{Cl}}
\newcommand{\bj}{\mathbf{j}}
\newcommand{\pr}{\operatorname{pr}}
\newcommand{\ord}{\mathrm{ord}}
\newcommand{\real}{\text{Re}}

\newcommand{\PSL}{\operatorname{PSL}}
\newcommand{\SL}{\operatorname{SL}}
\newcommand{\GL}{\operatorname{GL}}
\newcommand{\U}{\operatorname{U}}

\newcommand{\xqedhere}[2]{%
  \rlap{\hbox to#1{\hfil\llap{\ensuremath{#2}}}}}

\newtheorem{thm}{Theorem}

\newtheorem{lem}[equation]{Lemma}
\newtheorem{prop}[equation]{Proposition}

\newtheorem{defi}[equation]{Definition}

\numberwithin{equation}{section}

\title{Quantum ergodicity of Eisenstein series for Bianchi groups}
\author{Doyon Kim}
\address{Mathematisches Institut, Endenicher Allee 60, 53115 Bonn, Germany
}
\email{kimdoyon@math.uni-bonn.de}
\author{Youngmin Lee}
\address{Department of Mathematics, Kyonggi University,	154-42, Gwanggyosan-ro, Yeongtong-gu, Suwon-si, Gyeonggi-do, Republic of Korea}
\email{youngminlee@kyonggi.ac.kr}

\date{\today}

\begin{document}

\begin{abstract}
We prove the quantum ergodicity of Eisenstein series on the arithmetic hyperbolic 3-manifold $\operatorname{PSL}_2(\mathcal{O}_F)\backslash \mathbb{H}^3$, where $F$ is an imaginary quadratic field with ring of integers $\mathcal{O}_F$ and class number $h_F\geq 1$. This extends the work of Koyama, who proved the result in the case $h_F=1$, and establishes the first instance of quantum ergodicity of Eisenstein series over number fields with nontrivial class groups.
\end{abstract}

\maketitle

\section{Introduction}
Let $\h^2$ denote the upper half plane, and let $\Gamma$ be a discrete subgroup of $\PSL_2(\R)$ such that $X=\Gamma\backslash \h^2$ is compact. Let $\Delta$ be the Laplace operator on $X$, and let $(\phi_j)_{j=1}^\infty$ be an orthonormal basis of the set of square-integrable Laplace eigenfunctions of $X$, where each $\phi_j$ has eigenvalue $\lambda_j$. Assume that $(\lambda_j)_{j=1}^\infty$ is nondecreasing. Let $d \mu_j$ denote the probability measure, $d \mu_j=|\phi_j|^2 dV$. Shnirelman, Colin de Verdi\`ere, and Zelditch \cite{verdiere1985ergodicite,snirelman1974ergodic, zelditch1987uniform} proved that there exists a subsequence $(\phi_{j_k})_{k=1}^\infty$ of density one such that the associated probability measures converge weakly to the normalized volume measure, in particular,
\[\lim_{k\to \infty} \mu_{j_k}(A)=\frac{\mathrm{Vol}(A)}{\mathrm{Vol}(X)}\]
for any continuity set $A\subset X$. This phenomenon has been termed \emph{quantum ergodicity}. \par
In \cite{rudnick1994behaviour}, Rudnick and Sarnak considered the noncompact surfaces of the form $Y=\Gamma\backslash \h^2$, where $\Gamma$ is a congruence subgroup. They conjectured that 
\[\lim_{j\to \infty} \mu_{j}(A)=\frac{\mathrm{Vol}(A)}{\mathrm{Vol}(Y)}.\]
This strengthneing of quantum ergodicity is referred to as \emph{quantum unique ergodicity}. For the surface $Y=\PSL_2(\Z)\backslash \h^2$, the quantum unique ergodicity of Hecke--Maass cusp forms was proved in the works of \cite{lindenstrauss2006invariant} and \cite{soundararajan2010que}, and in \cite{holowinsky2010mass} an analogue of quantum unique ergodicity of holomorphic Hecke eigenforms was established. \par
In the noncompact setting, the surfaces possess a continuous spectrum as well as the discrete spectrum. In \cite{luosarnak1995qe}, Luo and Sarnak proved the continuous spectrum analogue of quantum ergodicity. Let $E(z,s)$ be the Eisenstein series on $\PSL_2(\Z)$, and let $d\mu_t=|E(z,1/2+it)|^2 dV$. They proved
\[\lim_{t\to\infty} \frac{\mu_t(A)}{\mu_t(B)}=\frac{\mathrm{Vol}(A)}{\mathrm{Vol}(B)}\]
for any compact Jordan measurable subsets $A,B\subset \PSL_2(\Z)\backslash \h^2$. \par
The quantum ergodicity of Eisenstein series admits natural generalizations to arithmetic quotients associated with number fields. In this direction, Koyama \cite{koyama2000qe} established quantum ergodicity for Eisenstein series attached to imaginary quadratic fields of class number one, covering a finite list of fields, and Truelsen \cite{truelsen2011qe} proved an analogous result for Eisenstein series on Hilbert modular groups over totally real fields with narrow class number one.
\par 
In the present work, we prove the quantum ergodicity of the Eisenstein series on $\PSL_2(\calO_F)\backslash \h^3$, where $\h^3$ is the hyperbolic 3-space and $F$ is an imaginary quadratic field with ring of integer $\calO_F$ with class number $h_F>1$. This provides the first extension of quantum ergodicity for Eisenstein series beyond the class number one setting.
\begin{thm}\label{thm:qe}
Let $F$ be an imaginary quadratic field with class number $h_F>1$, and let $X = \PSL_2(\calO_F)\backslash \mathbb \h^3$. For a cusp $\eta$ of $\PSL_2(\calO_F)$, let $E_{\eta}(v,s)$ denote the Eisenstein series associated to $\eta$. Then, for any compact Jordan measurable subsets $A,B\subset X$, we have
    \[\lim_{t\to\infty}\frac{\mu_{\eta,t}(A)}{\mu_{\eta,t}(B)}=\frac{\mathrm{Vol}(A)}{\mathrm{Vol}(B)},\]
    where $\mu_{\eta,t}=|E_{\eta}(v,1+it)|^2 dV$.
\end{thm}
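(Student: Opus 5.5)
We follow the Luo--Sarnak strategy as adapted by Koyama, and reduce the theorem to an asymptotic for the integral of $|E_\eta(v,1+it)|^2$ against a fixed test function:
\[
\int_X \psi(v)\,|E_\eta(v,1+it)|^2\,dV \sim \frac{c_F\log t}{\mathrm{Vol}(X)}\int_X \psi\,dV \qquad (t\to\infty),
\]
with $c_F>0$ depending only on $F$ (and possibly on $\eta$). It suffices to prove this for $\psi$ an incomplete Eisenstein series (pseudo-Eisenstein series) attached to each cusp and for $\psi$ a Hecke--Maass cusp form. Indeed, the span of these test functions is dense in $C_c(X)$ in sup norm, so a standard approximation argument gives the ratio statement for Jordan measurable compact $A,B$. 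In that step we use the asymptotic as an upper bound, with a smooth majorant/minorant of $\mathbf 1_A$.

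First we describe the Eisenstein series concretely. For class number $h_F>1$, the cusps of $\PSL_2(\mathcal O_F)$ correspond to the ideal classes $\mathfrak a\in\Cl(F)$. The Fourier expansion of $E_\eta$ at a cusp $\eta'$ has constant term $\delta_{\eta\eta'}y^{2s}+\phi_{\eta\eta'}(s)y^{2-2s}$. Its nonconstant coefficients are divisor-type sums $\sum_{\mathfrak a\mathfrak b=(n)\mathfrak c}\ldots$ times $K_{2s-2}$ Bessel functions, divided by a partial zeta function $\zeta(\mathfrak c,2s)$. To handle the class group, we decompose $E_\eta$ as a linear combination of ``Hecke--class-group twisted'' Eisenstein series $E(v,s,\chi)=\frac1{h_F}\sum_\eta \chi(\eta)E_\eta$ for $\chi\in\widehat{\Cl(F)}$. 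Each of these is a Hecke eigenform whose coefficients are $\lambda_\chi(\mathfrak n,s)=\sum_{\mathfrak a\mathfrak b=\mathfrak n}\chi(\mathfrak a\bar{\mathfrak b}^{-1})N(\mathfrak a/\mathfrak b)^{s-1}$ and whose normalizing factor is $L(2s,\chi)$ (up to $\chi^2$ subtleties). Then $|E_\eta|^2$ becomes a double sum $\sum_{\chi,\chi'}\bar\chi(\eta)\chi'(\eta)E(\cdot,\chi)\overline{E(\cdot,\chi')}$.

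Next come the two families of test functions. For a pseudo-Eisenstein series $F_h(v)=\sum_{\gamma} h(y(\sigma_{\eta'}^{-1}\gamma v))$ we unfold, using a regularized (Zagier) inner product to deal with the non-$L^2$ growth. This gives a Mellin integral of $h$ against the Rankin--Selberg integral of $E(\cdot,\chi)\overline{E(\cdot,\chi')}$. That integral is a ratio of products of Hecke $L$-functions of the form $\zeta_F$ and $L(\cdot,\chi\bar\chi')$-type factors, divided by $|L(2+2it,\chi)L(2+2it,\chi')|$ terms. Shifting contours to $\Re=1/2$, the main term comes from the pole at $w=1$, which is present exactly when the product contains $\zeta_F(w)^2$-type factors. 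This yields $\log t$ from $\frac{L'}{L}(1+2it)$ via the diagonal $\chi=\chi'$, and orthogonality of characters then recovers $\int F_h$. The off-diagonal pairs give $o(\log t)$, using the lower bound $L(1+it,\chi)\gg (\log t)^{-1}$ and the convexity bound on the critical line. The constant terms contribute $O(1)$ after bounding $|\phi_{\eta\eta'}(1+it)|=O(1)$.

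For a cusp form $u_j$ we unfold $\langle u_j,E(\cdot,\chi)\overline{E(\cdot,\chi')}\rangle$ into the triple product $L(1/2,u_j\otimes\chi\ldots)L(1/2+2it,u_j\otimes\ldots)/(L(1+2it,\chi)L(1-2it,\bar\chi'))$. We need this to be $o(\log t)$, which requires a subconvex bound in $t$ for $L(1/2+it,u_j\otimes\chi)$ over $F$, a degree-4 $L$-function over $\mathbb Q$. We would use the Weyl-type or Burgess-type subconvexity available for $\GL_2$ over number fields (Michel--Venkatesh, or Petridis--Sarnak/Koyama for Bianchi forms), combined with the Gamma-factor decay $e^{-\pi|t|}$ cancelling against the growth of the Eisenstein normalisation. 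We expect this cusp-form step, and in particular verifying that the class-group twisted triple products factor correctly so that existing subconvexity applies, to be the main obstacle. The other delicate point is controlling the off-diagonal character pairs in the incomplete Eisenstein computation uniformly enough to keep them below $\log t$.
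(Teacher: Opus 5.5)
Your outline follows the paper's overall route: unfolding, averaging over class group characters to reach Euler products, a double pole producing $\log t$, and subconvexity for the cuspidal part. But it has a genuine gap exactly at the step you flag as the main obstacle, and that step is the paper's main new ingredient.

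You assume that $\PSL_2(\calO_F)\backslash\h^3$ has a basis of Hecke--Maass cusp forms whose Fourier coefficients at every cusp are Hecke eigenvalues. For $h_F>1$ there is no such classical Hecke theory. The operators $T_\frakp$ for non-principal $\frakp$ mix the $h$ components of the adelic quotient. Worse, when $h_F$ is even, an even Maass form $f$ does not even lift to a single adelic form with trivial central character. The reason is that $f(A_j^{-1}\cdot)$ is invariant only under a group of index $|\Cl_F[2]|$ in the group such a lift would require (Lemma~\ref{lem:Gammajcosetnew}).

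What is missing is the content of Section~\ref{sec:Maass}:
\begin{enumerate}
\item Decompose $f(A_j^{-1}\cdot)$ into $\Cl_F[2]$-isotypic pieces.
\item Extend each piece, via a $(\calZ,\chi)$-compatibility condition, to an adelic form with central character $\omega_\chi$ for a class group character $\chi$.
\item Decompose the result into Hecke eigenforms $\phi$.
\item Observe (Lemma~\ref{lem:AjGammaj}, which uses evenness of $f$ to get $\GL_2(\calO_F)$-invariance) that $f(A_j^{-1}\cdot)$ is the component attached to $[\frakm_j^2]$, not $[\frakm_j]$.
\end{enumerate}
The last point is what makes the coefficients at $\eta_j$ proportional to $\lambda((n)\frakm_j^{-2})/N((n)\frakm_j^{-2})$ for $n\in\frakm_j^2$. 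Only then does averaging over an auxiliary character $\chi_2$ turn the unfolded sum into
\[
\frac{L(\tfrac{s-it}{2},\phi\otimes\chi_2)\,L(\tfrac{s+it}{2},\phi\otimes\chi_1\chi_2)}{L(s,\chi\chi_1\chi_2^2)} .
\]
The central character $\chi$ of $\phi$ appearing in the denominator is your unresolved ``$\chi^2$ subtlety''. At that point Michel--Venkatesh applies.

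Two claims in your incomplete Eisenstein step are also incorrect.

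First, the $\log t$ does not come from $\frac{L'}{L}(1+2it)$. In the residue at the double pole it comes from the digamma terms $\frac{\Gamma'}{\Gamma}(1\pm it)$ via Stirling. The accompanying $\frac{L'}{L}(1\pm it,\chi)$ terms are error terms that must be shown to be $o(\log t)$. The classical zero-free region gives only $O(\log t)$, the same size as the main term. So you need a Vinogradov-type zero-free region for class group $L$-functions (Lemma~\ref{lem:Lft-bounds}).

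Second, convexity does not suffice on the shifted line. After dividing by $\xi(1+it,\chi_1)\xi(1-it,\chi_2)$, the Gamma factors contribute $\asymp t^{-1}$. The two Hecke $L$-values at height $t$ have analytic conductor $\asymp t^2$, so convexity bounds their product only by $t^{1+\epsilon}$. That leaves $t^{\epsilon}$ rather than $o(\log t)$. As for $\PSL_2(\Z)$ in Luo--Sarnak, you need a subconvex bound here as well (Lemma~\ref{lem:gl1subconv}).
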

The precise definition of $E_{\eta}(v,s)$ is given in Section~\ref{sec:arith}. Our proof works equally well for the $h_F=1$ case, but in this case the proof coincides with that of Koyama. We therefore restrict our attention to $h_F>1$.
\par
In quantum ergodicity problems, the proofs typically follow a common general strategy. First, we use the spectral decomposition to reduce the problem to the study of the asymptotic behavior of integrals of the form
\begin{equation}\label{basicform}\int_{X}f d\mu_{t},\end{equation}
with $X$ the underlying manifold and $t$ the limiting parameter, 
where $f$ is either a cuspidal Laplace eigenfunction (which we refer to as a Maass cusp form) or an incomplete Eisenstein series. After that, we unfold the integral and use the orthogonality of Fourier modes to express the integral in terms of $L$-functions. The desired asymptotics then follow from the subconvexity bounds for the associated $L$-functions. This approach was employed by Luo and Sarnak in \cite{luosarnak1995qe} for Eisenstein series on $\PSL_2(\Z)\backslash \h^2$, and subsequently adapted by Koyama \cite{koyama2000qe} to establish quantum ergodicity for imaginary quadratic fields of class number one. \par 
However, when the class number exceeds one, this process does not go as smoothly, since the classical framework for automorphic forms on $\Gamma\backslash \h^3$ fails to reflect the global Hecke structure. In this case, Hecke operators are indexed by ideals, and non-principal ideals cannot be realized as single double cosets. Consequently, the notion of a Hecke--Maass form becomes ambiguous at the classical level. Another difficulty arises from the fact that Fourier expansions at a fixed cusp encode only partial ideal-theoretic information, while the relevant $L$-functions are defined globally over all ideal classes.
\par 
These issues do not arise when the class number is one, because in this case every ideal is principal, classical Hecke operators admit a double-coset description, and a single cusp suffices to recover the full Hecke data. In particular, a classical Hecke–Maass cusp form on $\Gamma\backslash\h^3$ lifts naturally to a single adelic automorphic form, and the Fourier expansion at infinity captures the full global Hecke data. \par
It is essential to introduce the adelic framework for the higher class number case in order to restore the Hecke action. However, the construction of an adelic realization of a Maass cusp form is not merely a matter of careful bookkeeping. Although it is well known that an adelic automorphic form on $\GL_2(\A_F)$ corresponds to an $h$-tuple of classical functions satisfying suitable modularity properties (see, for instance, \cite{boreljacquet1979automorphic}), assigning such an $h$-tuple to a given Maass cusp form $f$ is not canonical. As we shall see later, it turns out that $f$ cannot be lifted to a single adelic automorphic form when the class number is even. \par 
Our main novelty lies in the remedy of this obstruction, by constructing an adelic realization of a classical Maass cusp form as a linear combination of adelic automorphic forms with varying central characters. Since odd Maass cusp forms do not contribute to the integral \eqref{basicform}, we restrict our attention to even Maass cusp forms. 
\begin{thm}\label{thm:f_adelic}
Let $f$ be an even Maass cusp form on $\PSL_2(\calO_F)\backslash\h^3$. Then $f(v)=f(z+r\bj)$ can be expressed as a finite linear combination of functions of the form
\[\Phi\biggl(i_\infty \begin{pmatrix}
    r & z \\ 0 & 1
\end{pmatrix}\biggr),\]
where $\Phi$ is an adelic Hecke eigenform whose central character is a Hecke character lifted from a class group character. Here, $i_\infty (\cdot)$ denotes the adelic embedding with trivial finite component.
\end{thm}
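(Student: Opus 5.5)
We will work with strong approximation for $\GL_2$ over $F$. Put $K_f=\prod_{\frakp}\GL_2(\calO_{F,\frakp})$ and fix idele representatives $a_1=1,a_2,\dots,a_h$ of $\Cl_F$. Then
\[\GL_2(F)\backslash \GL_2(\A_F)/K_f\,Z(\R)\;\cong\;\bigsqcup_{i=1}^{h}\Gamma_i\backslash \GL_2(\C)/\R_{>0},\]
where $\Gamma_i$ is the stabilizer in $\GL_2(F)$ of $g_i=\mathrm{diag}(a_i,1)_f$. In particular $\Gamma_1=\GL_2(\calO_F)$.

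Since $\GL_2(\calO_F)$ contains the scalars $\calO_F^\times$ and $\mathrm{diag}(\epsilon,1)$, the quotient $\GL_2(\calO_F)\backslash\h^3$ equals $\PSL_2(\calO_F)\backslash\h^3$ modulo the action $z\mapsto \epsilon z$. Evenness of $f$ means exactly $f(\epsilon z+r\bj)=f(z+r\bj)$ for units $\epsilon$, so $f$ descends to $\GL_2(\calO_F)\backslash \h^3$. This is the component $i=1$ of the disjoint union above.

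\textbf{The obstruction.} The center $Z(\A_F)$ acts on the set of components through the square map $\Cl_F\to\Cl_F$. Multiplying by the scalar $a$ carries component $i$ to the component of $a^2a_i$. For a function $\Phi$ with central character $\omega$, trivial on $Z(\R)$ and on $Z(\widehat{\calO}_F)$ (so $\omega$ is a class group character), this determines $\Phi$ on the components in the orbit $\Cl_F^2\cdot 1$ from its values on component $1$. The values are consistent only if $\omega$ is compatible with the scalars $\gamma\in F^\times$ that lie in $\Gamma$-type stabilizers. When $h$ is odd the square map is bijective, so component $1$ determines everything and a single lift exists. When $h$ is even, one must also populate the components outside $\Cl_F^2$.

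\textbf{Construction.} Given $f$, we define $\Phi_\omega$ for each character $\omega$ of $\Cl_F$. On component $1$ we set $\Phi_\omega=f$. On the components $a^2a_1$ we set $\Phi_\omega(ag)=\omega(a)\Phi_\omega(g)$. On the components outside $\Cl_F^2$ we set $\Phi_\omega=0$, or any chosen extension. Averaging over $\omega$ with weights $1/|\widehat{\Cl_F}|$ and restricting to component $1$ recovers $f$; this is the orthogonality of characters of $\Cl_F/\Cl_F^2$ applied to the $2$-torsion ambiguity.

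The well-definedness check is that for $a$ with $a^2$ principal-trivial in $\Cl_F$ we need $f(\text{translate})=\omega(a)f$. This forces a decomposition of $f$ into eigencomponents under the finite group of "Atkin–Lehner type" involutions coming from $\Cl_F[2]$, which act on the level-one quotient. We decompose $f=\sum_\chi f_\chi$ under this finite abelian group; each $f_\chi$ then lifts with central characters $\omega$ restricting to $\chi$. Cuspidality and the Laplace eigenvalue are preserved, since all the operations commute with $\Delta$ and with constant terms.

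\textbf{Hecke eigenforms.} The space of adelic cusp forms with fixed central character $\omega$, fixed $K_f$-type and fixed Laplace eigenvalue is finite dimensional, and the Hecke operators $T_\frakp$ for all $\frakp$ act on it as commuting normal operators. We diagonalize, writing each lift $\Phi_\omega$ as a finite sum of Hecke eigenforms $\Phi$ with central character $\omega$. Restricting to $g=i_\infty\left(\begin{smallmatrix} r & z\\ 0&1\end{smallmatrix}\right)$ on component $1$ then gives the stated expression for $f$.

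The main obstacle is the consistency step: identifying exactly which elements of $F^\times Z(\A_F)$ stabilize component $1$, namely the units and the $\gamma$ with $(\gamma)=\frakb^2$. We must show that the resulting finite group acts on the space of Maass forms on $\PSL_2(\calO_F)\backslash\h^3$ so that the eigen-decomposition above is legitimate. We would first treat the unit part, where evenness enters, and then use the isomorphism $\Cl_F[2]\cong$ (the ideals $\frakb$ with $\frakb^2$ principal) to realize the remaining involutions via matrices in $\GL_2(F)$ normalizing $\GL_2(\calO_F)$.
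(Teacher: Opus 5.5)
Your proposal is correct and is essentially the paper's argument, specialized to $j=1$ in Proposition~\ref{prop:f_adelic}. Evenness gives left $\GL_2(\calO_F)$-invariance. One then decomposes $f$ into eigencomponents under the finite group $\calZ_\infty\widetilde{\Gamma}^{[1]}/\calZ_\infty\GL_2(\calO_F)\hookrightarrow\Cl_F[2]$ of Atkin--Lehner-type elements. Each component is lifted to an adelic form whose central character $\omega_\chi$ comes from a class group character $\chi$ extending the given character of $\Cl_F[2]$, with the lift set to zero on the non-square components. Finally one diagonalizes the Hecke operators on the finite-dimensional space.

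The consistency step you defer is the paper's Lemma~\ref{lem:Gammajcosetnew}, and you only need its easy half: if $\delta\in\GL_2(F)\cap z\,G_\infty K_f$ with $\fraka(z_f)=(\lambda)$ principal, then $\lambda^{-1}\delta\in\GL_2(F)\cap G_\infty K_f=\GL_2(\calO_F)$. Hence the class $[\fraka(z_f)]\in\Cl_F[2]$ determines the action on $f$, and the surjectivity onto $\Cl_F[2]$, which the paper proves by an explicit Chinese Remainder Theorem construction, is not needed.
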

A more precise version of this statement, which treats the functions $f(A_j^{-1} v)$ associated to different cusps, is given in Proposition~\ref{prop:f_adelic}. This restores a natural Hecke
action at the adelic level. After this, we introduce a cusp-averaging technique that allows global $L$-function data to be recovered from cuspwise Fourier expansions.
\par
In Section~\ref{sec:arith}, we provide basic notation and background on Eisenstein series on $\Gamma\backslash\h^3$. In Section~\ref{sec:Maass}, we construct the adelic realization of Maass cusp forms on $\Gamma\backslash\h^3$. We record bounds of $\GL_1$ and $\GL_2$ $L$-functions in Section~\ref{sec:lfunctions}, and prove the quantum ergodicity of Eisenstein series in Section~\ref{sec:qe} using these bounds. 
\section*{Acknowledgements}
The first author was supported by ERC Advanced Grant 101054336 and Germany’s 
Excellence Strategy grant EXC-2047/1 - 390685813. The second author was supported by a KIAS Individual Grant (MG086302) at Korea Institute for Advanced Study. We thank Valentin Blomer and Edgar Assing for helpful comments.

\section{Arithmetic hyperbolic 3-manifolds}\label{sec:arith}
We regard the hyperbolic 3-space $\h^3$ as a subset of Hamiltonian quaternions, and write an element $v$ of $\h^3$ as $v=z+r\bj=x+yi+r\bj$ with $z\in \C$ and $r>0$. Accordingly, we use the notation 
\begin{equation}
    \real(z)=x, \quad \im(z)=y, \quad \Im(v)=r.
\end{equation}
The groups $\GL_2(\C)$ and $\PSL_2(\C)$ act on $\h^3$ by
\[gv=q^{-1} (av+d)(cv+d)^{-1} q, \quad g=\begin{pmatrix}
    a & b \\ c & d
\end{pmatrix},\]
where $q=\sqrt{\det g}$ and the inverse is taken in the skew field of quaternions. The volume element of $\h^3$ is given by $dV=dx dy dr/r^3$ and the corresponding Laplace operator is
\[\Delta=r^2\left(\frac{\partial^2}{\partial x^2}+\frac{\partial^2}{\partial y^2}+\frac{\partial^2}{\partial r^2}\right)-r \frac{\partial}{\partial r}.\]
Let $F$ be an imaginary quadratic field and $\calO_F$ be the ring of integers of $F$. Let $\Gamma=\PSL_2(\O_F)$. Then $\Gamma$ is a discrete subgroup of $\PSL_2(\C)$. We write $\mathrm{Cl}_{F}$ for the class group of $F$, $\mathcal{M}$ be the set of fractional ideals, $d_F$ for the discriminant of $F$, and $\frakd$ for the different ideal of $F$. For $\frakm\in \mathcal{M}$, we let $N(\frakm)$ denote the norm of $\frakm$.
\par 
Let $h=h_F$ be the class number of $F$. Throughout this paper, we assume $h>1$. Then it is well known that the number of $\Gamma$-inequivalent cusps is $h$, because there is a bijection between the class group $\Cl_F$ and the set of $\Gamma$-equivalence classes of the cusps of $\Gamma$. We say that a matrix $\begin{pmatrix}
    a & b \\ c & d
\end{pmatrix}$ is quasi-integral if 
\begin{equation}\label{quasiintegral}ac, ad, bc, bd\in \calO_F.\end{equation}
For a cusp $\eta$ of $\Gamma$, we attach a quasi-integral matrix
\begin{equation}\label{Aetadef}
 A_\eta=\begin{pmatrix}
    a & b \\ 1 & -\eta
\end{pmatrix}\in \SL_2(F),  
\end{equation}
which is possible by  \cite[Lemma~2.9, p.~368]{EGM2},
and a fractional ideal
\begin{equation}\label{frakmeta}
\frakm_\eta=\langle 1,\eta \rangle.   
\end{equation}
Check that $\eta=A_\eta^{-1}\infty$. If $\eta=\infty$, we let $A_\eta=I_2$ and $\frakm_\eta=\calO_F$. We write $\Gamma_\eta$ for the stabilizer of $\eta$, so 
\begin{equation}\label{Gammaeta}\Gamma_\eta=\{\gamma\in\Gamma:\gamma \eta=\eta\}.\end{equation}
By \cite[Lemma~2.2, p.~365]{EGM2}, we have
\begin{equation}\label{Gammaetaconj}A_\eta\Gamma_{\eta} A_\eta^{-1}=\left\{\begin{pmatrix}
    1 & b \\ 0 & 1
\end{pmatrix}: b\in\frakm_\eta^{-2}\right\}.\end{equation}
For a cusp $\eta$ of $\Gamma$, the Eisenstein series associated to the cusp $\eta$ is defined as follows.
\begin{defi}
For a cusp $\eta$ of $\Gamma$, the Eisenstein series associated to the cusp $\eta$ is given by
 \begin{equation} \label{Eetadef}
   E_{\eta}(v,s)=\sum_{\gamma\in \Gamma_{\eta}\backslash \Gamma} \Im(A_\eta \gamma v)^{s},
 \end{equation}
where $A_\eta$ is the quasi-integral matrix defined in \eqref{Aetadef}.
\end{defi}
By definition~(2.5) (p.~99), Theorem~1.7 (p.~363), and Theorem~3.8 (p.~377) of \cite{EGM2}, the Eisenstein series $E_\eta(v,s)$ has a meromorphic continuation to the whole $s$-plane, holomorphic for $\real(s)>1$ except for a simple pole at $s=2$. At $s=2$, we have
\[\mathrm{Res}_{s=2}E_\eta (v,s)=\frac{2\pi^2 N(\frakm_\eta^{-2})}{|d_F|\zeta_F(2)},
\]
where $\zeta_F(s)$ denotes the Dedekind zeta function of $F$. Therefore, by \cite[Proposition~2.2 (p.~245), Theorem~3.4 (p.~267)]{EGM2}, the spectrum of $\Delta$ consists of three types: the constant function, the cuspidal Laplace eigenfunctions, which we call Maass cusp forms, and Eisenstein series attached to each cusp. In view of this decomposition, to establish quantum ergodicity it suffices to understand the integrals of the form \eqref{basicform} where $f$ is either a Maass cusp form or an incomplete Eisenstein series attached to a cusp.
\section{Maass cusp forms on $\Gamma\backslash \h^3$} \label{sec:Maass}
When $h>1$, the Hecke operators cannot be defined in a classical way that uses double cosets, since the Hecke operators are indexed by the integral ideals, whereas only the principal ideals admit such classical definition. In order to relate the Fourier coefficients of Maass cusp forms at each cusp with Hecke eigenvalues, we need to adopt the adelic viewpoint. The main goal of this section is to prove Proposition~\ref{prop:f_adelic}, which describes the Maass cusp forms viewed at each cusp in terms of adelic Hecke eigenforms.
\par 
Let $\A=\A_F$ be the ring of adeles over $F$ with archimedean place $F_\infty=\C$ and finite places $F_\frakp$ indexed by prime ideals of $F$. Let $\calO_\frakp$ be the local ring of integers of $F_\frakp$ and let $\varpi_\frakp$ denote the local uniformizer of $\calO_{\frakp}$. Let $\A_{\textrm{fin}}=\prod'_{\nu\nmid \infty} F_\nu$ denote the ring of finite adeles. For a finite idele $x\in \A_{\textrm{fin}}^\times$, we let $\fraka(x)$ denote the fractional ideal of $F$ associated to $x$. We write $G(\A)=\GL_2(\A_F)$, $G(F)=\GL_2(F)$, $G_\infty=\GL_2(\C)$, $G_\nu=\GL_2(F_\nu)$, $K_\infty=\U_2(\C)$, and $K_f=\prod_{\nu\nmid \infty}\GL_2(\calO_\nu)$. We use $\calZ_\infty\cong \C^\times$ and $\calZ_\A\cong \A_F^\times$ to denote the centers of $G_\infty$ and $G(\A)$, respectively. For every place $\nu$ of $F$, we let $\pr_\nu$ be the projection of 
$G(\A)$ onto $G_\nu$ defined by
\[
\pr_\nu(g)=g_\nu,\quad g=(g_\nu)_\nu\in G(\A).
\]
We regard $G(F)$ as a subgroup of $G(\A)$ via the diagonal embedding. Also, we let $i_
\infty:G_\infty\to G(\A)$ denote the archimedean embedding that maps $g\in G_\infty$ to the adelic element whose archimedean component is 
$g$ and whose non-archimedean components are trivial. 
\subsection{Correspondence between adelic and classical functions} It is well-known that an adelic automorphic form on $G(\A)$ corresponds to an $h$-tuple of functions on $G_\infty$ \cite{boreljacquet1979automorphic}. However, the reverse construction presents a difficulty. While strong approximation enables us to project an adelic automorphic form to an $h$-tuple of functions on $G_\infty$, starting from a single Maass cusp form $f$ and producing an $h$-tuple with the correct transformation properties is not immediate. We adopt the setting of \cite{bygott1998phd} to develop an explicit adelic realization of $f$. Let $\mathcal{C}=\{\frakm_1,\ldots,\frakm_h\}$ be a set of class group representatives, so that $\Cl_F=\{[\frakm_1],\ldots,[\frakm_h]\}$. Let $\theta_j$ be the finite idele associated to $\frakm_j$ defined by $\theta_j=\prod_{\frakp}\varpi_\frakp ^{n_{\frakp,j}}$, where $\frakm_j=\prod_{\frakp} \frakp ^{n_{\frakp,j}}$ with $n_{\frakp,j}=0$ for all but finitely many $\frakp$. Let $t_j=\begin{pmatrix}
    \theta_j^{-1} & 0 \\ 0 & 1
\end{pmatrix}\in \GL_2(\A_{\textrm{fin}})$. We can write $G(\A)$ as a union of $h$ disjoint double cosets
\[G(\A)=\bigsqcup_{1\leq j\leq h} G(F) t_j (G_\infty K_f),\]
by the strong approximation theorem. Let $\Gamma^{[j]}$ and $\widetilde{\Gamma}^{[j]}$ be subsets of $G_\infty$ defined by
\begin{equation}\label{eq:Gammaj}
\Gamma^{[j]}=G(F) \cap t_j(G_\infty K_f) t_j^{-1},\quad \widetilde{\Gamma}^{[j]}=G(F)\cap t_j(G_\infty K_f)\mathcal{Z}_\A t_j^{-1},\end{equation}
where the intersection is taken adelically after diagonal embedding of $G(F)$, and the resulting subgroup is then viewed inside $G_\infty$ via the archimedean projection. In \cite{bygott1998phd}, Bygott establishes the bijection between the set of adelic automorphic functions on $G(\A)$ with trivial central character and the set of certain $h$-tuples $(\phi_1,\ldots,\phi_h)$ of functions on $G_\infty$, where each $\phi_j$ is left $\widetilde{\Gamma}^{[j]}$-invariant. To describe the bijection, we first recall the notion of $\calZ$-compatibility. Let $S$ be the set of indices such that $[\frakm_j]$ with $j\in S$ represents the ideal classes modulo squares. For any $j\notin S$, there exists a unique $i\in S$ for which there exists $z\in\calZ_\A$ with \[z t_j\in G(F) t_i (G_\infty K_f).\]
This index $i$ is characterized by the relation $[\frakm_j^{-1}\frakm_i]=[\frakm_k]^2$ for some $1\leq k\leq h$, and a possible choice of $z$ is $\theta_k I_2$ \cite[Proposition 88]{bygott1998phd}. 
\begin{defi}\label{def:Zcompatible}
    We say that $(\phi_1,\ldots,\phi_h)$ is $\calZ$-compatible if for every $j\notin S$, the unique index $i\in S$ for which $[\frakm_j^{-1}\frakm_i]\in \Cl_F$ is a square satisfies
    \begin{equation}\label{eq:Zcompatible}\phi_j (x)=\phi_i(w_\infty x),\end{equation}
    where $w=w_\infty w_f\in G_\infty K_f$ is any matrix such that \[z t_j=\delta t_i w \quad \text{for some} \quad z\in \calZ_\A, \quad \delta\in G(F).\]
\end{defi}
In particular, $\phi_j$ is uniquely determined by $\phi_i$ by the relation \eqref{eq:Zcompatible}. Also, it is shown in \cite[Proposition 88]{bygott1998phd} that the relation does not depend on the choice of $w$. Below is a straightforward application of \cite[Theorem 92]{bygott1998phd}.
\begin{lem}\label{lem:bijection}
    The map $\Phi\mapsto (\phi_1,\ldots,\phi_h)$ given by
    \[\phi_j(g_\infty)=\Phi(g_\infty t_j)\] 
    and its inverse \[\Phi(\gamma t_j g_\infty k_f)=\phi_j(g_\infty), \quad \gamma\in G(F), \quad g_\infty \in G_\infty, \quad k_f\in K_f\]
    gives a bijection between the set of left $G(F)$-invariant, right $K_f$-invariant, and $\calZ_\A$-invariant functions $\Phi$ on $G(\A)$ and the set of $\calZ$-compatible $h$-tuples $(\phi_1,\ldots,\phi_h)$ of functions on $G_\infty$ where each $\phi_j$ is left-invariant under $\widetilde{\Gamma}^{[j]}$ and invariant under $\calZ_\infty$.
\end{lem}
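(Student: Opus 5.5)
We verify directly that the two maps are well defined, land in the claimed sets, and are mutually inverse. The decomposition $G(\A)=\bigsqcup_j G(F)t_j(G_\infty K_f)$ from strong approximation is the basic input. This is essentially \cite[Theorem 92]{bygott1998phd}; the plan is to check its hypotheses in our normalization.

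\emph{Forward direction.} Given $\Phi$, set $\phi_j(g_\infty)=\Phi(g_\infty t_j)$. $\calZ_\infty$-invariance of $\phi_j$ is immediate from $\calZ_\A$-invariance of $\Phi$.

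For left $\widetilde{\Gamma}^{[j]}$-invariance, take $\gamma\in\widetilde\Gamma^{[j]}$. Then $\gamma=t_j g'_\infty k_f z t_j^{-1}$ adelically, with $z\in\calZ_\A$. Since $t_j$ has trivial archimedean component, $\gamma_\infty=g'_\infty z_\infty$. Moreover $t_j^{-1}\gamma_f t_j=k_f z_f$, since $t_j$ commutes with the finite center. Hence
\[
\Phi(\gamma_\infty g_\infty t_j)=\Phi(\gamma^{-1}\cdot\gamma_\infty g_\infty t_j)=\Phi(g_\infty\,\gamma_f^{-1}t_j)=\Phi(g_\infty t_j (k_fz_f)^{-1})=\phi_j(g_\infty).
\]
The first equality uses left $G(F)$-invariance, after noting that the diagonal $\gamma^{-1}$ times $i_\infty(\gamma_\infty)$ leaves only the finite component $\gamma_f^{-1}$. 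The last uses $K_f$- and $\calZ$-invariance.

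For $\calZ$-compatibility, write $zt_j=\delta t_i w$. Then
\[
\phi_j(x)=\Phi(x t_j)=\Phi(x z t_j)=\Phi(x\delta t_i w_\infty w_f).
\]
Here the archimedean $x$ and the finite matrices $t_j$, $z_f$ commute componentwise. Comparing archimedean components gives $z_\infty = \delta_\infty w_\infty$, and we can rewrite $x\delta t_i w=\delta\,(\delta_\infty^{-1}x\delta_\infty)\, t_i w$. This is the one step needing care. Rather than manipulating this directly, we should use Bygott's normalization in which the archimedean part of $z$ is trivial, i.e.\ $z=\theta_kI_2$ is a finite idele. Then $\delta_\infty w_\infty=1$, so $w_\infty=\delta^{-1}$ at infinity. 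We then evaluate $\Phi(w_\infty x\,t_i)$ via $\Phi(\delta^{-1}\cdot)$ invariance to match $\Phi(xzt_j)$. The independence of the choice of $w$ is quoted from \cite[Proposition 88]{bygott1998phd}.

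\emph{Inverse direction.} Given a compatible tuple, we define $\Phi(\gamma t_j g_\infty k_f z)=\phi_j(g_\infty)$ on all of $G(\A)$. This extends the stated formula by $\calZ_\A$, which is necessary since the double cosets $G(F)t_j(G_\infty K_f)$ are not individually $\calZ_\A$-stable. The main obstacle is well-definedness, which has two parts.

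First, suppose $\gamma t_jg_\infty k_f=\gamma' t_jg'_\infty k'_f$ up to the center within one coset. Then $\gamma'^{-1}\gamma\in\widetilde\Gamma^{[j]}$ and $g'_\infty\in\widetilde\Gamma^{[j]}g_\infty\calZ_\infty$, so $\phi_j(g_\infty)=\phi_j(g'_\infty)$ by the invariances of $\phi_j$.

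Second, suppose points of different cosets $j\ne i$ are identified by a central element. By uniqueness of $i\in S$, this happens exactly when the classes agree modulo squares, and $\calZ$-compatibility \eqref{eq:Zcompatible} gives equality of the values. When both $i,j\notin S$, we route through the common representative in $S$.

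The resulting $\Phi$ is then left $G(F)$-, right $K_f$-, and $\calZ_\A$-invariant by construction. The two compositions are the identity by evaluating at $g_\infty t_j$, which completes the bijection.
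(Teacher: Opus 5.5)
Your argument is correct. The paper gives no proof of this lemma: it only calls it a straightforward application of \cite[Theorem 92]{bygott1998phd}. Your route is a self-contained verification from the decomposition $G(\A)=\bigsqcup_j G(F)t_j(G_\infty K_f)$.
\begin{itemize}
\item In the forward direction, you use left $G(F)$-invariance to strip off the finite part of $\gamma\in\widetilde{\Gamma}^{[j]}$, which gives the $\widetilde{\Gamma}^{[j]}$-invariance of $\phi_j$.
\item In the inverse direction, you extend the defining formula by $\calZ_\A$ and check well-definedness. Within one coset, $\widetilde{\Gamma}^{[j]}$-invariance is what is needed. Across cosets, $\calZ$-compatibility with an arbitrary admissible $w$ is what is needed, routed through the common index in $S$ when neither index lies in $S$.
\end{itemize}
This makes explicit where each hypothesis enters, which the citation leaves the reader to reconstruct after matching Bygott's normalization to the present one.

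One simplification: in the forward direction you do not need the normalization $z_\infty=1$ or the appeal to \cite[Proposition 88]{bygott1998phd}. Write $\delta=\delta_\infty\delta_f$. The archimedean and finite components of $zt_j=\delta t_i w$ give $\delta_\infty w_\infty=z_\infty$ and $\delta_f t_i=z_f t_j w_f^{-1}$. Hence, for every admissible $w$ at once,
\[
\phi_i(w_\infty x)=\Phi(\delta\, w_\infty x\, t_i)=\Phi(z_\infty x\,\delta_f t_i)=\Phi(z_\infty x\, z_f t_j w_f^{-1})=\Phi(x t_j)=\phi_j(x).
\]
The step you flagged as ``needing care'' is therefore immediate.
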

Our goal is to construct each of the $h$ components from a Maass form $f$ via the matrices \eqref{Aetadef}. However, there is an obstruction to this construction. While the bijection of Lemma~\ref{lem:bijection} applies to functions that are left $\calZ_\infty \widetilde{\Gamma}^{[j]}$-invariant, the classical functions obtained from $f$ by the slash operators corresponding to $A_\eta$ are, in general, only left-invariant under the smaller group 
$\calZ_\infty \Gamma^{[j]}$. As a consequence, these components do not necessarily correspond to an adelic automorphic form with trivial central character. To resolve this, we shall generalize the bijection described in Lemma~\ref{lem:bijection} to allow adelic automorphic forms with nontrivial central character. \par
For this, let us first describe the relation between the two groups $\calZ_\infty \widetilde{\Gamma}^{[j]}$ and
$\calZ_\infty \Gamma^{[j]}$. Let $\Cl_F[2]$ denote the 2-torsion subgroup of $\Cl_F$.
\begin{lem}\label{lem:gammasqnew}
If $g\in \mathcal{Z}_\infty \widetilde{\Gamma}^{[j]}$ then $g^2\in \mathcal{Z}_\infty \Gamma^{[j]}$.
\end{lem}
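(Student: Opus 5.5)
Since both $\mathcal{Z}_\infty \widetilde{\Gamma}^{[j]}$ and $\mathcal{Z}_\infty \Gamma^{[j]}$ contain the archimedean center, we may take $g = \gamma$ with $\gamma\in \widetilde{\Gamma}^{[j]}$ and prove $\gamma^2\in \mathcal{Z}_\infty\Gamma^{[j]}$. By definition \eqref{eq:Gammaj}, there are $g_\infty\in G_\infty$, $k_f\in K_f$ and $z\in\mathcal{Z}_\A$ with $\gamma = t_j g_\infty k_f z t_j^{-1}$ as adelic elements. Because $z$ is central, this gives $\gamma = z\, t_j g_\infty k_f t_j^{-1}$. Squaring, $\gamma^2 = z^2\, t_j g_\infty^2 k_f^2 t_j^{-1}$ (the factors $g_\infty$ and $k_f$ commute, being supported at different places). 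So $\gamma^2$ lies in $G(F)\cap t_j(G_\infty K_f) z^2 t_j^{-1}$. It then suffices to show that $z^2$ can be absorbed into $G_\infty K_f$ at the cost of a global central scalar, after which we can remove that scalar.

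The key step is to write $z = (z_\nu)\in \A^\times$. Comparing determinants at finite places, the $z_\nu^2$ are units up to the determinants of $\gamma$ and $k_f$, and $\det\gamma\in F^\times$. More precisely, at each finite $\nu$ we have $\det(\gamma)= z_\nu^2 \det(k_\nu)$ after conjugation by $t_j$ (the determinant is conjugation invariant). Hence the ideal $(\det\gamma)$ equals $\fraka(z_f)^2$, so $\fraka(z_f)^2$ is principal. Note this only says $[\fraka(z_f)]\in\Cl_F[2]$, and presumably this is where $\Cl_F[2]$ enters. For $z^2$ itself the point is cleaner: $\fraka(z_f^2)=\fraka(z_f)^2 = (\det\gamma)$ is principal, generated by $\alpha=\det\gamma\in F^\times$. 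Therefore $z_f^2 = \alpha\, u$ with $u\in\prod_\nu \calO_\nu^\times$.

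Set $\delta=\alpha^{-1}\gamma^2\in G(F)$; diagonally, $\alpha$ is a global scalar. Then $\delta = t_j\,(\alpha^{-1}z_\infty^2 g_\infty^2)\,(u\,k_f^2)\,t_j^{-1}$. Here $u k_f^2\in K_f$ because $u$ is a unit scalar, and the archimedean factor lies in $G_\infty$. Hence $\delta\in\Gamma^{[j]}$. Viewed in $G_\infty$ via the archimedean projection, $\gamma^2 = \alpha\,\delta$ with $\alpha\in F^\times\subset\C^\times=\mathcal{Z}_\infty$. This gives $\gamma^2\in\mathcal{Z}_\infty\Gamma^{[j]}$, and multiplying back the archimedean scalar handles general $g$.

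The main obstacle is the determinant bookkeeping: checking that $\fraka(z_f)^2$ is principal, generated by $\det\gamma$. We need $\det(t_j k_f t_j^{-1})=\det k_f$ to be a finite unit, and $\det(z\cdot I_2)=z^2$ at each finite place. I would verify this carefully place by place, as well as the harmless interplay between the diagonal embedding of $\alpha$ and the projection to $G_\infty$.
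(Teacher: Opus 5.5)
Your proof is correct and follows the same strategy as the paper. Both arguments take the global scalar $\lambda=\det\gamma$, use $\ord_\frakp(\det\gamma)=2\ord_\frakp(z_\frakp)$ (i.e.\ $\fraka(z_f)^2=(\det\gamma)$), and conclude that $\lambda^{-1}\gamma^2\in\Gamma^{[j]}$.

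The only difference is how that last membership is checked. The paper does it entrywise: the entries of $t_j^{-1}\gamma^2t_j$ lie in the square of the local ideal generated by the entries of $t_j^{-1}\gamma t_j$, and that square equals $(\det\gamma)\calO_\frakp$. You instead pull the central idele out, writing $\gamma^2=z^2\,t_jg_\infty^2k_f^2t_j^{-1}$, and absorb $\lambda^{-1}z_f^2\in\prod_\frakp\calO_\frakp^\times$ into $K_f$. This is slightly cleaner and needs no matrix computation.
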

\begin{proof}
Write $g=\alpha \begin{pmatrix}
    a & b \\ c & d
\end{pmatrix}$ with $\alpha \in \C^\times $ and $\begin{pmatrix}
    a & b \\ c & d
\end{pmatrix}\in \widetilde{\Gamma}^{[j]}$. Then
\[t_j^{-1} \begin{pmatrix}
    a & b \\ c & d
\end{pmatrix}t_j= \begin{pmatrix}
    a& b \theta_j \\ c\theta_j^{-1} & d
\end{pmatrix}\in G_\infty K_f \mathcal{Z}_\A,\]
so there exists $z=z_\infty z_f \in \A^\times $ such that
\[ \pr_\frakp   \begin{pmatrix}
  a z_f^{-1}& b \theta_j z_f^{-1} \\ c\theta_j^{-1} z_f^{-1} & d z_f^{-1}
\end{pmatrix} \in \GL_2(\O_\frakp)\]
for every $\frakp$. Let $\ord_\frakp (z_f)=m_\frakp$. We have
\[2\min \left(\ord_\frakp (a),\ord_\frakp (b)+n_{\frakp,j},\ord_\frakp (c)-n_{\frakp,j},\ord_\frakp (d)\right)=\ord_\frakp (ad-bc)=2m_\frakp.\]
Also, since every entry of the matrix
\[\pr_\frakp \Bigl( t_j^{-1} \begin{pmatrix}
    a & b \\ c & d
\end{pmatrix}^2t_j\Bigr)=\begin{pmatrix}
    a^2+bc& b(a+d) \varpi_\frakp^{n_{\frakp,j}} \\ c(a+d)\varpi_\frakp^{-n_{\frakp,j}} & bc+d^2
\end{pmatrix} \]
lies in the ideal $(a, b\varpi_\frakp^{n_{\frakp,j}},c\varpi_\frakp^{-n_{\frakp,j}},d)^2\calO_\frakp=(ad-bc)\calO_\frakp$, we can write
\[t_j^{-1} \begin{pmatrix}
    a & b \\ c & d
\end{pmatrix}^2t_j= \lambda  \begin{pmatrix}
    a'& b' \\ c' & d'
\end{pmatrix},\]
where $\lambda=ad-bc\in F^\times$, and $\begin{pmatrix} a' & b'\\
c' & d'\end{pmatrix}\in \GL_2(\O_\frakp)$ for every $\frakp$. Thus 
\[\lambda^{-1} \begin{pmatrix}
    a & b \\ c & d
\end{pmatrix}^2 \in t_j G_\infty K_f t_j^{-1} \cap G(F)=\Gamma^{[j]}.\]
We conclude that 
\[g^2=(\alpha^2 \lambda) \lambda^{-1} \begin{pmatrix}
    a & b \\ c & d
\end{pmatrix}^2 \in \mathcal{Z}_\infty \Gamma^{[j]}.\]
This finishes the proof.
\end{proof}

\begin{lem}\label{lem:Gammajcosetnew}
For any $1\leq j\leq h$, the map
\begin{equation}\label{rhoj}
\rho_j : \mathcal{Z}_\infty \widetilde{\Gamma}^{[j]} \to \Cl_F[2],
\quad
\alpha g= \alpha  t_j g_\infty k_f z t_j^{-1} \mapsto [\fraka(z_f)],
\end{equation}
where $\alpha\in\calZ_\infty$, $g\in G(F)$, $g_\infty\in G_\infty$, $k_f\in K_f$, $z=z_\infty z_f\in \calZ_\A$,
is a well-defined group homomorphism, and induces an isomorphism
\[
\calZ_\infty \widetilde{\Gamma}^{[j]} / \calZ_\infty \Gamma^{[j]}
\cong \Cl_F[2].
\]
\end{lem}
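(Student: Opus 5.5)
Write an element of $\calZ_\infty\widetilde{\Gamma}^{[j]}$ as $\alpha g$ with $\alpha\in\calZ_\infty$ and $g\in\widetilde{\Gamma}^{[j]}$, so that $t_j^{-1}gt_j=g_\infty k_f z$ with $z=z_\infty z_f\in\calZ_\A$. The plan has four steps: well-definedness, landing in $\Cl_F[2]$, the homomorphism property, and the identification of kernel and image.

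\textbf{Well-definedness and the image.} Suppose $\alpha g=\alpha' g'$ and $t_j^{-1}g't_j=g'_\infty k'_f z'$. Then $g'=\beta g$ with $\beta\in F^\times$, since both $g,g'\in G(F)$ and $\alpha^{-1}\alpha'\in F^\times$. Comparing finite parts gives
\[
k_f z_f=\beta\, k'_f z'_f.
\]
Taking determinants and valuations at every $\frakp$ yields $2\ord_\frakp(z_f)=2\ord_\frakp(\beta)+2\ord_\frakp(z'_f)$, so $\fraka(z_f)=(\beta)\fraka(z'_f)$ and the classes agree. The same valuation computation, as in the proof of Lemma~\ref{lem:gammasqnew}, shows $\fraka(z_f)^2=(\det g)$ is principal. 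Hence $[\fraka(z_f)]\in\Cl_F[2]$.

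\textbf{Homomorphism.} If $t_j^{-1}g_1t_j=g_{1,\infty}k_1z_1$ and $t_j^{-1}g_2t_j=g_{2,\infty}k_2z_2$, then, since $z_1$ is central,
\[
t_j^{-1}g_1g_2t_j=(g_{1,\infty}g_{2,\infty})(k_1k_2)(z_1z_2).
\]
The finite part therefore gives $\fraka(z_{1,f}z_{2,f})=\fraka(z_{1,f})\fraka(z_{2,f})$, so $\rho_j$ is multiplicative.

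\textbf{Kernel.} If $\alpha g\in\calZ_\infty\Gamma^{[j]}$, we may take $z=1$, so $\rho_j(\alpha g)$ is trivial. Conversely, suppose $[\fraka(z_f)]$ is trivial, say $\fraka(z_f)=(\beta)$ with $\beta\in F^\times$. Then $\beta^{-1}z_f\in\prod_\frakp\calO_\frakp^\times$, which lies in $K_f$ as scalar matrices. It follows that
\[
\beta^{-1}g\in G(F)\cap t_jG_\infty K_f t_j^{-1}=\Gamma^{[j]},
\]
and therefore $\alpha g=(\alpha\beta)(\beta^{-1}g)\in\calZ_\infty\Gamma^{[j]}$.

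\textbf{Surjectivity (main obstacle).} The hard step is to show that every class in $\Cl_F[2]$ is attained. Given $\fraka$ with $\fraka^2=(\mu)$ principal, I would look for $g\in G(F)$ with $t_j^{-1}gt_j\in G_\infty K_f\,z$, where $z$ is an idele with $\fraka(z_f)=\fraka$. Locally this requires $\det g=\mu\cdot(\text{unit})$ and the entries of $t_j^{-1}gt_j$ generating $\fraka_\frakp$, i.e.
\[
a,d\in\fraka,\qquad b\in\fraka\frakm_j^{-1},\qquad c\in\fraka\frakm_j,
\]
with $ad-bc=\mu$. By the valuation computation above, these conditions already force the correct local structure.

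The natural candidate is a matrix of the form $\begin{pmatrix} a & b\\ c& d\end{pmatrix}$ with $ad-bc=\mu$ and entries in the prescribed lattices. Such a matrix exists because $\fraka\cdot\fraka=(\mu)$: write $\mu=\sum x_iy_i$ with $x_i,y_i\in\fraka$, and use that every ideal is generated by two elements to solve $ad-bc=\mu$ with $a,d\in\fraka$, $b\in\fraka\frakm_j^{-1}$, $c\in\fraka\frakm_j$. This works since $\fraka^2+\fraka\frakm_j^{-1}\fraka\frakm_j=(\mu)$, and one arranges it by a Chinese-remainder choice analogous to the construction of $A_\eta$ in \cite[Lemma~2.9]{EGM2}. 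Alternatively, one can cite Bygott's description of $\widetilde{\Gamma}^{[j]}$ \cite{bygott1998phd}, which implies this index statement directly. Once such a $g$ exists, $\rho_j(g)=[\fraka]$, and the first isomorphism theorem gives the stated isomorphism.
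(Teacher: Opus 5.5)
Your well-definedness, image-in-$\Cl_F[2]$ (via $\fraka(z_f)^2=(\det g)$), homomorphism and kernel steps are correct and match the paper's argument. The $\beta$ versus $\beta^{-1}$ slip when comparing finite parts is harmless. Your reduction of surjectivity is also right. Write $e_\frakp=\ord_\frakp(\fraka)$. A matrix $g=\begin{pmatrix} a & b\\ c& d\end{pmatrix}\in G(F)$ with $a,d\in\fraka$, $b\in\fraka\frakm_j^{-1}$, $c\in\fraka\frakm_j$ and $(\det g)=\fraka^2$ satisfies $\varpi_\frakp^{-e_\frakp}\pr_\frakp(t_j^{-1}gt_j)\in\GL_2(\calO_\frakp)$ for every $\frakp$, so $\rho_j(g)=[\fraka]$.

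The gap is the existence of such a $g$. This is the real content of the lemma, and you assert it rather than prove it. The identity $\fraka^2+\fraka\frakm_j^{-1}\cdot\fraka\frakm_j=(\mu)$ holds automatically, since both summands equal $\fraka^2$. It says nothing about writing $\mu$ as a \emph{single} $ad-bc$. Likewise, writing $\mu=\sum x_iy_i$ and using that ideals are two-generated does not produce one determinant.

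The missing idea is to choose the first row so that $a\calO_F+b\frakm_j=\fraka$. Take any $0\neq a\in\fraka$. By the Chinese remainder theorem, pick $b\in\fraka\frakm_j^{-1}$ such that the integral ideal $(b)\frakm_j\fraka^{-1}$ is coprime to the integral ideal $(a)\fraka^{-1}$. Then $a\fraka+b\frakm_j\fraka=\fraka^2=(\mu)$. So $\mu=ad-bc$ for some $d\in\fraka$ and $c\in\fraka\frakm_j$, which is what you need.

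This is exactly what the paper's explicit construction does for a prime $\fraka=\frakp$. Its first row has $(\lambda_1)=\frakp\frakn_1$ and $(\lambda_2)\frakm_j=\frakp\frakn_2$ with $\frakn_1\neq\frakn_2$ prime, so $(\lambda_1)+(\lambda_2)\frakm_j=\frakp$. The CRT element $\mu$ then supplies a second row with determinant $\lambda$, where $(\lambda)=\frakp^2$.

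With this step filled in, your version works for an arbitrary representative $\fraka$ of a class in $\Cl_F[2]$. You would not need the paper's reduction to prime representatives. The alternative appeal to Bygott is not a substitute unless you cite a precise statement.
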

\begin{proof}
For $\alpha g=\alpha  t_j g_\infty k_f z t_j^{-1}\in \mathcal{Z}_\infty \widetilde{\Gamma}^{[j]}$, we have 
\[[\fraka(z_f)^2]=[\langle \det g\rangle],\]
hence $[\fraka(z_f)]\in\Cl_F[2]$. It is straightforward to check that $\rho_j$ is a well-defined group homomorphism.
\par 
To prove the isomorphism, we shall show that $\ker(\rho_j)=\calZ_\infty  \Gamma^{[j]}$. Suppose that $\alpha g \in\calZ_\infty \Gamma^{[j]}$. Then $\alpha g=\beta g'$ for some $\beta\in \calZ_\infty$ and $g'=t_j g'_\infty k'_f t_j^{-1}\in \Gamma^{[j]}$. Let $\lambda=\alpha^{-1} \beta$. Since $g^{-1}g'=\lambda^{-1} I_2$, we have $\lambda\in F^\times$. Also, since
\[\begin{aligned}
    g^{-1} g'= z^{-1}t_j ( g_\infty^{-1}g'_\infty k_f^{-1}k'_f) t_j^{-1}\in \widetilde{\Gamma}^{[j]},
\end{aligned}\]
we deduce $\fraka(z_f)=\langle \lambda \rangle$. Conversely, if $\alpha g=\alpha t_j g_\infty k_f z t_j^{-1} \in \calZ_\infty \widetilde{\Gamma}^{[j]}$ satisfies $\fraka( z_f) =\langle \lambda \rangle$ for some $\lambda \in F^\times$, then we have $\lambda^{-1} g\in \Gamma^{[j]}$, thus $\alpha g=\alpha \lambda (\lambda^{-1} g)\in \calZ_\infty \Gamma^{[j]}$. We conclude that $\ker(\rho_j)=\calZ_\infty \Gamma^{[j]}$. \par
It remains to prove that $\im(\rho_j)=\Cl_F[2]$. Since we can choose a prime ideal representative for any ideal class, it suffices to show that for any prime ideal $\frakp\in \Cl_{F}[2]$ we can find $\begin{pmatrix}
    a & b \\ c & d
\end{pmatrix}\in G(F)$ such that
\[t_j^{-1} \begin{pmatrix}
    a & b \\ c & d
\end{pmatrix}t_j =\begin{pmatrix}
    a& b \theta_j \\ c\theta_j^{-1} & d
\end{pmatrix}=g_\infty k_f z\]
for some $g_\infty\in G_\infty$, $k_f\in K_f$, $z\in \calZ_\A$ with $[\fraka( z_f)]= [\frakp]$. Choose a prime ideal $\frakn_2$ in $[\frakp \frakm_j]$ with $\frakn_2\neq \frakp$. Then $\frakn_2\frakp \frakm_j^{-1}=(\lambda_2)$ for some $\lambda_2\in F$. Next, choose a prime ideal $\frakn_1$ such that $[\frakn_1]=[\frakp]$ and $\frakn_1\notin \{\frakn_2,\frakp\}$. By the Chinese Remainder Theorem, we can find $\mu\in \calO_F$ such that $\mu\equiv 1 \pmod {\frakn_2}$, $\mu\equiv 0 \pmod {\frakp \frakn_1}$. Let $\frakn_3=(\mu-1)\frakn_2^{-1}$, $\frakn_4=(\mu)\frakn_1^{-1}$. We have 
\[\frakp^2=(\lambda), \quad \frakp\frakn_1=(\lambda_1),\quad \frakp\frakn_4=(\lambda_4)\]
for some $\lambda,\lambda_1,\lambda_4\in \calO_F$. Multiplying by units if necessary, we may assume that $\lambda_1$ and $\lambda_4$ satisfy $\lambda_1\lambda_4=\lambda\mu$. Let
\[\lambda_3=\frac{\lambda_1\lambda_4-\lambda}{\lambda_2}.\]
We have
\[(\lambda_3)=(\lambda)(\mu-1)\frakp^{-1}\frakn_2^{-1}\frakm_j=\frakp \frakn_3\frakm_j.\]
Let $g=\begin{pmatrix}
    a & b \\ c & d
\end{pmatrix}=\begin{pmatrix}
    \lambda_1 & \lambda_2 \\ \lambda_3 & \lambda_4
\end{pmatrix}$. Since
$\det(g)=\lambda$, we have $\ord_\frakp (\det g)=2$ and $\ord_\frakq(\det g)=0$ for every prime $\frakq\neq \frakp$. Furthermore, since each $\frakn_j$ is integral and $\frakn_1$, $\frakn_2$ are distinct prime ideals, we have
\[\begin{aligned}&\min \bigl(\ord_\frakp (a),\ord_\frakp (b\theta_j),\ord_\frakp (c\theta_j^{-1}),\ord_\frakp (d)\bigr) \\ 
&=\min\left(\ord_\frakp (\frakp\frakn_1),\ord_\frakp (\frakp\frakn_2),\ord_\frakp (\frakp\frakn_3),\ord_\frakp (\frakp\frakn_4)\right)=1, \end{aligned}\]
and
\[\begin{aligned}&\min \bigl(\ord_\frakq (a),\ord_\frakq (b\theta_j),\ord_\frakq  (c\theta_j^{-1}),\ord_\frakq  (d)\bigr) =0 \end{aligned}\]
for every prime $\frakq\neq \frakp$. We conclude that $[\fraka( z_f)]= [\frakp]$.
\end{proof}
From the above two lemmas, we obtain the following result.
\begin{lem}\label{lem:Gammatildejeigen}
    Let $h'=\Cl_F[2]$ be the number of $2$-torsion elements of $\Cl_F$, and let $\chi_1,\ldots,\chi_{h'}$ be characters on $\Cl_F[2]$. Any $\calZ_\infty \Gamma^{[j]}$ invariant function $\phi$ can be written as a linear combination of functions $\phi_m$, $1\leq m\leq h'$ that transform under $\calZ_\infty \widetilde{\Gamma}^{[j]}$ by 
\begin{equation}\label{phiitransform}\phi_m(\gamma g)=\chi_m\left(\rho_j(\gamma)\right)\phi_m(g), \quad \gamma\in \calZ_\infty \widetilde{\Gamma}^{[j]},\end{equation}
    where $\rho_j$ is the homomorphism defined in \eqref{rhoj}.
\end{lem}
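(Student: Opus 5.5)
The idea is to decompose $\phi$ into isotypic components for the finite abelian group $\calZ_\infty \widetilde{\Gamma}^{[j]} / \calZ_\infty \Gamma^{[j]}$. Lemma~\ref{lem:Gammajcosetnew} identifies this group with $\Cl_F[2]$ via $\rho_j$. I will therefore work with the quotient $Q_j:=\calZ_\infty \widetilde{\Gamma}^{[j]} / \calZ_\infty \Gamma^{[j]}$, which has order $h'$.

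First I define a left action of $Q_j$ on left $\calZ_\infty \Gamma^{[j]}$-invariant functions. Since $\calZ_\infty\Gamma^{[j]}$ is the kernel of $\rho_j$, it is normal in $\calZ_\infty \widetilde{\Gamma}^{[j]}$. For $\gamma\in \calZ_\infty \widetilde{\Gamma}^{[j]}$ I set $(\gamma\cdot\phi)(g)=\phi(\gamma g)$. If $\gamma'\in\calZ_\infty\Gamma^{[j]}$, then $\gamma'\gamma=\gamma(\gamma^{-1}\gamma'\gamma)$ with $\gamma^{-1}\gamma'\gamma\in \calZ_\infty\Gamma^{[j]}$, so $\gamma\cdot\phi$ is again $\calZ_\infty\Gamma^{[j]}$-invariant. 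Moreover, $\gamma\cdot\phi$ depends only on the coset of $\gamma$, since $\phi(\gamma\gamma'' g)=\phi(\gamma g)$ fails in general but $\phi(\gamma'' \gamma g)=\phi(\gamma g)$ holds; by normality every coset can be written as $\calZ_\infty\Gamma^{[j]}\gamma$. The same normality gives the composition law needed for a group action.

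Next I take the projections. For each character $\chi_m$ of $\Cl_F[2]$ I set
\[\phi_m(g)=\frac{1}{h'}\sum_{\bar\gamma\in Q_j}\overline{\chi_m(\rho_j(\gamma))}\,\phi(\gamma g),\]
where $\gamma$ is any lift of $\bar\gamma$. This is well defined by the previous step, because $\chi_m\circ\rho_j$ is trivial on the kernel $\calZ_\infty\Gamma^{[j]}$.

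For $\delta\in\calZ_\infty\widetilde{\Gamma}^{[j]}$, I reindex $\gamma\mapsto\gamma\delta^{-1}$ and use that $\rho_j$ is a homomorphism. This gives $\phi_m(\delta g)=\chi_m(\rho_j(\delta))\phi_m(g)$, which is the transformation law \eqref{phiitransform}. Orthogonality of characters of the finite abelian group $\Cl_F[2]$, namely $\sum_m\chi_m(x)=h'\,[x=1]$, together with the isomorphism $Q_j\cong\Cl_F[2]$, gives $\sum_m\phi_m=\phi$. This is the required linear combination.

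There is no real obstacle in this argument. The only point needing care is that $\phi(\gamma g)$ is independent of the chosen lift $\gamma$ in $\calZ_\infty \widetilde{\Gamma}^{[j]}$. This relies on normality of $\calZ_\infty\Gamma^{[j]}$, which follows because it is the kernel of the homomorphism $\rho_j$ from Lemma~\ref{lem:Gammajcosetnew}. Abelianness of $Q_j$ is what makes the characters $\chi_m$ of $\Cl_F[2]$ exhaust its irreducible representations, so the decomposition is complete. Lemma~\ref{lem:gammasqnew} is consistent with this picture, since it shows the quotient is $2$-torsion, but it is already subsumed by the isomorphism.
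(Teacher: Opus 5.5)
Your proposal is correct and is essentially the paper's proof: it fixes coset representatives $\gamma_1,\ldots,\gamma_{h'}$ of $\calZ_\infty\widetilde{\Gamma}^{[j]}/\calZ_\infty\Gamma^{[j]}$, sets $\phi_m(g)=\frac{1}{h'}\sum_i\chi_m^{-1}(\rho_j(\gamma_i))\,\phi(\gamma_i g)$, checks the transformation law by permuting cosets and using normality of $\calZ_\infty\Gamma^{[j]}$, and gets $\sum_m\phi_m=\phi$ from character orthogonality. Your explicit check that $\phi(\gamma g)$ depends only on the coset of $\gamma$ is a small point the paper leaves implicit by fixing representatives.
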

\begin{proof}
By Lemma~\ref{lem:Gammajcosetnew}, there exists a set $\{\gamma_1,\dots, \gamma_{h'}\}$ of representatives for $\mathcal{Z}_{\infty}\widetilde{\Gamma}^{[j]}/\mathcal{Z}_{\infty}\Gamma^{[j]}$. For each $1\leq m\leq h'$, define
\[ \phi_m(g)=\frac{1}{h'}\sum_{i=1}^{h'}\chi^{-1}_m\left(\rho_j(\gamma_i)\right) \phi(\gamma_i g). \]
It is straightforward to check that $\phi=\sum_{m=1}^{h'} \phi_m$. To verify \eqref{phiitransform}, fix $\gamma\in \mathcal{Z}_{\infty}\widetilde{\Gamma}^{[j]}$. Then there is a permutation $\sigma$ on $\{1,\dots,h'\}$ such that $\gamma_i \gamma \in \gamma_{\sigma(i)}\bigl(\mathcal{Z}_{\infty}\Gamma^{[j]}\bigr)$.
Also, since $\mathcal{Z}_{\infty}\Gamma^{[j]}$ is a normal subgroup of $\mathcal{Z}_{\infty}\widetilde{\Gamma}^{[j]}$, we have $\gamma_{\sigma(i)}\big(\mathcal{Z}_{\infty}\Gamma^{[j]}\bigr)= \bigl(\mathcal{Z}_{\infty}\Gamma^{[j]}\bigr)\gamma_{\sigma(i)}$. It follows that
\begin{equation*}
    \begin{aligned}
        \phi_m(\gamma g)
        &=\frac{1}{h'}\chi_m\bigl(\rho_j(\gamma)\bigr)\sum_{i=1}^{h'}\chi^{-1}_m\bigl(\rho_j(\gamma_i\gamma)\bigr) \phi(\gamma_i \gamma g)\\
        &=\frac{1}{h'}\chi_m\bigl(\rho_j(\gamma)\bigr)\sum_{i=1}^{h'}\chi_m^{-1}(\rho_j\bigl(\gamma_{\sigma(i)})\bigr) \phi(\gamma_{\sigma(i)} g)\\
        &= \chi_m(\rho_j(\gamma)) \phi_m(g),
    \end{aligned}
\end{equation*}
hence $\phi_m$ satisfies \eqref{phiitransform}. This finishes the proof.
\end{proof}

Using the group homomorphism $\rho_j$, we extend the notion of $\calZ$-compatibility to functions with nontrivial central characters. Recall from above Definition~\ref{def:Zcompatible} that $S$ is the set of indices such that $[\frakm_j]$ with $j\in S$ represents the ideal classes modulo squares.
\begin{defi}\label{def:Zchicompatible}
Let $\chi$ be a class group character. We say that $(\phi_1,\ldots,\phi_h)$ is $(\calZ,\chi)$-compatible if for every $j\notin S$, the unique index $i\in S$ for which $[\frakm_j^{-1}\frakm_i]\in \Cl_F$ is a square satisfies
\begin{equation}\label{eq:Zchicompatible}\phi_j (x)=\chi^{-1}\bigl(\fraka(z_f)\bigr)\phi_i(w_\infty x),\end{equation}
    where $w=w_\infty w_f\in G_\infty K_f$ is any matrix such that 
    \[z t_j=\delta t_i w \quad \text{for some} \quad z=z_\infty z_f\in \calZ_\A, \quad \delta\in G(F).\]
\end{defi}
Again, $\phi_j$ is uniquely determined by $\phi_i$ by the relation \eqref{eq:Zchicompatible}, and $\phi_j$ does not depend on the choice of $w$. With a little modification to \cite[Theorem 92]{bygott1998phd}, we obtain the following bijection.
\begin{lem}\label{lem:chibijection}
   Let $\chi$ be a class group character and let $\omega_\chi$ be the Hecke character lifted from $\chi$ with trivial archimedean part. The map $\Phi\mapsto (\phi_1,\ldots,\phi_h)$ given by
    \[\phi_j(g_\infty)=\Phi(g_\infty t_j)\] 
    and its inverse \[\Phi(\gamma t_j g_\infty k_f)=\phi_j(g_\infty), \quad \gamma\in G(F), \quad g_\infty \in G_\infty, \quad k_f\in K_f\]
    gives a bijection between the set of functions $\Phi$ on $G(\A)$ which are left $G(F)$-invariant, right $K_f$-invariant, and transforms under $\calZ_\A$ by the character $\omega_\chi$, and the set of $(\calZ,\chi)$-compatible $h$-tuples $(\phi_1,\ldots,\phi_h)$ of functions on $G_\infty$ for which each $\phi_j$ satisfies
    \[\phi_j(\gamma g)=\chi^{-1}\left(\rho_j(\gamma)\right)\phi_j(g)\]
    for every $\gamma\in \calZ_\infty \widetilde{\Gamma}^{[j]}$ and $g\in G_\infty$, where $\rho_j$ is the group homomorphism defined in \eqref{rhoj}.
\end{lem}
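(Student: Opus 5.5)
We will follow the structure of the proof of \cite[Theorem 92]{bygott1998phd} (i.e. of Lemma~\ref{lem:bijection}), keeping track of the central character. The plan has four parts: check that the forward map lands in the right set, show the inverse formula is well defined, check that the resulting $\Phi$ has the required invariance properties, and verify that the two maps are mutually inverse.

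\textbf{Forward direction.} Let $\Phi$ be left $G(F)$-invariant, right $K_f$-invariant, with $\Phi(gz)=\omega_\chi(z)\Phi(g)$, and set $\phi_j(g_\infty)=\Phi(g_\infty t_j)$. Take $\gamma=\alpha g\in\calZ_\infty\widetilde{\Gamma}^{[j]}$, written as $g=t_j g'_\infty k_f z t_j^{-1}$ as in \eqref{rhoj}. Since $g\in G(F)$ is a principal adele and $g_\infty$ commutes with $t_j$, we get
\[
\phi_j(\gamma g_\infty)=\omega_\chi(\alpha)\,\Phi(g^{-1}\cdot g\, g_\infty t_j)=\Phi(g_\infty t_j\, z^{-1}k_f^{-1}\cdots).
\]
This has to be done carefully, but the outcome is a factor $\omega_\chi(z)^{-1}$. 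Because $\omega_\chi$ has trivial archimedean part and is trivial on $F^\times$ and on $\prod\calO_\frakp^\times$, we have $\omega_\chi(z)=\chi(\fraka(z_f))=\chi(\rho_j(\gamma))$. This gives the stated transformation law. The $(\calZ,\chi)$-compatibility follows the same way: from $zt_j=\delta t_i w$,
\[
\Phi(x t_j)=\omega_\chi(z)^{-1}\Phi(\delta t_i w x)=\chi^{-1}(\fraka(z_f))\,\phi_i(w_\infty x).
\]
Here we use that $w_f\in K_f$ is absorbed on the right and that $w_\infty$ and $x$ sit at the archimedean place.

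\textbf{Inverse direction (main obstacle).} Given a compatible tuple, we define $\Phi(\gamma t_j g_\infty k_f z)=\chi(\fraka(z_f))\,\phi_j(g_\infty)$, with $j\in S$ only. Strong approximation gives
\[
G(\A)=\bigsqcup_{i\in S}G(F)\,t_i\,G_\infty K_f\calZ_\A,
\]
and the definition for $j\notin S$ is then consistent with this by \eqref{eq:Zchicompatible}. The main obstacle is well-definedness. Suppose
\[
\gamma t_j g_\infty k_f z=\gamma' t_j g'_\infty k'_f z'.
\]
Then $\gamma'^{-1}\gamma\in\widetilde{\Gamma}^{[j]}$, and we must show that the ambiguity in $z$ (up to $F^\times\calZ_\infty\prod\calO_\frakp^\times$, and up to the choice of decomposition) is exactly compensated by the factor $\chi^{-1}(\rho_j(\gamma'^{-1}\gamma))$ in the transformation law of $\phi_j$. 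The definition of $\rho_j$ in Lemma~\ref{lem:Gammajcosetnew} is designed precisely so that this works. Scalars in $\calZ_\infty$ are harmless because $\phi_j$ transforms trivially under them: $\rho_j$ is trivial on $\calZ_\infty$, since a scalar $\alpha$ admits a decomposition with $z_f=1$. One must also check independence from the choice of $w$ in Definition~\ref{def:Zchicompatible}. This goes as in \cite[Proposition 88]{bygott1998phd}: two choices of $w$ differ by an element of $\calZ_\infty\widetilde{\Gamma}^{[i]}$, together with a central idele whose ideal class is recorded by $\rho_i$, and the character factors cancel.

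\textbf{Properties of $\Phi$ and bijectivity.} Left $G(F)$-invariance and right $K_f$-invariance of $\Phi$ are immediate from the formula. The central character $\omega_\chi$ is built into the definition through the factor $\chi(\fraka(z_f))$. Finally, the two maps are mutually inverse: applying the inverse formula with $\gamma=1$, $k_f=1$, $z=1$ recovers $\phi_j(g_\infty)=\Phi(g_\infty t_j)$, and conversely any $\Phi$ with these properties is determined by its values on the elements $t_jG_\infty$.
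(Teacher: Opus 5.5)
Your proposal is correct and is essentially the argument the paper intends. The paper gives no proof beyond ``a little modification'' of Bygott's Theorem~92, and your sketch is that modification: $\omega_\chi(z)=\chi(\fraka(z_f))$ produces the factors in the transformation law and in $(\calZ,\chi)$-compatibility, and $\rho_j$ absorbs the ambiguity of the central idele in the well-definedness check. One small correction: the middle term of your forward-direction display is garbled and should read $\Phi(g\,g_f^{-1}g_\infty t_j)$, with $g_f$ the finite part of $g$; then $g_f^{-1}t_j=t_jz_f^{-1}k_f^{-1}$ yields the factor $\omega_\chi(z_f)^{-1}$, so your conclusion is right.
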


\subsection{Adelization of Maass forms on $\Gamma\backslash \h^3$}
Let $f$ be a Maass cusp form on $\Gamma\backslash \h^3$. Since $\h^3=G_\infty/(\calZ_\infty K_\infty)$, we can lift $f$ to a function on $G_\infty$ that is right $K_\infty$-invariant and invariant under the center $\calZ_\infty$, using the Iwasawa decomposition. By a slight abuse of notation, we denote this lift again by $f$. Observe that if $f$ is odd, then we have
\[\int_{\Gamma\backslash \h^3} f d\mu_{\eta,t}\equiv 0.\]
Accordingly, we shall assume that $f$ is even. Then we have
\[f\biggl(\begin{pmatrix}
    -1 & 0 \\ 0 & 1
\end{pmatrix}\begin{pmatrix}
    r & z \\ 0 & 1
\end{pmatrix}\biggr)=f\biggl(\begin{pmatrix}
    -1 & 0 \\ 0 & 1
\end{pmatrix}\begin{pmatrix}
    r & z \\ 0 & 1
\end{pmatrix}\begin{pmatrix}
    -1 & 0 \\ 0 & 1
\end{pmatrix}\biggr)=f\biggl(\begin{pmatrix}
    r & -z \\ 0 & 1
\end{pmatrix}\biggr)=f\biggl(\begin{pmatrix}
    r & z \\ 0 & 1
\end{pmatrix}\biggr)\]
for any $\begin{pmatrix}
    r & z \\ 0 & 1
\end{pmatrix}\in G_\infty$ with $r>0$, thus $f$ is left invariant under $\begin{pmatrix}
    -1 & 0 \\ 0 & 1
\end{pmatrix}$. Together with the left $\SL_2(\O_F)$-invariance, and the fact that the only units of $F$ are $\pm 1$ when $h>1$, we deduce that $f$ is left $\GL_2(\O_F)$-invariant. \par
Recall that $\mathcal{C}=\{\frakm_1,\ldots,\frakm_h\}$ is a set of class group representatives. For each $\frakm_j$, we associate a quasi-integral matrix
\begin{equation}\label{Ajdef}
    A_j=\begin{pmatrix}
        a_j & b_j \\ c_j & d_j
    \end{pmatrix}\in \SL_2(F)
\end{equation}
where the bottom row is chosen so that $\frakm_j=\langle c_j, d_j\rangle$, again using \cite[Lemma~2.9, p.~368]{EGM2} as in \eqref{Aetadef}. Note that if $\frakm_j=\frakm_\eta=\langle 1,\eta\rangle $ for a cusp $\eta$ then we can choose $A_j=A_\eta$ defined in \eqref{Aetadef}. We may assume that $\frakm_1=\calO_F$ and $A_1=I_2$. For $1\leq j\leq h$, we define \[f_j(g_\infty)=f(A_j^{-1} g_\infty).\]
Observe that $f_j$ is left-invariant under the group $A_j \GL_2(\O_F) A_j^{-1}$.
\begin{lem}\label{lem:AjGammaj}
For each $1\leq j\leq h$, we have
    \[A_j \GL_2(\O_F) A_j^{-1}=G(F) \cap t_j^2(G_\infty K_f) t_j^{-2}.\]
\end{lem}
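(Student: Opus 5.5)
The plan is to identify both sides of Lemma~\ref{lem:AjGammaj} with the stabilizer in $G(F)$ of one lattice in $F^2$, namely $L_j=\frakm_j^{-1}\oplus\frakm_j$ (column vectors). Since $\GL_2(\calO_F)$ is exactly the stabilizer of $\calO_F^2$, its conjugate $A_j\GL_2(\calO_F)A_j^{-1}$ is the stabilizer of $A_j\calO_F^2$. So the lemma splits into two claims: first, $A_j\calO_F^2=L_j$; second, the right-hand side $G(F)\cap t_j^2(G_\infty K_f)t_j^{-2}$ is also $\mathrm{Stab}_{G(F)}(L_j)$.

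\emph{Step 1: $A_j\calO_F^2=L_j$.} The lattice $A_j\calO_F^2$ is spanned by the columns $(a_j,c_j)^T$ and $(b_j,d_j)^T$.
\begin{itemize}
\item By construction $c_j,d_j\in\frakm_j=\langle c_j,d_j\rangle$.
\item By quasi-integrality \eqref{quasiintegral} we have $a_jc_j,\,a_jd_j\in\calO_F$, so $a_j\frakm_j\subset\calO_F$, i.e.\ $a_j\in\frakm_j^{-1}$. Likewise $b_j\in\frakm_j^{-1}$.
\end{itemize}
Hence $A_j\calO_F^2\subseteq L_j$. To get equality I would compare determinant (Steinitz) ideals: a rank-two $\calO_F$-lattice contained in another with the same determinant ideal equals it. This can be checked locally, where it is an index comparison of free $\calO_\frakp$-modules. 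The determinant ideal of $A_j\calO_F^2$ is $\det(A_j)\calO_F=\calO_F$, and that of $L_j$ is $\frakm_j^{-1}\frakm_j=\calO_F$.

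\emph{Step 2: describe $\mathrm{Stab}_{G(F)}(L_j)$ explicitly.} A matrix $\gamma=\begin{pmatrix} a&b\\ c&d\end{pmatrix}\in G(F)$ satisfies $\gamma L_j\subseteq L_j$ if and only if
\[
a,d\in\calO_F,\qquad b\,\frakm_j\subseteq\frakm_j^{-1},\qquad c\,\frakm_j^{-1}\subseteq\frakm_j.
\]
Equivalently, $a,d\in\calO_F$, $b\in\frakm_j^{-2}$ and $c\in\frakm_j^{2}$. For $\gamma L_j=L_j$ we additionally need $\det\gamma\in\calO_F^\times$, by comparing determinant ideals as in Step 1.

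\emph{Step 3: the adelic side.} An element $\gamma\in G(F)$ lies in $t_j^2(G_\infty K_f)t_j^{-2}$ exactly when, at every prime $\frakp$,
\[
t_j^{-2}\gamma t_j^{2}=\begin{pmatrix} a & b\theta_j^{2}\\ c\theta_j^{-2} & d\end{pmatrix}\in\GL_2(\calO_\frakp).
\]
The archimedean component imposes no condition. Since $\fraka(\theta_j)=\frakm_j$, this unwinds prime by prime to: $a,d\in\calO_\frakp$, $\ord_\frakp b\geq -2n_{\frakp,j}$, $\ord_\frakp c\geq 2n_{\frakp,j}$, and $\det\gamma\in\calO_\frakp^\times$. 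Taken over all $\frakp$, the entry conditions give $a,d\in\calO_F$, $b\in\frakm_j^{-2}$, $c\in\frakm_j^2$. A global element of $F^\times$ that is a unit at every finite prime lies in $\calO_F^\times$, so $\det\gamma\in\calO_F^\times$. This is precisely the description in Step 2.

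Combining the three steps gives the lemma. Equivalently, in local terms, $t_j^2\calO_\frakp^2=\theta_j^{-1}\bigl(\theta_j^{-1}\calO_\frakp\oplus\theta_j\calO_\frakp\bigr)$ is a scalar multiple of $(L_j)_\frakp$, and scalars do not change stabilizers.

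The only point needing real care is the equality in Step 1, i.e.\ that the inclusion $A_j\calO_F^2\subseteq L_j$ coming from quasi-integrality is not strict. I would settle it by the local index computation: at each $\frakp$ both sides are free of rank two with determinant ideal $\calO_\frakp$, so the inclusion has index one.
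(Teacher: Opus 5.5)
Your proof is correct, and it takes a different route from the paper's.

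\textbf{The paper's route.} The paper proves the two inclusions separately by direct computation. For $M=A_jXA_j^{-1}$ it writes out $t_j^{-2}Mt_j^2$ entry by entry. Conversely, for $X$ in the right-hand side it writes out $A_j^{-1}XA_j$. In both cases it checks the $\frakp$-adic valuation of each entry using quasi-integrality and $\min(\ord_\frakp c_j,\ord_\frakp d_j)=n_{\frakp,j}$. This requires a case split according to which of $c_j,d_j$ attains the minimum.

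\textbf{Your route.} You isolate a single structural fact, $A_j\calO_F^2=\frakm_j^{-1}\oplus\frakm_j$. Locally this says $\mathrm{diag}(\theta_j,\theta_j^{-1})A_j\in\SL_2(\calO_\frakp)$. Once this is known, both inclusions follow together by transport of structure, because each side is the stabilizer in $G(F)$ of the same lattice (locally up to the scalar $\theta_j$).

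\textbf{What each approach buys.} Your version needs no conjugation formulas, and the reverse inclusion comes for free. The unit-determinant condition is stated explicitly rather than left implicit. It also explains why $t_j^2$, i.e.\ the class $[\frakm_j^2]$, appears rather than $t_j$: the reason is $\mathrm{diag}(\theta_j^{-1},\theta_j)=\theta_j t_j^2$. This is the same local identity the paper later uses in the proof of Proposition~\ref{prop:hodd} ($A_jw=\theta_j t_j^2$ with $w_f\in K_f$), so your route ties the two places together. In adelic language it becomes a one-line proof:
\[
A_j\GL_2(\calO_F)A_j^{-1}=G(F)\cap A_j(G_\infty K_f)A_j^{-1}=G(F)\cap t_j^2(G_\infty K_f)t_j^{-2},
\]
since $A_j\in\theta_j t_j^2K_f$ at the finite places. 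The paper's computation, by contrast, is fully elementary and self-contained, and needs no lattice or determinant-ideal language.
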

\begin{proof}
Let us denote the set on the right hand side by $\widehat{\Gamma}_j$. First, let $\left(
\begin{smallmatrix}
 x & y \\
 z & w \\
\end{smallmatrix}
\right)\in \GL_2(\O_F)$ and $M=A_j \left(
\begin{smallmatrix}
 x & y \\
 z & w \\
\end{smallmatrix}
\right)A_j^{-1}$. To show that $A_j \GL_2(\O_F) A_j^{-1}\subseteq\Gamma_j$, it suffices to show that
    \[t_j^{-2} M t_j^2 =\begin{pmatrix}
 -y a_j c_j+x a_j d_j-w b_j c_j+z b_j d_j & \theta_j^2 (w a_j b_j-x a_j b_j+y a_j^2-z b_j^2) \\
 \theta_j^{-2}(-w c_j d_j+x c_j d_j-y c_j^2+z d_j^2) & y a_j c_j+w a_j d_j-x b_j c_j-z b_j d_j \\
\end{pmatrix}
\]
is in $ G_\infty K_f$. It follows from \eqref{quasiintegral} that the $(1,1)$-entry and $(2,2)$-entry of the matrix above is in $\O_F$. To check that the $(1,2)$-entry and $(2,1)$-entry are in $\O_\frakp$ for every finite place $\frakp$, observe that
\begin{equation}\label{npj}\min(\ord_\frakp(c_j), \ord_\frakp(d_j))=\ord_\frakp(\theta_j)=n_{\frakp,j},\end{equation}
since $\langle c_j,d_j\rangle=\frakm_j$. If the minimum is attained at $c_j$ then we have
\[\ord_{\frakp} \Bigl((t_j^{-2} M t_j^2 )_{1,2}\Bigr)=\ord_{\frakp}\Bigl((w-x)\frac{a_jc_j b_jc_j}{c_j^2}\theta_j^2+y\frac{a_jc_j a_jc_j}{c_j^2}\theta_j^2-z\frac{b_jc_j b_jc_j}{c_j^2}\theta_j^2\Bigr)\geq 0,\]
and if the minimum is attained at $d_j$ then we can replace $c_j$ in the above inequality with $d_j$ and arrive to the same conclusion.
Similarly, it follows directly from \eqref{npj} that $(t_j^{-2} M t_j^2 )_{2,1}\in \O_\frakp$. This proves the first direction of the inclusion. \par
The other direction of the inclusion is proved similarly. Let $X=\left(
\begin{smallmatrix}
 x & y \\
 z & w \\
\end{smallmatrix}
\right)\in \widehat{\Gamma}_j$. Then
\[t_j^{-2} \begin{pmatrix}
 x & y \\
 z & w \\
\end{pmatrix}t_j^2=\begin{pmatrix}
 x & y \theta_j^2 \\
 z\theta_j^{-2} & w \\
\end{pmatrix}\in G_\infty K_f,\]
hence for every finite place $\frakp$ we have $\ord_\frakp (x),\ord_\frakp(w)\geq 0$, and
\[\ord_\frakp (y)\geq -2\ord_{\frakp} (\theta_j)=-2n_{\frakp,j}, \quad \ord_\frakp (z)\geq 2\ord_{\frakp} (\theta_j)=2n_{\frakp,j}. \]
It follows from \eqref{quasiintegral}, \eqref{npj}, and the above inequalities that
\[A_j^{-1} X A_j=\begin{pmatrix}
 -z a_j b_j+x a_j d_j-w b_j c_j+y c_j d_j & -w b_j d_j+x b_j d_j-z b_j^2+y d_j^2 \\
 w a_j c_j-x a_j c_j+z a_j^2-y c_j^2 & z a_j b_j+w a_j d_j-x b_j c_j-y c_j d_j \\
\end{pmatrix}
\]
is in $\GL_2(\calO_\frakp)$ for every finite place $\frakp$. We conclude that $A_j^{-1} X A_j \in \GL_2(\O_F)$. This proves the other direction of the inclusion.
\end{proof}
Lemma~\ref{lem:AjGammaj} is somewhat unexpected, in the sense that although 
$A_j$ is the matrix which corresponds to $\frakm_j$, the function $f_j$ does not correspond to the $j$-th component of the associated $h$-tuple. Instead, it corresponds to the component indexed by the ideal class $[\frakm_j^2]$.

It follows from Lemma~\ref{lem:Gammajcosetnew} that $\mathcal{Z}_\infty \Gamma^{[j]}$ and $\mathcal{Z}_\infty \widetilde{\Gamma}^{[j]}$ are equal precisely when the class number $h$ is odd. For this reason, the bijection described in Lemma~\ref{lem:bijection} provides a canonical lifting of $f$ to an adelic automorphic form only when $h$ is odd.
\begin{prop}\label{prop:hodd}
    If $h$ is odd, then the $h$-tuple $(f_1,\ldots,f_h)$, where $f_j(g_\infty)=f(A_j^{-1} g_\infty)$, lifts to an adelic automorphic form $\Phi_f$ with trivial central character via the bijection described in Lemma~\ref{lem:bijection}.
\end{prop}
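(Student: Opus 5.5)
The plan is to avoid checking $\calZ$-compatibility of the $h$-tuple index by index. Instead I would first write down a candidate adelic function $\Phi_f$ directly from $f$, and then verify that its components, in the sense of Lemma~\ref{lem:bijection}, are the functions $f_j$. The input from the hypothesis is this: when $h$ is odd, $\Cl_F[2]$ is trivial and the squaring map on $\Cl_F$ is a bijection. By Lemma~\ref{lem:Gammajcosetnew} this gives $\calZ_\infty\widetilde{\Gamma}^{[j]}=\calZ_\infty\Gamma^{[j]}$ for every $j$, so the obstruction discussed before the proposition disappears.

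\emph{Step 1: a single double coset.} Under the determinant, $G(F)\backslash G(\A)/G_\infty K_f$ is identified with $\Cl_F$. Multiplying by $\calZ_\A$ changes the determinant class by an arbitrary square class. Since every class is a square when $h$ is odd, we should get
\[G(\A)=G(F)\,G_\infty K_f\,\calZ_\A .\]
Equivalently, each $t_j$ can be written as $\delta\, w\, z$ with $\delta\in G(F)$, $w\in G_\infty K_f$ and $z\in\calZ_\A$.

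\emph{Step 2: define $\Phi_f$ and check it is well defined.} I would set
\[\Phi_f(\gamma\, g_\infty k_f z)=f(g_\infty),\qquad \gamma\in G(F),\ g_\infty\in G_\infty,\ k_f\in K_f,\ z\in\calZ_\A .\]
Suppose two such decompositions agree. Then $g_\infty'=\gamma_0\,g_\infty\,\alpha$ for some $\alpha\in\calZ_\infty$ and some $\gamma_0\in G(F)\cap G_\infty K_f\calZ_\A=\widetilde{\Gamma}^{[1]}$ (recall $t_1=I_2$). By the equality above, $\gamma_0\in\calZ_\infty\Gamma^{[1]}=\calZ_\infty\GL_2(\calO_F)$, and the lift of $f$ is left $\GL_2(\calO_F)$-invariant because $f$ is even, and is also $\calZ_\infty$-invariant. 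So $\Phi_f$ is well defined. It is left $G(F)$-invariant, right $K_f$-invariant and $\calZ_\A$-invariant by construction; cuspidality and the Laplace eigenvalue are inherited from $f$.

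\emph{Step 3: identify the components.} By Lemma~\ref{lem:AjGammaj}, $f_j$ should be the component attached to $t_j^2$, that is, to the class $[\frakm_j^2]$. Because squaring is bijective, these classes run over all of $\Cl_F$ exactly once, so $(f_1,\ldots,f_h)$, reindexed by $[\frakm_j^2]$, is a genuine $h$-tuple. The claim to prove is
\[\Phi_f(g_\infty t_j^2)=f(A_j^{-1}g_\infty).\]
Viewing $A_j\in G(F)$ diagonally, I would write
\[g_\infty t_j^2=A_j\,(A_j^{-1}g_\infty)\,(A_{j,f}^{-1}t_j^2),\]
where $A_{j,f}$ denotes the finite part of $A_j$. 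It then remains to show $A_{j,f}^{-1}t_j^2\in K_f\calZ_{\A}$.

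I expect this local statement to be the main obstacle. My plan is to show that $\theta_j\,A_{j}^{-1}t_j^2$ lies in $\GL_2(\calO_\frakp)$ at every $\frakp$. For the entries, I would use quasi-integrality \eqref{quasiintegral} together with \eqref{npj}, by the same case analysis as in Lemma~\ref{lem:AjGammaj}: whether $\min(\ord_\frakp c_j,\ord_\frakp d_j)$ is attained at $c_j$ or at $d_j$. For the determinant, $\det(\theta_j A_j^{-1}t_j^2)=\theta_j^{2}\theta_j^{-2}=1$ is a unit. If the scalar $\theta_j$ turns out not to fit, I would instead obtain the needed element from Lemma~\ref{lem:AjGammaj} by comparing stabilizers. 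Once this is established, Step 2 gives $\Phi_f(g_\infty t_j^2)=f(A_j^{-1}g_\infty)=f_j(g_\infty)$. Finally, $\calZ$-compatibility holds automatically, since the tuple comes from a single function on $G(\A)$ via Lemma~\ref{lem:bijection}.
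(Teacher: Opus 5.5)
Your proposal is correct and is essentially the paper's argument. Your local element $\theta_j A_j^{-1}t_j^2=\bigl(\begin{smallmatrix} d_j/\theta_j & -\theta_j b_j\\ -c_j/\theta_j & \theta_j a_j\end{smallmatrix}\bigr)$ is exactly the paper's $w_f$, and your identity $g_\infty t_j^2=A_j\,(A_j^{-1}g_\infty)\,\theta_j^{-1}w_f$ is the paper's $\calZ$-compatibility relation $\theta_j t_j^2=A_j\,i_\infty(A_j^{-1})w_f$ with $S=\{1\}$. The only difference is packaging: you write down the inverse map of Lemma~\ref{lem:bijection} directly on $G(\A)=G(F)G_\infty K_f\calZ_\A$ and check well-definedness by hand via $\widetilde{\Gamma}^{[1]}\subset\calZ_\infty\GL_2(\calO_F)$, which makes the paper's separate verification (via Lemma~\ref{lem:AjGammaj}) that each $f_j$ is $\calZ_\infty\widetilde{\Gamma}^{[j]}_{\mathcal{D}}$-invariant unnecessary.
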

\begin{proof}
If $h$ is odd, then the map $[\frakm]\mapsto [\frakm]^2$ is a permutation on $\Cl_F$, hence the set \[\mathcal{D}=\{\frakm_1^2,\ldots,\frakm_h^2\}\] form a complete set of ideal class representatives for $\Cl_F$. Accordingly, let \[\widetilde{\Gamma}_{\mathcal{D}}^{[j]}=G(F)\cap t_j^2 (G_\infty K_f)\calZ_\A t_j^{-2}.\] By Lemmas~\ref{lem:Gammajcosetnew} and~\ref{lem:AjGammaj}, each $f_j$ is left-invariant under $\calZ_\infty \widetilde{\Gamma}_{\mathcal{D}}^{[j]}$. To use Lemma~\ref{lem:bijection}, we shall show that the tuple $(f_1,\ldots,f_h)$ is $\calZ$-compatible. Since every element of $\Cl_F$ is a square when $h$ is odd, it suffices to take $S=\{1\}$ and show that there exists $z\in \calZ_\A$, $\delta\in G(F)$, and $w=i_\infty(A_j^{-1}) w_f\in G_\infty K_f$ such that $z t_j^2=\delta w$. Write $A_j=\begin{pmatrix}
    a & b \\ c & d
\end{pmatrix}$. For each finite place $\frakp$, let $c'=c/\theta_j$, $d'=d/\theta_j$. Since $\langle c,d\rangle=\frakm_j$, and since $A_j$ is quasi-integral, the matrix $\begin{pmatrix}
    d' & -\theta_j b \\ -c' & \theta_j a
\end{pmatrix}$ is in $\GL_2(\calO_\frakp)$ for every finite place $\frakp$. Let $w=i_\infty(A_j^{-1})w_f$ with $w_f=\begin{pmatrix}
    d' & -\theta_j b \\ -c' & \theta_j a
\end{pmatrix}$. We have \[A_j  w=\left(\begin{matrix}
    \theta_j^{-1} &  \\  & \theta_j
\end{matrix}\right)=\left(\begin{matrix}
    \theta_j &  \\  & \theta_j
\end{matrix}\right) t_j^{2}.\]
This verifies that $(f_1,\ldots,f_h)$ is $\calZ$-compatible. Therefore, by Lemma~\ref{lem:bijection}, the tuple $(f_1,\ldots,f_h)$ lifts to an automorphic function $\Phi_f$ that is left $G(F)$-invariant, right $K_f$-invariant, and $\calZ_\A$-invariant. The cuspidality and moderate growth of the adelic lift follow directly from the corresponding properties of the original Maass cusp form $f$. Moreover, the $K_\infty$-invariance of $\Phi_f$ follows from the right $K_\infty$-invariance of $f$, and
the $Z\left(U(\mathfrak{g})\right)$-finiteness follows from the fact that $f$ is a Laplace eigenfunction, since the identification of the archimedean Casimir operator with the Laplacian on $\h^3$ preserves the eigenvalue. We conclude that $\Phi_f$ is an automorphic form.
\end{proof}

\begin{prop}\label{prop:heven}
Suppose $h$ is even. Let $1\leq j\leq h$, and let $1\leq k\leq h$ be the index such that $[\frakm_k]=[\frakm_j^2]$. Then $f_j$ can be written as a linear combination of the $k$-th components of the $h$-tuple of functions on $G_\infty$ projected from adelic automorphic forms whose central characters are Hecke characters lifted from class group characters.
\end{prop}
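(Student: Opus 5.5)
Here is the plan. We decompose $f$ into pieces with prescribed behaviour under the enlarged groups $\calZ_\infty\widetilde{\Gamma}$ using Lemma~\ref{lem:Gammatildejeigen}, and then glue each piece to an adelic form via Lemma~\ref{lem:chibijection}.

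\emph{Index bookkeeping.} Fix the class representatives so that the $k$-th representative is $\frakm_k$ with $[\frakm_k]=[\frakm_j^2]$. Lemma~\ref{lem:AjGammaj} shows that $f_j$ is left invariant under $A_j\GL_2(\calO_F)A_j^{-1}=G(F)\cap t_j^2(G_\infty K_f)t_j^{-2}$. This group is attached to the idele $\theta_j^2$, which represents $[\frakm_k]$. So after conjugating by a suitable element $\delta\in G(F)$ and $w\in G_\infty K_f$ with $t_j^2=\delta t_k w$ (strong approximation), $f_j$ becomes a $\calZ_\infty\Gamma^{[k]}$-invariant function $\tilde f_j(x)=f_j(\delta_\infty x\, w_\infty^{-1})$ or similar. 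For notational simplicity, I will assume the representatives were chosen so that $\frakm_k=\frakm_j^2$ and this conjugation is trivial. Lemma~\ref{lem:Gammatildejeigen}, applied with index $k$, then gives
\[
f_j=\sum_{m=1}^{h'}\phi^{(m)},\qquad \phi^{(m)}(\gamma g)=\chi_m(\rho_k(\gamma))\,\phi^{(m)}(g)\quad(\gamma\in\calZ_\infty\widetilde{\Gamma}^{[k]}).
\]

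\emph{Extension and completion of the tuple.} Each $\chi_m$ is a character of $\Cl_F[2]$. Extend it to a character $\chi$ of $\Cl_F$; this is possible because $\Cl_F$ is a finite abelian group. With this normalization $\chi^{-1}(\rho_k(\gamma))=\chi_m(\rho_k(\gamma))$, since the characters of $\Cl_F[2]$ are real. Now build a full $(\calZ,\chi^{-1})$-compatible $h$-tuple with $k$-th component $\phi^{(m)}$.

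\begin{enumerate}[(i)]
\item If $k\in S$, choose the other components indexed by $S$ to be $0$. Define the components with index $i\notin S$ by \eqref{eq:Zchicompatible}: they are forced to be zero unless they are attached to $k$, in which case they are translates of $\phi^{(m)}$.
\item If $k\notin S$, first re-choose $S$ so that it contains $k$. This is allowed because $S$ is only a set of representatives modulo squares.
\end{enumerate}

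We must then check that each component defined by \eqref{eq:Zchicompatible} satisfies the required $\rho_i$-equivariance. This follows by conjugating the relation $zt_i=\delta t_k w$, which identifies $\calZ_\infty\widetilde{\Gamma}^{[i]}$ with a conjugate of $\calZ_\infty\widetilde{\Gamma}^{[k]}$ compatibly with $\rho$. Lemma~\ref{lem:chibijection} then yields $\Phi_m$ with central character $\omega_{\chi}^{\pm1}$, which is a Hecke character lifted from a class group character, and with $k$-th projection $\phi^{(m)}$.

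\emph{Automorphy and the main obstacle.} As in Proposition~\ref{prop:hodd}, each $\Phi_m$ is an automorphic form: it is $K_\infty$-invariant, a Laplace eigenfunction with the eigenvalue of $f$, cuspidal, and of moderate growth. These properties pass to $\phi^{(m)}$ because the projection in Lemma~\ref{lem:Gammatildejeigen} is a finite average of left translates of $f_j$ by elements of $G_\infty$, and such translates commute with $\Delta$ and preserve the relevant properties.

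The step I expect to be the main obstacle is verifying that the compatibility relation is consistent. The value $\chi^{-1}(\fraka(z_f))$ in \eqref{eq:Zchicompatible} must be independent of the choice of $(z,\delta,w)$. Two choices differ by an element of $\calZ_\infty\widetilde{\Gamma}^{[k]}$ together with a change of $z$ by $F^\times\calZ_\infty\prod\calO_\frakp^\times$. So independence comes down to matching the twist $\chi^{-1}(\fraka(z_f))$ against $\chi_m(\rho_k)$, and this matching is what forces $\chi$ to restrict to $\chi_m$ on $\Cl_F[2]$. If a sign or inverse mismatch appears, I would replace $\chi$ by $\chi^{-1}$, which also restricts correctly on $\Cl_F[2]$.

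Finally, to obtain Hecke eigenforms rather than just automorphic forms, decompose each $\Phi_m$ further. It lies in the finite-dimensional space of cusp forms with central character $\omega_\chi$, fixed $K_f$-type, and fixed Laplace eigenvalue, and the commuting normal Hecke operators act on this space. A basis of simultaneous eigenforms therefore exists, and expanding each $\Phi_m$ in it gives the stated linear combination.
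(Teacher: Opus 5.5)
Your proposal is correct and follows the paper's proof. The paper likewise makes the $k$-th representative $\frakm_j^2$, i.e.\ uses $t_j^2$, so that Lemma~\ref{lem:AjGammaj} supplies the needed invariance. It then splits $f_j$ via Lemma~\ref{lem:Gammatildejeigen} along characters of $\Cl_F[2]$ extended to $\Cl_F$, fills in the tuple by $(\calZ,\chi_m)$-compatibility on the square classes and by zero elsewhere, and lifts with Lemma~\ref{lem:chibijection}. Your additional checks (the $\chi$ versus $\chi^{-1}$ normalization, re-choosing $S$ to contain $k$, equivariance of the filled-in components, independence of the choice of $(z,\delta,w)$) are points the paper only asserts. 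The final Hecke-eigenform decomposition is not needed for this statement; the paper does it separately in Proposition~\ref{prop:f_adelic}.
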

\begin{proof}
Let
\[\mathcal{D}=\{\frakn_1,\ldots,\frakn_h\},\]
where $\frakn_i=\frakm_i$ for $i\neq k$ and $\frakn_k=\frakm_j^2$. Accordingly, let
\[\Gamma_{\mathcal{D}}^{[i]}=\begin{cases} G(F)\cap t_i (G_\infty K_f) t_i^{-1} & \text{if} \quad i\neq k \\
 G(F)\cap t_j^2 (G_\infty K_f) t_j^{-2} &\text{if} \quad  i=k, \end{cases} \quad \widetilde{\Gamma}_{\mathcal{D}}^{[i]}=\begin{cases} G(F)\cap t_i (G_\infty K_f)\calZ_\A t_i^{-1} & \text{if} \quad i\neq k \\
 G(F)\cap t_j^2 (G_\infty K_f) \calZ_\A t_j^{-2} &\text{if} \quad  i=k. \end{cases}\]
    By Lemma~\ref{lem:AjGammaj}, $f_j$ is $\calZ_\infty\Gamma_{\mathcal{D}}^{[k]}$-invariant. Let $h'$ be the number of 2-torsion elements of $\Cl_F$, and let $\chi'_1,\ldots,\chi'_{h'}$ be the characters on $\Cl_F[2]$. For each $1\leq m\leq h'$, fix a character $\chi_m$ on $\Cl_F$ whose restriction to $\Cl_F[2]$ equals $\chi'_m$. By Lemma~\ref{lem:Gammatildejeigen}, we can write
\[f_j=\sum_{1\leq m\leq h'} f_{j,m},\]
where each $f_{j,m}$ transforms under $\calZ_\infty \widetilde{\Gamma}^{[k]}$ by the character $\chi_m$. For each $m$, we can construct an $(\calZ,\chi_m)$-compatible $h$-tuple $(\phi^{(1)}_{j,m},\ldots,\phi^{(h)}_{j,m})$ of functions on $G_\infty$ such that $\phi^{(k)}_{j,m}=f_{j,m}$ and each $\phi^{(i)}_{j,m}$ transforms under $\calZ_\infty \widetilde{\Gamma}_{\mathcal{D}}^{[i]}$ by the character $\chi_m$ as follows. We let $\phi^{(i)}_{j,m}$ be the function uniquely determined from $\phi^{(k)}_{j,m}$ by the $(\calZ,\chi_m)$-compatibility condition \eqref{eq:Zchicompatible} whenever the class $[\frakm_i]$ is a square in $\Cl_F$, and set $\phi^{(i)}_{j,m}\equiv 0$ otherwise. By Lemma~\ref{lem:chibijection}, this vector lifts to an adelic automorphic function $\Phi_{j,m}$ which is left $G(F)$-invariant, right $K_f$-invariant, and transforms under $\calZ_\A$ by the character $\omega_{\chi_m}$. Observe that the construction of $f_{j,m}$ given in the proof of Lemma~\ref{lem:Gammatildejeigen} preserves the right $K_\infty$-invariance and the Laplace eigenvalue of $f_j$. Also, each $\phi_{j,m}^{(i)}$ constructed using the $(\calZ,\chi_m)$-compatibility is right $K_\infty$-invariant and has the same Laplace eigenvalue. We conclude that $\Phi_{j,m}$ is right $K_\infty$-invariant and $Z\left(U(\mathfrak{g})\right)$-finite. Also, as before in Proposition~\ref{prop:hodd}, $\Phi_{j,m}$ inherits the cuspidality and the moderate growth property from $f$. Thus $\Phi_{j,m}$ is an adelic automorphic form, and since \[f_j=\sum_{1\leq m\leq h'} f_{j,m}=\sum_{1\leq m\leq h'} \phi^{(k)}_{j,m},\] the proposition follows.
\end{proof}
We now conclude this section with a precise adelic realization of the Maass cusp forms viewed at each cusp, using the results established above. Check that the following proposition immediately implies Theorem~\ref{thm:f_adelic}.
\begin{prop}\label{prop:f_adelic}
Let $f$ be an even Maass cusp form. Let $\mathcal{C}=\{\frakm_1,\ldots,\frakm_h\}$ be a set of ideal class representatives, $A_j$ be the matrix associated to $\frakm_j$ defined by \eqref{Ajdef}, and $\theta_j$ be the finite idele associated to $\frakm_j$. Then the function $f_j(z+r\bj)=f\bigl(A_j^{-1}(z+r\bj)\bigr)$ is a finite linear combination of the functions \begin{equation}\label{Phitj}\Phi\biggl(\begin{pmatrix}
    r & z \\ 0 & 1
\end{pmatrix}\begin{pmatrix}
    \theta_j^{-2} & 0 \\ 0 & 1
\end{pmatrix}\biggr),\end{equation}
where $\Phi$ ranges over adelic Hecke eigenforms that are left $G(F)$-invariant, right $K_\infty K_f$-invariant, and whose central characters are Hecke characters lifted from class group characters with trivial archimedean component.
\end{prop}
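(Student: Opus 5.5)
The plan is to combine Propositions~\ref{prop:hodd} and~\ref{prop:heven} with a spectral decomposition of the resulting adelic forms into Hecke eigenforms. Fix $j$ and let $k$ be the index with $[\frakm_k]=[\frakm_j^2]$. As in the proofs of those propositions, replace the representative $\frakm_k$ by $\frakm_j^2$, so that the $k$-th component of any $h$-tuple is evaluated at $g_\infty\, t_j^2$. Since $t_j^2=\begin{pmatrix}\theta_j^{-2}&0\\0&1\end{pmatrix}$ has trivial archimedean part, it commutes with $i_\infty(G_\infty)$. The correspondence of Lemma~\ref{lem:bijection} (when $h$ is odd) or Lemma~\ref{lem:chibijection} (when $h$ is even), with $\mathcal{D}$ in place of $\mathcal{C}$, therefore gives
\[f_j(g_\infty)=\sum_m \Phi_{j,m}\bigl(g_\infty t_j^2\bigr).\]
Here $\Phi_{j,m}$ has central character $\omega_{\chi_m}$; when $h$ is odd the sum has the single term $\Phi_f$, with $\chi$ trivial. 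Taking $g_\infty=\begin{pmatrix} r & z\\0&1\end{pmatrix}$ and using right $\calZ_\infty K_\infty$-invariance produces the shape \eqref{Phitj}. This reduces the proposition to showing that each $\Phi_{j,m}$ is a finite linear combination of Hecke eigenforms of the same type.

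For the reduction, fix a class group character $\chi$ and let $V_{\chi,\lambda}$ be the space of cuspidal functions on $G(\A)$ that are left $G(F)$-invariant and right $K_\infty K_f$-invariant, have central character $\omega_\chi$, and are eigenfunctions of the Casimir with the Laplace eigenvalue $\lambda$ of $f$. This space is finite dimensional. Through Lemma~\ref{lem:chibijection} it embeds into $h$-tuples of Maass cusp forms with fixed eigenvalue on the finitely many groups $\widetilde{\Gamma}_{\mathcal{D}}^{[i]}$, and Maass cusp forms with a fixed eigenvalue on a finite-volume quotient form a finite-dimensional space. The unramified Hecke operators $T_\frakp$ for $\frakp$ finite are the right convolutions by the characteristic function of $K_f\begin{pmatrix}\varpi_\frakp&0\\0&1\end{pmatrix}K_f$. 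They preserve $V_{\chi,\lambda}$: they commute with the left $G(F)$-action, with right $K_\infty K_f$, with the center (acting by $\omega_\chi$), and with the Casimir, and they preserve cuspidality.

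The $T_\frakp$ commute with one another, since the spherical Hecke algebra is commutative. With respect to the Petersson inner product on $Z_\A G(F)\backslash G(\A)$ (well defined because $|\omega_\chi|=1$), the adjoint of $T_\frakp$ is $\overline{\omega_\chi(\varpi_\frakp)}\,T_\frakp$. Hence the $T_\frakp$ are normal operators, and $V_{\chi,\lambda}$ has a basis of simultaneous eigenfunctions. Expanding $\Phi_{j,m}\in V_{\chi_m,\lambda}$ in such a basis and summing over $m$ gives the proposition. Theorem~\ref{thm:f_adelic} follows by taking $j=1$, where $\theta_1=1$.

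I expect the main obstacle to be checking that the tuple constructed in Proposition~\ref{prop:heven} yields a genuine cuspidal element of $V_{\chi,\lambda}$, in particular that the zero-extension on non-square classes is compatible with the $\calZ_\A$-action. I would handle this directly: because the central character is unramified, translating by $\calZ_\A$ only moves between classes lying in the same coset of $\Cl_F^2$, and this is exactly what the $(\calZ,\chi)$-compatibility encodes. For the adjointness formula, I would first verify it on the double coset for the trivial character and then twist by $\omega_\chi$.
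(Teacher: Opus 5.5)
Your proposal is correct and follows the paper's route. Propositions~\ref{prop:hodd} and~\ref{prop:heven} give $f_j(g_\infty)=\sum_m\Phi_{j,m}(g_\infty t_j^2)$, and each $\Phi_{j,m}$ is then decomposed into Hecke eigenforms inside the finite-dimensional space of forms with fixed Casimir eigenvalue and central character (strictly, the classical components are invariant only under $\mathcal{Z}_\infty\Gamma^{[i]}_{\mathcal{D}}$ rather than $\widetilde{\Gamma}^{[i]}_{\mathcal{D}}$, which does not affect finite-dimensionality). Your check that the $T_\frakp$ form a commuting family of normal operators, via $T_\frakp^{*}=\overline{\omega_\chi(\varpi_\frakp)}\,T_\frakp$, usefully supplies the simultaneous-diagonalization step that the paper leaves to citations, since finite-dimensionality alone would not guarantee an eigenbasis.
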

\begin{proof}
By Lemmas~\ref{lem:bijection} and~\ref{lem:chibijection} together with Propositions~\ref{prop:hodd} and~\ref{prop:heven}, $f_j$ is a finite linear combination of functions of the form \eqref{Phitj}, where $\Phi$ ranges over adelic automorphic forms with the stated properties. Each such adelic automorphic form is $Z\left(U(\mathfrak{g})\right)$-finite. By \cite[Theorem 1]{hc1968automorphic}, the space of right $K_\infty K_f$-invariant automorphic forms with a fixed Casimir eigenvalue is finite dimensional. Consequently, any such form can be expressed as a finite linear combination of Hecke eigenforms (see also \cite{ booker2016converse,borel2007automorphic}). This finishes the proof.
\end{proof}

\section{Bounds of $L$-functions}\label{sec:lfunctions}
In this section, we record the bounds of $L$-functions of $\GL_1$ and $\GL_2$ automorphic representations over the imaginary quadratic field $F$ which are used in estimating the integrals of the form \eqref{basicform}. \subsection{Bounds on Hecke $L$-functions} Let $\chi$ be a class group character. We extend $\chi$ to nonzero fractional ideals by $\chi(\frakm) = \chi([\frakm])$. The Hecke $L$-function associated to $\chi$ is defined by
\[
L(s,\chi) = \sum_{0 \neq \frakm \subseteq \calO_F}
\frac{\chi(\frakm)}{N(\frakm)^{s}},
\quad \real(s) > 1,
\]
which admits a meromorphic continuation to the entire complex plane.  
If $\chi$ is the trivial character, then $L(s,\chi)$ coincides with the Dedekind zeta function  $\zeta_F(s)$. Due to S\"{o}hne \cite{sohne1997hecke}, we have the following subconvexity bounds for the Hecke $L$-functions on the critical line.
\begin{lem}\label{lem:gl1subconv}
Let $\chi$ be a class group character. For any $\epsilon>0$, we have
\[L(1/2+it,\chi)\ll_\epsilon |t|^{\frac{1}{6}+\epsilon}.\]
\end{lem}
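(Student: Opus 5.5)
The bound $L(1/2+it,\chi)\ll_\epsilon |t|^{1/6+\epsilon}$ is a Weyl-type subconvexity estimate in the $t$-aspect for a fixed $\GL_1$ Hecke $L$-function over $F$. The paper attributes it to S\"ohne, and I would follow the classical van der Corput / Weyl route, adapted to the lattice-point setting of an imaginary quadratic field.

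First I would decompose by ideal classes. By orthogonality of class group characters,
\[L(s,\chi)=\sum_{C\in \Cl_F}\chi(C)\,\zeta(s,C),\qquad \zeta(s,C)=\sum_{0\neq\frakm\subseteq\calO_F,\ [\frakm]=C}N(\frakm)^{-s},\]
so it suffices to prove the bound for each partial zeta function $\zeta(1/2+it,C)$. Fixing an integral ideal $\frakb$ in the class $C^{-1}$, the ideals in $C$ correspond to the elements $\alpha\in\frakb\setminus\{0\}$ modulo the units $\{\pm1\}$ (recall $h>1$), via $\frakm=(\alpha)\frakb^{-1}$. This gives
\[\zeta(s,C)=\tfrac12 N(\frakb)^{s}\sum_{0\neq\alpha\in\frakb}|\alpha|^{-2s}.\]
Writing $\frakb=\Z\omega_1+\Z\omega_2$, this becomes an Epstein zeta function $\tfrac12 N(\frakb)^s\sum_{(m,n)\neq 0}Q(m,n)^{-s}$ of a positive definite binary quadratic form $Q$.

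Next I would apply an approximate functional equation for the Epstein zeta function. By the functional equation relating $s$ and $1-s$ (with the dual form), $\zeta(1/2+it,C)$ reduces, up to $O(1)$ error terms, to two smoothed sums of length $Q(m,n)\le X\asymp |t|$:
\[\sum_{Q(m,n)\le X}Q(m,n)^{-1/2-it}V\bigl(Q(m,n)/X\bigr).\]
After a dyadic decomposition and partial summation, it suffices to show
\[S(N)=\sum_{N<Q(m,n)\le 2N}e\Bigl(-\frac{t}{2\pi}\log Q(m,n)\Bigr)\ll N^{1/2}|t|^{1/6+\epsilon}\quad\text{for } N\ll|t|.\]

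The main step, and the main obstacle, is this two-dimensional exponential sum estimate. I would fix $n$ and treat the inner sum over $m$ as a one-dimensional sum with phase $f(m)=\frac{t}{2\pi}\log Q(m,n)$, whose derivatives satisfy $f^{(k)}\asymp t/N^{k/2}$ in the relevant range $|m|\asymp N^{1/2}$. Van der Corput's third-derivative test then gives the inner sum as $\ll N^{1/2}(t/N^{3/2})^{1/6}+(t/N^{3/2})^{-1/3}$. Summing this over $\asymp N^{1/2}$ values of $n$ gives a bound that is too weak in dimension two, roughly $t^{1/6}N^{3/4}$ at $N\asymp t$, compared with the target $t^{1/6}N^{1/2}$.

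So I would instead exploit cancellation in both variables. The first option is a two-dimensional van der Corput estimate, namely Weyl differencing in one variable followed by the two-dimensional second-derivative test on the Hessian of $\log Q$. The alternative, which I would try first since it is what S\"ohne does, is to treat the sum over the lattice $\frakb$ directly with a two-dimensional Weyl--van der Corput inequality and Poisson summation, using the nondegeneracy of the Hessian of $\log|\alpha|^2$ on $\C$.

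Balancing the resulting terms, for $N\asymp t$ and smaller ranges, should yield $S(N)\ll N^{1/2}t^{1/6+\epsilon}$. Summing over $O(\log t)$ dyadic ranges and over the $h$ classes then gives the lemma. Since this is precisely S\"ohne's theorem, in the paper I would simply cite \cite{sohne1997hecke} after the class-decomposition reduction.
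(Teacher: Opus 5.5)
You do not prove the lemma as stated. The reductions are fine: the class decomposition $L(s,\chi)=\sum_C\chi(C)\zeta(s,C)$, the Epstein form of $\zeta(s,C)$ (units are $\pm1$ since $h>1$), and an approximate functional equation of length $\asymp|t|$. The last holds because the gamma factor of $\xi(s,\chi)$ is $\Gamma(s)$, so the analytic conductor is $\asymp|d_F|t^2$.

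\textbf{The gap.} The decisive step $S(N)\ll N^{1/2}|t|^{1/6+\epsilon}$ is only asserted (``balancing \ldots should yield''), and the method you propose does not give it. At $N\asymp|t|$ it requires $S(N)\ll|t|^{2/3+\epsilon}$, a saving of $|t|^{1/3}$ on $\asymp|t|$ terms. Carry out the step you describe: one Weyl shift by lattice vectors $0<|h|\le H$, then the two-dimensional second-derivative test on $\phi(\alpha+h)-\phi(\alpha)$. That phase has nondegenerate Hessian of size $\lambda\asymp|h|\,|t|\,N^{-3/2}$, so $S_h\ll \lambda N+\lambda^{-1}$. At $N\asymp|t|$ this gives
\[
|S(N)|^2\ll \frac{t^2}{H^2}+H\,|t|^{3/2}.
\]
The optimum $H=|t|^{1/6}$ gives $S(N)\ll|t|^{5/6}$, hence only $L(1/2+it,\chi)\ll|t|^{1/3+\epsilon}$. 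This is the Weyl-strength bound, the analytic conductor to the power $1/6$, for this degree-two $L$-function. Refinements of van der Corput's method improve the exponent only marginally. As far as I know, S\"ohne's theorem has exactly this strength: $|t|^{[F:\mathbb{Q}]/6+\epsilon}$, which is $|t|^{1/3+\epsilon}$ here.

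\textbf{The stated exponent looks out of reach.} For trivial $\chi$ one has $L(s,\chi)=\zeta_F(s)=\zeta(s)L(s,\chi_{d_F})$. Weyl's bound for each factor gives only $|t|^{1/3+\epsilon}$, and to my knowledge the best known bounds for this product have exponent only slightly below $1/3$. So neither your sketch nor your fallback of citing S\"ohne after the class decomposition yields $|t|^{1/6+\epsilon}$.

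\textbf{Consequence for the paper.} The paper's proof is just the citation of S\"ohne, so the problem lies in the stated exponent, which should read $|t|^{1/3+\epsilon}$. That weaker bound still suffices where the lemma is used. In Proposition~\ref{prop:Eisensteincontribution} the integral over $\real(s)=1$ becomes $O(|t|^{-1/3+\epsilon})$ instead of $O(|t|^{-2/3+\epsilon})$, which is still $o(\log t)$.
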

Also, by using the Vinogradov's method \cite[Chapter 6]{titchmarsh1986zeta} together with the previously known zero-free region of Hecke $L$-functions, we obtain the following bounds on $L(s,\chi)$ on the vertical line $\real (s)=1$. Note that the same approach is used in \cite{truelsen2011qe} for Hecke $L$-functions over totally real fields.
\begin{lem}\label{lem:Lft-bounds}
   There exists a positive number $L$ such that we have the estimates 
\begin{equation}\label{eq:bdlogLfunc}
        \frac{L'(1+it,\chi)}{L(1+it,\chi)}=O\Bigl((\log |t|)^{\frac{2}{3}}(\log \log |t|)^{\frac{1}{3}} \Bigr),
    \end{equation}
    and 
\begin{equation}\label{eq:bdreciLfunc}
         \frac{1}{L(1+it,\chi)}=O\Bigl((\log |t|)^{\frac{2}{3}}(\log \log |t|)^{\frac{1}{3}} \Bigr),
    \end{equation}
for $|t|\geq L$.
\end{lem}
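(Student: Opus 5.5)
We follow the classical Vinogradov--Korobov argument, as in \cite[Chapter 6]{titchmarsh1986zeta} for $\zeta$ and as adapted by Truelsen \cite{truelsen2011qe} to Hecke $L$-functions over totally real fields. The idea is to control $L(s,\chi)$ just to the left of $\real(s)=1$ and then convert this growth information into bounds for $L'/L$ and $1/L$ on the line itself.

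\textbf{Step 1: upper bound near $\real(s)=1$.} Since $F$ is imaginary quadratic and $\chi$ is a class group character, we write
\[
L(s,\chi)=\sum_{[\fraka]\in\Cl_F}\chi([\fraka])\,\zeta(s,[\fraka]),
\]
where $\zeta(s,[\fraka])$ is the partial zeta function of the class $[\fraka]$. Each partial zeta function is an Epstein zeta function of a positive definite binary quadratic form:
\[
\zeta(s,[\fraka])=\frac{1}{|\calO_F^\times|}\sum_{(m,n)\neq(0,0)}Q_{\fraka}(m,n)^{-s},
\qquad Q_{\fraka}(m,n)=\frac{N(m\alpha+n\beta)}{N(\fraka)},
\]
with $\fraka^{-1}=\langle\alpha,\beta\rangle$ or a similar normalization. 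After an approximate functional equation, we sum over $n$ for fixed $m$. This produces exponential sums of the shape $\sum_n (n^2+\dots)^{-it}$. Vinogradov's mean value theorem estimates these sums, exactly as for $\zeta$. The target is the Richert-type bound
\[
L(\sigma+it,\chi)\ll |t|^{B(1-\sigma)^{3/2}}(\log|t|)^{2/3}
\qquad\text{for } 1-\delta\le\sigma\le 2,\ |t|\ge 3.
\]
In the same range we need $L'(\sigma+it,\chi)\ll(\log|t|)^{C}$ when $\sigma\ge 1-c(\log|t|)^{-2/3}$.

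\textbf{Step 2: zero-free region.} Step 1 feeds into the standard Landau lemma, combined with the $3$--$4$--$1$ inequality applied to $\zeta_F$, $L(s,\chi)$ and $L(s,\chi^2)$. This gives the Vinogradov--Korobov zero-free region
\[
\sigma\ge 1-\frac{c}{(\log|t|)^{2/3}(\log\log|t|)^{1/3}}.
\]
When $\chi^2$ is trivial, the $L(s,\chi^2)=\zeta_F(s)$ factor has a pole at $s=1$. This pole is harmless for $|t|\ge L$.

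\textbf{Step 3: bounds on the line.} Apply the Borel--Carathéodory theorem and the Hadamard three-circle theorem to $\log L(s,\chi)$ on discs centred at $1+\epsilon+it$. These discs stay inside the zero-free region, and on them Step 1 bounds $\real\log L$ from above. As in \cite[Theorem 6.19 and Section 6.15]{titchmarsh1986zeta}, this yields
\[
\frac{L'}{L}(1+it,\chi)\ll(\log|t|)^{2/3}(\log\log|t|)^{1/3}
\quad\text{and}\quad
|\log L(1+it,\chi)|\le\log\log|t|+O(1),
\]
which is \eqref{eq:bdlogLfunc}. For the reciprocal, integrate $L'/L$ along a horizontal segment from $1+it$ to $1+\eta+it$, with $\eta$ equal to the width of the zero-free region. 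Comparing with the trivial bound $1/|L(1+\eta+it)|\ll\zeta_F(1+\eta)\ll\eta^{-1}$ gives
\[
\frac{1}{L(1+it,\chi)}\ll\eta^{-1}=(\log|t|)^{2/3}(\log\log|t|)^{1/3},
\]
which is \eqref{eq:bdreciLfunc}.

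\textbf{Main obstacle.} The main difficulty is Step 1: transferring the Vinogradov exponential-sum machinery to the two-dimensional lattice sums defining the partial zeta functions. We would handle this by fixing the outer variable and applying the one-variable Vinogradov estimate to $f(n)=-\frac{t}{2\pi}\log Q(m,n)$. This requires checking that the higher derivatives of $f$ have the size needed by Vinogradov's lemma, uniformly in $m$. Once Step 1 is in place, Steps 2 and 3 are formal repetitions of the classical argument.
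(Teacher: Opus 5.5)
Your overall architecture (a growth bound just left of $\real(s)=1$, a Vinogradov--Korobov zero-free region, then conversion into bounds on the line) matches the paper's. The paper, however, simply cites Hinz \cite{hinz1980zerofree} for both inputs: Satz 1.1 for the zero-free region, and Satz 2.1 for $L(\delta+it,\chi)\ll\exp(C_2\log\log|t|)$ when $\delta\ge 1-C_3(\log\log|t|)^{2/3}(\log|t|)^{-2/3}$. So your Steps 1--2, including the unresolved uniformity problem for the binary-form exponential sums, can be replaced by that citation.

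\textbf{The gap is in Step 3:} the mechanism you describe does not give the stated exponents. Borel--Carath\'eodory applied to $\log L$ on discs confined to the zero-free region leaves $1+it$ within $\eta=c(\log|t|)^{-2/3}(\log\log|t|)^{-1/3}$ of the boundary. Moreover, both available inputs are of order $\log\log|t|$: the upper bound for $\real\log L$ on the circle, and the bound $|\log L(1+\epsilon+it)|\le\log\zeta_F(1+\epsilon)\approx\log(1/\epsilon)$ at the centre. This yields only $\frac{L'}{L}(1+it,\chi)\ll\eta^{-1}\log\log|t|\asymp(\log|t|)^{2/3}(\log\log|t|)^{4/3}$. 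The three-circle theorem cannot repair this, since the only size bound for $\log L$ available on such discs is of that order.

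The loss then propagates to the reciprocal. Your integration from $1+it$ to $1+\eta+it$ needs $\eta\cdot\sup_{1\le\sigma\le1+\eta}|\frac{L'}{L}(\sigma+it)|=O(1)$, so you need the bound uniformly on that segment, not only at $\sigma=1$. With the weaker bound you only get $|\log L(1+it)-\log L(1+\eta+it)|\ll\log\log|t|$, i.e.\ $1/L(1+it,\chi)\ll(\log|t|)^{O(1)}$ rather than \eqref{eq:bdreciLfunc}.

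\textbf{The missing ingredient} is Landau's lemma \cite[\S3.9, Theorems 3.10--3.11]{titchmarsh1986zeta}, which works on discs much wider than the zero-free region.
\begin{enumerate}
\item Put $\theta=(\log\log|t|/\log|t|)^{2/3}$ and $\phi=C\log\log|t|$, so that $L\ll e^{\phi}$ for $\real(s)\ge1-\theta$ (Richert/Hinz).
\item Put $s^*=1+c\,\theta/\phi+it$. On $|s-s^*|\le r$ with $r\asymp\theta$, one has $\frac{L'}{L}(s)=\sum_{|\rho-s^*|\le r/2}(s-\rho)^{-1}+O(\phi/\theta)$ for $|s-s^*|\le r/4$.
\item For $\real(s)\ge1-c'\theta/\phi$, the zero-free region gives $|(s-\rho)^{-1}-(s^*-\rho)^{-1}|\ll\real(s^*-\rho)^{-1}$ termwise.
\item By positivity, $\sum_\rho\real(s^*-\rho)^{-1}=\real\frac{L'}{L}(s^*)+O(\phi/\theta)\ll\phi/\theta$.
\end{enumerate}
Hence $L'/L\ll\phi/\theta\asymp(\log|t|)^{2/3}(\log\log|t|)^{1/3}$ uniformly in that strip, and your integration argument then gives \eqref{eq:bdreciLfunc}. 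This is what the paper does by invoking Landau's theorem as in \cite[Proposition 3.3]{truelsen2011qe}.

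Your weaker bounds would in fact suffice for the paper's applications, since only $\frac{L'}{L}(1\pm it)=o(\log t)$ and $1/L(1+it)\ll|t|^{o(1)}$ are used there. They do not, however, prove the lemma as stated.
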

\begin{proof}
By \cite[Satz 1.1]{hinz1980zerofree}, there is a sufficiently large $L$ and a constant $C_1$ such that $L(\delta+it,\chi)$ is zero-free in the region 
\[\delta\geq 1-\frac{C_1}{(\log |t|)^{\frac{2}{3}} (\log \log |t|)^{\frac{1}{3}}} \quad \text{and} \quad |t|\geq L. \]
Also, by \cite[Satz 2.1]{hinz1980zerofree}, there are constants $C_2$ and $C_3$ such that 
\[ L(\delta+it,\chi)=O \left(\exp(C_2 \log\log |t|)\right)\]
for 
\[\delta\geq 1-C_3\frac{(\log \log |t|)^{\frac{2}{3}}}{(\log |t|)^{\frac{2}{3}}} \quad \text{and} \quad |t|\geq L.\] 
Applying a classical result of Landau \cite[Theorem 3.10] {titchmarsh1986zeta}, following the approach in Proposition 3.3 of \cite{truelsen2011qe}, yields the desired bounds.
\end{proof}
\subsection{Bounds on $\GL_2$ automorphic $L$-functions} We follow the normalization of \cite[Section 4.6]{bump1997automorphic}. Let $\phi$ be a cuspidal automorphic form on $\mathrm{GL}_{2}(\mathbb{A}_{F})$ with Casimir eigenvalue $1+\nu^2$ and the central character $\omega$.
Assume that $\phi$ is right invariant under $K_{\infty}K_{f}$ and is a Hecke eigenform. 
For a prime ideal $\mathfrak{p}$ of $F$, let $\lambda(\mathfrak{p})$ be the Hecke eigenvalue of $\phi$ at $\mathfrak{p}$. 
Then the Satake parameters $\alpha_{\mathfrak{p}}, \beta_{\mathfrak{p}}$ satisfy 
\[\alpha_\frakp+\beta_\frakp= N(\mathfrak{p})^{-1/2} \lambda(\mathfrak{p}), \quad \alpha_\frakp \beta_\frakp = \omega(\mathfrak{p}).\]
For $\mathrm{Re}(s)>1$, the $L$-function $L(s,\phi)$ is defined by the Euler product
\[L(s,\phi)=\prod_{\frakp} \frac{1}{(1-\alpha_\frakp N(\mathfrak{p})^{-s})(1-\beta_\frakp N(\mathfrak{p})^{-s})}\]
and extends meromorphically to $\C$. If $\chi$ is a class group character, then the $L$-function twisted by $\chi$ is defined by
\begin{equation}\label{Lsphichipr}L(s,\phi\otimes\chi)=\prod_{\frakp} \frac{1}{(1-\alpha_\frakp\chi(\frakp) N(\mathfrak{p})^{-s})(1-\beta_\frakp\chi(\frakp) N(\mathfrak{p})^{-s})}.\end{equation}
The Hecke relations
\[\lambda(\frakp^{k+1})=\lambda(\frakp)\lambda(\frakp^k)- \omega (\frakp) N(\frakp) \lambda(\frakp^{k-1})\]
imply that the $L$-functions have the Dirichlet series representation
\[L(s,\phi)=\sum_{0\neq \frakm\subseteq \calO_F}\frac{\lambda(\frakm)}{N(\frakm)^{\frac{1}{2}+s}},\]
and
\begin{equation}\label{Lsphichi}L(s,\phi\otimes\chi)=\sum_{0\neq \frakm\subseteq \calO_F}\frac{\lambda(\frakm)\chi(\frakm)}{N(\frakm)^{\frac{1}{2}+s}}.\end{equation}
By \cite[Theorem 1.1]{michel2010subconv} of Michel and Venkatesh, we have the following subconvexity bound for $L(s,\phi\otimes \chi)$.
\begin{lem}\label{lem:gl2subconv}
There is an absolute constant $\delta>0$ such that 
\[
L(1/2+it,\phi\otimes\chi) \ll_{F} (1+|t|)^{1-\delta}.
\]
\end{lem}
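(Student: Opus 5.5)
This bound is an instance of \cite[Theorem 1.1]{michel2010subconv}, so the plan is to check that $L(s,\phi\otimes\chi)$ is exactly the kind of $L$-function that theorem treats. Michel and Venkatesh prove, for any number field $F$ and any fixed cuspidal automorphic representation $\pi$ of $\GL_2(\A_F)$, a bound $L(1/2,\pi\otimes\chi')\ll C(\pi\otimes\chi')^{1/4-\delta'}$ for some absolute $\delta'>0$. Here $\chi'$ runs over Hecke characters (unitary characters of $F^\times\backslash\A_F^\times$), and $C(\cdot)$ is the analytic conductor. The implied constant depends on $F$ and $\pi$.

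First, I will identify the object. Since $\phi$ is a cuspidal Hecke eigenform, right $K_\infty K_f$-invariant, it generates an irreducible cuspidal automorphic representation $\pi$ with central character $\omega$. This $\pi$ is unramified at every finite place, and its archimedean component is spherical with parameter $\nu$. The class group character $\chi$ lifts to an unramified Hecke character $\omega_\chi$ with trivial archimedean component, as in Lemma~\ref{lem:chibijection}. Comparing Euler factors with \eqref{Lsphichipr} gives
\[L(s,\phi\otimes\chi)=L\bigl(s,\pi\otimes\omega_\chi\bigr),\]
with the normalization of \cite{bump1997automorphic}, up to the finite Euler product versus completed $L$-function convention.

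Next, I will absorb the shift $t$ into the twist. Let $|\cdot|_{\A}$ be the idelic norm and put $\chi'_t=\omega_\chi|\cdot|_{\A}^{it}$. Then $\chi'_t$ is a unitary Hecke character, and
\[L(1/2+it,\pi\otimes\omega_\chi)=L(1/2,\pi\otimes\chi'_t).\]
The finite conductor of $\pi\otimes\chi'_t$ is $1$, since everything is unramified. At the complex place the local parameters are shifted by $it$ (with the convention $|z|_\C=|z|^2$), so the archimedean conductor is
\[C_\infty(\pi\otimes\chi'_t)\asymp_{\nu}\prod_{\pm}(1+|t\pm\nu|)^2\asymp_\nu(1+|t|)^4.\]
Applying \cite[Theorem 1.1]{michel2010subconv} then gives
\[L(1/2+it,\phi\otimes\chi)\ll_{F,\pi}\bigl((1+|t|)^4\bigr)^{1/4-\delta'}=(1+|t|)^{1-4\delta'}.\]
Taking $\delta=4\delta'$ yields the lemma. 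The dependence on $\phi$ is harmless, because only finitely many $\phi$ and $\chi$ enter in Section~\ref{sec:qe}.

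The step I expect to need the most care is verifying that Michel--Venkatesh's theorem applies in the $t$-aspect for a \emph{fixed} $\GL_2$ form over an imaginary quadratic field. Their result covers twists by arbitrary Hecke characters, including the archimedean twist $|\cdot|^{it}$; this is the ``$\GL_2\times\GL_1$'' case of their main theorem. One also has to confirm that the convexity exponent $1/4$ of $C$ corresponds to $(1+|t|)^1$ for a complex place. If the archimedean normalization were off, for example $|z|_\C=|z|$, the exponent $4$ would become $2$. The conclusion would still have the form $(1+|t|)^{1-\delta}$ with some $\delta>0$, as long as the convexity bound in the paper's normalization is $(1+|t|)^{1}$. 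That can be checked directly from the functional equation and the Phragm\'en--Lindel\"of principle applied to the degree-$4$ archimedean gamma factor $\Gamma_\C(s+it\pm\nu)$, noting that the paper's $L(s,\phi)$ has its critical line at $\real(s)=1/2$.
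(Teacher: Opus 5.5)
Your proposal is correct and follows the paper, which proves this lemma simply by citing \cite[Theorem 1.1]{michel2010subconv}. Your identification $L(s,\phi\otimes\chi)=L(s,\pi\otimes\omega_\chi)$, the absorption of the shift into the unitary twist $\omega_\chi|\cdot|_{\A}^{it}$, and the archimedean conductor count $\asymp(1+|t|)^4$ at the complex place are exactly the bookkeeping the citation leaves implicit, and your remark that the implied constant also depends on $\phi$ (which the stated $\ll_F$ suppresses) is accurate and harmless, since only finitely many $\phi$ and $\chi$ occur in Section~\ref{sec:qe}.
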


\section{Quantum ergodicity of Eisenstein series} \label{sec:qe}
In this section, we prove our main result, Theorem~\ref{thm:qe}, using the explicit Fourier expansion of Eisenstein series and Maass cusp forms. The exact Fourier expansion of Eisenstein series at each cusp has been computed extensively in \cite{EGM2,heitkamp1992hecke}, and we recall the formula without modification. For the Maass cusp forms, we appeal to the adelic realization described in Section~\ref{sec:Maass}, and employ the Fourier expansions of Hecke eigenforms as given in \cite{assing2024supnorm,blomer2020supnorm}. Fix a set of cusp representatives
\begin{equation}\label{cuspset}\mathrm{C}_\Gamma=\{\eta_1,\ldots,\eta_h\}.\end{equation}
Let $A_j=A_{\eta_j}$ and $\frakm_j=\frakm_{\eta_j}$ be as defined in \eqref{Aetadef} and \eqref{frakmeta}. By the remark below \eqref{Ajdef}, this matches with the notation there. By \cite[Theorem 14.5]{heitkamp1992hecke} (see also \cite[Theorem 3.13]{raulf2014trace}), the Fourier expansion of the Eisenstein series $E_{\eta_j}$ at the cusp $\eta_i$ is given by
\begin{equation}\label{EetaFourier}
\begin{aligned}
E_{\eta_j}(A_i^{-1} v,s)&=\delta_{i,j} r^s+\tau_{i,j}(s) r^{2-s} +\frac{2(2\pi)^s}{h_F |d_F|^{s/2} \Gamma(s)} \frac{N(\frakm_i)^{2-s}}{N(\frakm_j)^s} \\ &\times \biggl(\sum_{0\neq n\in \frakm_i^2}\sum_{\chi\in \widehat{\Cl}_F}\frac{\chi(\frakm_i \frakm_j)|n|^{s-1}}{L(s,\chi)}\sigma_{1-s}\bigl(\chi,(n)\frakm_i^{-2}\bigr)r K_{s-1}\Bigl(\frac{4\pi|n|r}{\sqrt{|d_F|}}\Bigr)e\Bigl(\langle \frac{2\overline{n}}{\sqrt{d_F}},z\rangle\Bigr)\biggr),
\end{aligned}
\end{equation}
where \begin{equation}\label{tauij}\tau_{i,j}(s)=\frac{2\pi}{h_F (s-1) |d_F|^{1/2}} \frac{N(\frakm_i)^{2-s}}{N(\frakm_j)^{s}}\sum_{\chi\in\widehat{\Cl}_F}\chi(\frakm_i \frakm_j) \frac{L(s-1,\chi)}{L(s,\chi)}.\end{equation}
Here, $\delta_{i,j}$ is the delta symbol which equals $1$ if $i=j$ and $0$ otherwise, the bracket denotes the trace,
\[\langle z_1, z_2\rangle=\frac{z_1 \overline{z}_2+\overline{z}_1z_2}{2}=\real (z_1 \overline{z}_2),\]
and $\sigma_s(\chi,\frakm)$ denotes the divisor sum
\[\sigma_s(\chi,\frakm)=\sum_{\substack{\fraka\in \mathcal{M} \\ \frakm\subset\fraka\subset \calO_F}}\chi(\fraka) N(\fraka)^s.\]
Next, let $\phi$ be a Hecke eigenform with Casimir eigenvalue $1+\nu^2$, Hecke eigenvalues $\lambda(\frakm)$, and central character $\omega_\chi$, where $\omega_\chi$ is the Hecke character lifted from a class group character $\chi$ with trivial archimedean component. As in Section~\ref{sec:Maass}, let $\theta_j$ be the finite idele associated to $\frakm_j$. By \cite[(3.1)]{assing2024supnorm} and the notation from \cite[Section 8]{blomer2020supnorm}, the Whittaker expansion of $\phi$ in our setting equals
\[\begin{aligned}
    \phi\biggl(\begin{pmatrix}
        r & z \\ 0 & 1
    \end{pmatrix} \begin{pmatrix}
     \theta_j^{-1} & 0 \\ 0 & 1   
    \end{pmatrix} \biggr) 
    &= c_{\phi} \sum\limits_{q\in F^\times}\chi(\frakm_j)^{1/2} \frac{\lambda\bigl((q) \theta_j^{-1} \mathfrak{d}\bigr)}{N\bigl((q) \theta_j^{-1} \mathfrak{d}\bigr)}W_\infty \biggl(\begin{pmatrix}
        q & 0 \\ 0 & 1
    \end{pmatrix} \begin{pmatrix}
     r & z \\ 0 & 1   
    \end{pmatrix}\biggr) \\
    &= c_{\phi} \chi(\frakm_j)^{1/2} \sum\limits_{0\neq q\in (\theta_j^{-1} \mathfrak{d})^{-1}} \frac{\lambda\bigl((q) \theta_j^{-1} \mathfrak{d}\bigr)}{N\bigl((q) \theta_j^{-1} \mathfrak{d}\bigr)} \frac{|q r| K_{i \nu}\bigl(2\pi |2q| r\bigr)}{|\Gamma(1+i\nu)\Gamma(1-i\nu)|^{1/2}}e\bigl(\langle 2\overline{q},z\rangle\bigr).
\end{aligned}\]
Here the factor $\chi(\frakm_j)^{1/2}$ arises from the central character, whose archimedean component is trivial and hence is independent of $q$. We may regard the $\Gamma$-factors as a constant and absorb it in the constant $c_{\phi}$. Since $\mathfrak{d}=\sqrt{d_F} \calO_F$, we have
\begin{equation}\label{Heckeeigenfourier}\begin{aligned}
\phi\biggl(\begin{pmatrix}
        r & z \\ 0 & 1
    \end{pmatrix} \begin{pmatrix}
     \theta_j^{-1} & 0 \\ 0 & 1   
    \end{pmatrix} \biggr) 
    &= c_{\phi} \chi(\frakm_j)^{\frac{1}{2}}\sum\limits_{0\neq n\in \frakm_j}\frac{\lambda\bigl((n) \frakm_j^{-1} \bigr)}{N\bigl((n) \frakm_j^{-1}\bigr)} \frac{|n|}{\sqrt{|d_F|}} r K_{i\nu}\Bigl(\frac{4\pi |n| r}{\sqrt{|d_F|}}\Bigr) e\Bigl(\langle \frac{2\overline{n}}{\sqrt{d_F}},z\rangle\Bigr),
\end{aligned}\end{equation}
where $c_\phi$ is a constant depending on $\phi$. Note that while we fix representatives $\frakm_j$ and the corresponding finite ideles $\theta_j$, the Fourier expansion remains valid after replacing $\frakm_j$ by any fractional ideal in the same class and replacing $\theta_j$ accordingly.
\subsection{Contribution of Maass cusp forms}
The following proposition shows that the contribution of Maass cusp forms converges to $0$ as $t\to\infty$. 
\begin{prop}\label{prop:Maasscontribution}
    Let $f$ be a Maass cusp form. Then for any $1\leq j\leq h$ we have
    \[\lim_{t\to\infty}\int_{\Gamma\backslash \h^3} f(v) |E_{\eta_j}(v,1+it)|^2\frac{dx dy dr}{r^3}=0.\]
\end{prop}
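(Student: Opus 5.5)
We follow the unfolding strategy of Luo--Sarnak and Koyama, inserting Proposition~\ref{prop:f_adelic} at the point where the Fourier coefficients of $f$ at the cusp $\eta_j$ must be identified with Hecke eigenvalues. Since $f$ is a cusp form, it decays rapidly at every cusp. Hence the integral converges absolutely, and we may regularize $E_{\eta_j}$ through a truncation or Zagier's renormalization; we expect the constant terms to contribute nothing. We may assume $f$ is even, since otherwise the integral vanishes identically.

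\emph{Unfolding.} For $\real(s)$ large, write $|E_{\eta_j}(v,1+it)|^2 = E_{\eta_j}(v,s)\,\overline{E_{\eta_j}(v,1+it)}$. Unfolding $E_{\eta_j}(v,s)$ against the cusp $\eta_j$, using \eqref{Eetadef} and \eqref{Gammaetaconj}, yields
\[
\int_{0}^\infty\int_{\C/\frakm_j^{-2}} f_j(z+r\bj)\,\overline{E_{\eta_j}(A_j^{-1}(z+r\bj),1+it)}\; r^{s}\,\frac{dx\,dy\,dr}{r^3}.
\]
We expand $E_{\eta_j}(A_j^{-1}v,1+it)$ by \eqref{EetaFourier} with $i=j$. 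By Proposition~\ref{prop:f_adelic}, $f_j$ is a finite linear combination of terms $\phi\bigl(\begin{smallmatrix} r& z\\0&1\end{smallmatrix}\bigr)\bigl(\begin{smallmatrix}\theta_j^{-2}&0\\0&1\end{smallmatrix}\bigr)$ with $\phi$ a Hecke eigenform of central character $\omega_\chi$. Each such term has the Fourier expansion \eqref{Heckeeigenfourier}, with $\frakm_j$ replaced by $\frakm_j^2$, so the frequencies $n$ run over $\frakm_j^2$, exactly matching the lattice in \eqref{EetaFourier}. The constant terms are killed by cuspidality. Orthogonality of the characters $e(\langle 2\overline n/\sqrt{d_F},z\rangle)$ then reduces the $z$-integral to a single sum over $n\in\frakm_j^2$.

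\emph{Rankin--Selberg identity.} The remaining $r$-integral is the standard one,
\[
\int_0^\infty K_{i\nu}(ar)\,K_{it}(ar)\, r^{s-1}\,dr,
\]
which evaluates to a ratio of four $\Gamma$-factors over $\Gamma(s)$. The Dirichlet series that remains is
\[
\sum_{n}\lambda\bigl((n)\frakm_j^{-2}\bigr)\,\overline{\sigma_{-it}\bigl(\chi',(n)\frakm_j^{-2}\bigr)}\,|n|^{\cdots}N(\cdot)^{-s}.
\]
We group the terms by the integral ideal $\frakn=(n)\frakm_j^{-2}$, which ranges over the trivial class twisted by $\frakm_j^{2}$. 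We then write the class restriction as an average $\frac1h\sum_{\psi\in\widehat{\Cl}_F}\psi(\cdot)$; this is the cusp-averaging step. Combined with the $\chi'$-sum in \eqref{EetaFourier}, the Hecke relations give the factorization
\[
\sum_{\frakn}\frac{\lambda(\frakn)\,\sigma_{w}(\chi',\frakn)\,\psi(\frakn)}{N(\frakn)^{s'}}=\frac{L(s',\phi\otimes\psi)\,L(s'-w,\phi\otimes\psi\chi')}{L(2s'-w,\omega_\chi\psi^2\chi')}.
\]
Continuing analytically to $s=1$, the integral becomes a finite linear combination, with bounded coefficients, of terms of the form
\[
\frac{\Gamma\text{-factors}}{|\Gamma(1+it)|^2}\cdot\frac{L(\tfrac12,\phi\otimes\psi)\,L(\tfrac12+it,\phi\otimes\psi\chi')}{L(1+it,\chi_1)\,L(1-it,\chi_2)\,L(1,\cdot)}.
\]
Cross terms and the unit group $\{\pm1\}$ cause only bookkeeping changes.

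\emph{Estimates.} By Stirling's formula, the ratio of $\Gamma$-factors is $\asymp|t|^{-1}$ for fixed $\nu$. Lemma~\ref{lem:gl2subconv} bounds the numerator by $|t|^{1-\delta}$, and Lemma~\ref{lem:Lft-bounds} bounds the reciprocal $L(1+it,\chi)$ factors by powers of $\log|t|$. Each term is therefore $O(|t|^{-\delta+\epsilon})\to0$, which proves the proposition.

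The main obstacle is the middle step: the divisor sums $\sigma_{-it}(\chi',(n)\frakm_j^{-2})$ and the Hecke eigenvalues are indexed over a single ideal class, and the multiplicative factorization only appears after averaging over class characters. Tracking the factors $\chi(\frakm_j)^{1/2}$, $\chi'(\frakm_j^2)$ and $N(\frakm_j)$ so that everything assembles into genuine twisted $L$-functions $L(\cdot,\phi\otimes\psi)$ is the delicate part. We would first carry this out for $j=1$, where $\frakm_1=\calO_F$, and then transport the result to general $j$ using the remark after \eqref{Heckeeigenfourier}.
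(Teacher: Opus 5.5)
Your proposal follows essentially the same route as the paper. You unfold one Eisenstein series, realize $f_j$ through Hecke eigenforms evaluated at $\theta_j^{-2}$ so the frequencies run over $\frakm_j^2$, and average over class group characters to turn the single-class sum into $L(\cdot,\phi\otimes\psi)L(\cdot,\phi\otimes\psi\chi')/L(\cdot,\omega_\chi\psi^2\chi')$. You then conclude with the $|t|^{-1}$ Stirling bound, Michel--Venkatesh subconvexity, and the bound on $1/L(1\pm it,\chi)$, exactly as the paper does. The remaining slips are bookkeeping and do not affect the estimate:
\begin{itemize}
\item the continuation should be to $s=1+it$, not $s=1$;
\item in the paper's normalization the Dirichlet series needs $N(\frakn)^{1/2+s'}$;
\item the denominator contains only one $\GL_1$ value at $1+it$ and one at $1-it$, with no extra $L(1,\cdot)$;
\item no regularization is needed, since $f$ decays rapidly.
\end{itemize}
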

\begin{proof}
    Let \[I_j(s)=\int_{\Gamma\backslash \h^3} f(v) E_{\eta_j}(v,1+it) E_{\eta_j}(s) \frac{dx dy dr}{r^3}.\]
Unfolding the integral using \eqref{Gammaetaconj}, we obtain
\[\begin{aligned}I_j(s)&=\int_{\Gamma\backslash \h^3} f(v) E_{\eta_j}(v,1+it) \sum_{\gamma \in \Gamma_{\eta_j}\backslash \Gamma}\Im(A_j \gamma v)^s \frac{dx dy dr}{r^3} \\
&=\int_{\Gamma_{\eta_j}\backslash \h^3} f(v) E_{\eta_j}(v,1+it) \Im(A_j v)^s \frac{dx dy dr}{r^3} \\ 
&=\int_{(A_j \Gamma_{\eta_j}A_j^{-1})\backslash \h^3}f(A_j ^{-1} v) E_{\eta_j}(A_j ^{-1} v,1+it) r^{s-3} dx dy dr \\
&=\int_{0}^{\infty}\int_{P(\frakm_j^{-2})}f(A_j ^{-1} v) E_{\eta_j}(A_j ^{-1} v,1+it) r^{s-3} dx dy dr,
\end{aligned}\]
where $P(\frakm)$ denotes the fundamental domain of the lattice $\frakm$. By Proposition~\ref{prop:f_adelic}, $f(A_j^{-1}v)$ is a finite linear combination of the functions of the form \eqref{Phitj}, with $\Phi$ a Hecke eigenform. Accordingly, consider a Hecke eigenform $\phi$ with central character $\omega_{\chi}$. By \eqref{Heckeeigenfourier}, we have
\[\phi\biggl(\begin{pmatrix}
        r & z \\ 0 & 1
    \end{pmatrix} \begin{pmatrix}
     \theta_j^{-2} & 0 \\ 0 & 1   
    \end{pmatrix} \biggr) 
    =C_j(\phi)\sum\limits_{0\neq n\in \frakm_j^2}\frac{\lambda\bigl((n) \frakm_j^{-2} \bigr)}{N\bigl((n) \frakm_j^{-2}\bigr)} \frac{|n|}{\sqrt{|d_F|}} r K_{i\nu}\Bigl(\frac{4\pi |n| r}{\sqrt{|d_F|}}\Bigr) e\Bigl(\langle \frac{2\overline{n}}{\sqrt{d_F}},z\rangle\Bigr),\]
    where $C_j(\phi)$ is a constant depending on $\phi$ and $j$. Let
\[I_{j,\phi}(s)=\int_{0}^{\infty}\int_{P(\frakm_j^{-2})}\phi\biggl(\begin{pmatrix}
        r & z \\ 0 & 1
    \end{pmatrix} \begin{pmatrix}
     \theta_j^{-2} & 0 \\ 0 & 1   
    \end{pmatrix} \biggr) E_{\eta_j}(A_j ^{-1} v,1+it) r^{s-3} dx dy dr.\]
To prove the proposition, it suffices to show that
\[\lim_{t\to \infty} I_{j,\phi}(1-it)=0.\]
By \eqref{EetaFourier} and the orthogonality, we obtain 
\[\begin{aligned}
  I_{j,\phi}(s) &=C_j(\phi)\mathrm{Vol}\bigl(P(\frakm_j^{-2})\bigr)\sum_{\chi_1\in\widehat{\Cl}_F}\int_0^\infty \frac{2(2\pi)^{1+it}N(\frakm_j)^{-2it}\chi_1(\frakm_j^2)}{h_F|d_F|^{1+\frac{it}{2}}\Gamma(1+it)L(1+it,\chi_1)} \\
  &\times \biggl(\sum_{0\neq n\in\frakm_j^2}\frac{|n|^{1+it}\sigma_{-it}\bigl(\chi_1,(n)\frakm_j^{-2}\bigr)\lambda\bigl((n)\frakm_j^{-2}\bigr)}{N\bigl((n)\frakm_j^{-2}\bigr)}K_{it}\Bigl(\frac{4\pi|n|r}{\sqrt{|d_F|}}\Bigr)K_{i\nu}\Bigl(\frac{4\pi|n|r}{\sqrt{|d_F|}}\Bigr)  \biggr)r^{s-1}dr \\
  &=C_j(\phi)\mathrm{Vol}\bigl(P(\frakm_j^{-2})\bigr)\frac{2(2\pi)^{1+it}N(\frakm_j)^{-2it}}{h_F|d_F|^{1+\frac{it}{2}}\Gamma(1+it)} (4\pi)^{-s}|d_F|^{\frac{s}{2}} \\
  &\times \biggl(\sum_{\chi_1\in\widehat{\Cl}_F}
  \frac{\chi_1(\frakm_j^2)}{L(1+it,\chi_1)}\sum_{0\neq n\in\frakm_j^2}\frac{\sigma_{-it}\bigl(\chi_1,(n)\frakm_j^{-2}\bigr)\lambda\bigl((n)\frakm_j^{-2}\bigr)}{N\bigl((n)\frakm_j^{-2}\bigr) |n|^{s-it-1} }  \biggr)\int_{0}^{\infty} K_{it}(r)K_{i\nu}(r) r^{s-1}dr.
\end{aligned}\]
By \cite[6.576]{gradshteyn2015table}, we have
\begin{equation}\label{besselidentity}\int_{0}^{\infty}K_{it}(r)K_{i\nu}(r)r^s\frac{dr}{r}= \frac{\Gamma(\frac{s+it+i\nu}{2})\Gamma(\frac{s+it-i\nu}{2})\Gamma(\frac{s-it+i\nu}{2})\Gamma(\frac{s-it-i\nu}{2})}{\Gamma(s)}.\end{equation}
Denote gamma factors on the right hand side by $T(s)$. We have
\begin{equation}\label{Ijphi}I_{j,\phi}(s)=C_j(\phi)\mathrm{Vol}(P(\frakm_j^{-2}))\frac{2^{1-s}(2\pi)^{1+it-s}N(\frakm_j)^{-s-it+1}}{h_F|d_F|^{1+\frac{it}{2}-\frac{s}{2}}\Gamma(1+it) }T(s) \sum_{\chi_1\in \widehat{\Cl}_F}\frac{\chi_1(\frakm_j^2)R_{\chi_1}(s)}{L(1+it,\chi_1)}, \end{equation}
with 
\[R_{\chi_1}(s)=N(\frakm_j)^{s-it-1}\sum_{0\neq n\in \frakm_j^2} \frac{\sigma_{-it}\bigl(\chi_1,(n)\frakm_j^{-2}\bigr)\lambda\bigl((n)\frakm_j^{-2}\bigr)}{N\bigl((n)\frakm_j^{-2}\bigr) |n|^{s-it-1} }=\sum_{0\neq n\in \frakm_j^2} \frac{\sigma_{-it}\bigl(\chi_1,(n)\frakm_j^{-2}\bigr)\lambda\bigl((n)\frakm_j^{-2}\bigr)}{N\bigl((n)\frakm_j^{-2}\bigr)^{\frac{s-it+1}{2}} }
.\]
Now, we use the averaging technique to write $R_{\chi_1}(s)$ in terms of $L$-functions. For any fractional ideal $\frakm$, we have
\begin{equation}\label{eq:charsum}\frac{1}{h_F}\sum_{\chi\in \widehat{\Cl}_F} \chi(\frakm_j^2\frakm)=\begin{cases}
    1 & \text{if } [\frakm]=[\frakm_j^{-2}] \\
    0 & \text{otherwise.}
\end{cases}\end{equation}
Thus
\[R_{\chi_1}(s)=\frac{1}{h_F}\sum_{\chi_2\in \widehat{\Cl}_F}\chi_2(\frakm_j^2)\sum_{0\neq \frakm\subseteq \calO_F} \frac{\sigma_{-it}(\chi_1,\frakm)\lambda(\frakm)\chi_2(\frakm)}{N(\frakm)^{\frac{1}{2}+\frac{s-it}{2}}}. \]
Denote the inner sum by $R_{\chi_1,\chi_2}(s)$. Using the multiplicativity of the divisor sum, Hecke eigenvalue, and the class group character together with \eqref{Lsphichipr} and \eqref{Lsphichi}, we obtain
\[\begin{aligned}
R_{\chi_1,\chi_2}(s)&=\prod_{\frakp \text{ prime}} \sum_{k=0}^{\infty} \frac{\sigma_{-it}(\chi_1,\frakp^k)\lambda(\frakp^k)\chi_2(\frakp^k)}{N(\frakp)^{k(\frac{s-it}{2}+\frac{1}{2})}} \\
&=\prod_{\frakp \text{ prime}} \sum_{k=0}^{\infty} \frac{\lambda(\frakp^k)\chi_2(\frakp^k)}{N(\frakp)^{k(\frac{s-it}{2}+\frac{1}{2})}}\sum_{l=0}^k \chi_1(\frakp)^lN(\frakp)^{-itl} \\ 
&=\prod_{\frakp \text{ prime}} \sum_{k=0}^{\infty} \frac{\lambda(\frakp^k)\chi_2(\frakp^k)}{N(\frakp)^{k(\frac{s-it}{2}+\frac{1}{2})}} \frac{1-\chi_1(\frakp)^{k+1}N(\frakp)^{-it(k+1)}}{1-\chi_1(\frakp)N(\frakp)^{-it}} \\
&=\prod_{\frakp \text{ prime}} \frac{1}{1-\chi_1(\frakp)N(\frakp)^{-it}} \Bigl(\sum_{k=0}^{\infty} \frac{\lambda(\frakp^k)\chi_2(\frakp^k)}{N(\frakp)^{k(\frac{s-it}{2}+\frac{1}{2})}}-\chi_1(\frakp)N(\frakp)^{-it}\sum_{k=0}^{\infty}\frac{\lambda(\frakp^k)\chi_1(\frakp^k)\chi_2(\frakp^k)}{N(\frakp)^{k(\frac{s+it}{2}+\frac{1}{2})}}\Bigr) \\
&= \prod_{\frakp \text{ prime}}\frac{1}{1-\chi_1(\frakp)N(\frakp)^{-it}}\Bigl(\frac{1}{(1-\alpha_\frakp \chi_2(\frakp)N(\frakp)^{-\frac{s-it}{2}})(1-\beta_\frakp \chi_2(\frakp)N(\frakp)^{-\frac{s-it}{2}})} \\ 
&\quad\quad\quad\quad\quad\quad\quad\quad\quad\quad\quad\quad-\frac{\chi_1(\frakp)N(\frakp)^{-it}}{(1-\alpha_\frakp \chi_1(\frakp)\chi_2(\frakp)N(\frakp)^{-\frac{s+it}{2}})(1-\beta_\frakp \chi(\frakp)\chi_2(\frakp)N(\frakp)^{-\frac{s+it}{2}})}\Bigr) \\
&=\prod_{\frakp \text{ prime}}(1-\alpha_\frakp \beta_\frakp \chi_1(\frakp)\chi_2(\frakp)^2 N(\frakp)^{-s} )\Bigl(\frac{1}{(1-\alpha_\frakp \chi_2(\frakp)N(\frakp)^{-\frac{s-it}{2}})(1-\beta_\frakp \chi_2(\frakp)N(\frakp)^{-\frac{s-it}{2}})}\Bigr) \\
&\quad\quad\times \Bigl(\frac{1}{(1-\alpha_\frakp \chi_1(\frakp)\chi_2(\frakp)N(\frakp)^{-\frac{s+it}{2}})(1-\beta_\frakp \chi_1(\frakp)\chi_2(\frakp)N(\frakp)^{-\frac{s+it}{2}})}\Bigr)\\
&=\frac{L(\frac{s-it}{2},\phi \otimes \chi_2)L(\frac{s+it}{2},\phi \otimes \chi_1\chi_2)}{L(s,\chi \chi_1 \chi_2^2)}.
\end{aligned}\]
Thus
\[R_{\chi_1,\chi_2}(1-it)=\frac{L(\frac{1}{2}-it,\phi \otimes \chi_2)L(\frac{1}{2},\phi \otimes \chi_1\chi_2)}{L(1-it,\chi \chi_1\chi_2^2)}. \]
By \eqref{eq:bdreciLfunc} and Lemma~\ref{lem:gl2subconv}, we have 
\[\frac{R_{\chi_1,\chi_2}(1-it)}{L(1+it,\chi_1)}=O(|t|^{1-\delta})\] for some $\delta>0$. On the other hand, by Stirling's formula
\[|\Gamma(\sigma+it)|\sim e^{-\frac{\pi|t|}{2}} |t|^{\sigma-\frac{1}{2}},\]
we have
\[\frac{T(1-it)}{\Gamma(1+it)}=O(|t|^{-1}).\]
By \eqref{Ijphi}, we conclude that
\[\lim_{t\to\infty} I_{j,\phi}(1-it)=0.\]
This finishes the proof.
\end{proof}

\subsection{Contribution of incomplete Eisenstein series}
Let $\psi(y)$ be a rapidly decreasing function on $\R^+$, so $\psi(y)=O_N(y^N)$ as $y\to\infty$ or $y\to 0$ for any $N\in\Z$. Then its Mellin transform
\[\Psi(s)=\int_0^\infty \psi(y) y^{-s} \frac{dy}{y}\]
is entire, and is of Schwartz class in $t$ for each vertical line $\sigma+it$. We have
\[\psi(y)=\frac{1}{2\pi i}\int_{(\sigma)} \Psi(s) y^s ds\]
for all $\sigma\in\R$. For such $\psi$ we define the incomplete Eisenstein series associated to the cusp $\eta_i$ by
\begin{equation}\label{Eetaincomplete}E_{\eta_i}(v|\psi)=\sum_{\gamma\in \Gamma_{\eta_i} \backslash \Gamma} \psi\bigl(\Im(A_i\gamma v)\bigr)=\frac{1}{2\pi i}\int_{(3)} \Psi(s) E_{\eta_i} (v,s)ds.\end{equation}
Let $\xi(s,\chi)$ denote the completed Hecke $L$-function,
\begin{equation}\label{xidef}\xi(s,\chi)=2(2\pi)^{-s}|d_F|^{\frac{s}{2}}\Gamma(s)L(s,\chi).\end{equation}
Then $\tau_{i,j}(s)$ in \eqref{tauij} satisfies
\begin{equation}\label{tauij2}\tau_{i,j}(s)=\frac{N(\frakm_i)^{2-s}}{N(\frakm_j)^s}\Bigl(\frac{1}{h_F}\sum_{\chi\in\widehat{\Cl}_F}\chi(\frakm_i\frakm_j)\frac{\xi(s-1,\chi)}{\xi(s,\chi)}\Bigr),\end{equation}
and the Fourier expansion of $E_{\eta_j}(A_i^{-1} v,s)$ in \eqref{EetaFourier} equals
\begin{equation}\label{EetaFourier2}
    \begin{aligned}
&\delta_{i,j} r^s+\tau_{i,j} (s) r^{2-s} +\sum_{0\neq n\in \frakm_i^2} \omega_{i,j}(n,s) |n|^{s-1}r K_{s-1}\Bigl(\frac{4\pi|n|r}{\sqrt{|d_F|}}\Bigr)e\Bigl(\langle \frac{2\overline{n}}{\sqrt{d_F}},z\rangle\Bigr),
\end{aligned}
\end{equation}
where
\begin{equation}\label{omegaij}\omega_{i,j}(n,s)=\frac{4N(\frakm_i)^{2-s}}{N(\frakm_j)^s} \Bigl(\frac{1}{h_F}\sum_{\chi\in \widehat{\Cl}_F}\frac{\chi(\frakm_i \frakm_j)}{\xi(s,\chi)}\sigma_{1-s}\bigl(\chi,(n)\frakm_i^{-2}\bigr)\Bigr).\end{equation}
The following proposition computes the contribution of the incomplete Eisenstein series.
\begin{prop}\label{prop:Eisensteincontribution}
For any $1\leq j\leq h$, we have
\[\int_{\Gamma\backslash\h^3} E_{\eta_i}(v|\psi) |E_{\eta_j}(v,1+it)|^2 dV = \frac{(2\pi)^2 N(\frakm_j^{-2})}{|d_F| \zeta_F(2) } \Bigl(
\int_{\Gamma\backslash\h^3} E_{\eta_i}(v|\psi) dV \Bigr) \log t + O\Bigl((\log t)^{\frac{1}{3}}(\log\log t)^{\frac{2}{3}}\Bigr) \]
as $t\to\infty$.
\end{prop}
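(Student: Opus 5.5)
Following Luo--Sarnak and Koyama, we unfold the incomplete Eisenstein series. By \eqref{Eetaincomplete} and \eqref{Gammaetaconj}, the integral $\int_{\Gamma\backslash\h^3} E_{\eta_i}(v|\psi)|E_{\eta_j}(v,1+it)|^2dV$ equals
\[\int_0^\infty\int_{P(\frakm_i^{-2})}\psi(r)\,\bigl|E_{\eta_j}(A_i^{-1}v,1+it)\bigr|^2\,\frac{dx\,dy\,dr}{r^3}.\]
We insert the Fourier expansion \eqref{EetaFourier2} at the cusp $\eta_i$ and use orthogonality in $z$. This splits the integral into a constant-term part and a non-constant part. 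The constant-term part is
\[\mathrm{Vol}(P(\frakm_i^{-2}))\int_0^\infty\psi(r)\,\bigl|\delta_{i,j}r^{1+it}+\tau_{i,j}(1+it)r^{1-it}\bigr|^2\frac{dr}{r^3}.\]
Since $|\tau_{i,j}(1+it)|$ is bounded (it is a ratio of completed $L$-values, and its size is controlled by the functional equation on $\real s=1$), this part is $O(1)$. It only matters that it is $o(\log t)$.

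The non-constant part is
\[\mathrm{Vol}(P(\frakm_i^{-2}))\sum_{0\neq n\in\frakm_i^2}|\omega_{i,j}(n,1+it)|^2|n|^2\int_0^\infty\psi(r)\,\Bigl|K_{it}\Bigl(\tfrac{4\pi|n|r}{\sqrt{|d_F|}}\Bigr)\Bigr|^2\frac{dr}{r}.\]
We write $\psi(r)=\frac{1}{2\pi i}\int_{(\sigma)}\Psi(s)r^s\,ds$. The $r$-integral then becomes a Mellin transform of $K_{it}^2$, computed by \eqref{besselidentity} with $\nu=t$. The $n$-sum becomes a Dirichlet series $D(s)=\sum_n|\omega_{i,j}(n,1+it)|^2|n|^{2-s}$. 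To express $D(s)$ through $L$-functions, we expand $|\omega_{i,j}|^2$ using \eqref{omegaij} as a double sum over $\chi_1,\chi_2\in\widehat{\Cl}_F$ of $\sigma_{-it}(\chi_1,\frakm)\overline{\sigma_{-it}(\chi_2,\frakm)}$ with $\frakm=(n)\frakm_i^{-2}$. We then use the character-averaging identity \eqref{eq:charsum} to pass from $n\in\frakm_i^2$ to a sum over all integral ideals. Exactly as in the computation of $R_{\chi_1,\chi_2}$ in Proposition~\ref{prop:Maasscontribution}, a Ramanujan-type identity gives, for each triple of characters,
\[\frac{L(w,\chi')\,L(w+it,\chi'')\,L(w-it,\chi''')\,L(w,\chi'''')}{L(2w,\chi^*)},\qquad w=\frac{s}{2},\]
for suitable products of $\chi_1,\chi_2,\chi_3$. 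The prefactor is $1/(\xi(1+it,\chi_1)\overline{\xi(1+it,\chi_2)})$.

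Next we shift the contour from $\real s$ large to $\real s=1$. The Gamma factors $\Gamma(\frac s2)^2\Gamma(\frac s2\pm it)/\Gamma(s)$ combined with $|\Gamma(1+it)|^{-2}$ behave like $|t|^{\real s-2}$ up to a polynomial factor, and $\Psi$ is Schwartz. Only terms in which $\chi'=\chi''''$ is trivial produce a double pole at $s=2$ (from $\zeta_F(w)^2$ at $w=1$); the other character combinations give at most simple poles or none. The residue at $s=2$ of a double pole involves the derivative. This brings out
\[\log t+\frac{L'}{L}(1\pm it,\cdot)+\frac{\Gamma'}{\Gamma}\text{-terms}=2\log t+O\Bigl((\log t)^{2/3}(\log\log t)^{1/3}\Bigr),\]
using \eqref{eq:bdlogLfunc} and Stirling. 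It is multiplied by $|L(1+it,\chi)|^2/|L(1+it,\chi_1)L(1+it,\chi_2)|$ and residues of $\zeta_F$. The leading coefficient is then matched, via the residue of $E_{\eta_i}$ at $s=2$ and $\int E_{\eta_i}(v|\psi)dV=\Psi(2)\mathrm{Res}_{s=2}E_{\eta_i}$ up to the stated constants, to the main term $\frac{(2\pi)^2N(\frakm_j^{-2})}{|d_F|\zeta_F(2)}\bigl(\int E_{\eta_i}(v|\psi)\,dV\bigr)\log t$. The remaining integral on $\real s=1$ is $O(t^{-\delta'})$ by Lemma~\ref{lem:gl1subconv}: each $L(\frac12\pm it,\cdot)$ is $\ll t^{1/6+\epsilon}$ against a Gamma decay of $t^{-1}$, and $1/L(1+it,\cdot)$ is controlled by \eqref{eq:bdreciLfunc}.

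The \textbf{main obstacle} is the bookkeeping in the character averaging. Only the diagonal combinations (essentially $\chi_1=\chi_2$) should contribute the double pole. The ratios $|L(1+it,\chi)|^2/|L(1+it,\chi_1)|^2$ must collapse to $1$ in the main term, and off-diagonal terms must be absorbed in the error. The error exponent $(\log t)^{1/3}(\log\log t)^{2/3}$ suggests the cross terms are handled by combining \eqref{eq:bdlogLfunc} and \eqref{eq:bdreciLfunc} after normalising by the main term. I would first verify the diagonal residue against the known $h_F=1$ computation of Koyama, and then check that off-diagonal characters yield only simple poles whose residues are bounded by the stated error.
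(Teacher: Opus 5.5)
Your route is the paper's: unfolding at $\eta_i$, $\tau_{i,j}(1\pm it)=O(1)$ from the functional equation, Mellin inversion with \eqref{besselidentity}, a third character to pass from $n\in\frakm_i^2$ to integral ideals, the Ramanujan-type identity, and a shift to $\real(s)=1$, where Lemma~\ref{lem:gl1subconv} gives $t^{-2/3+\epsilon}$. The gap is in the step you call the main obstacle, the simple-pole residues at $s=2$.

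In your normalization (second factor $\overline{\sigma_{-it}(\chi_2,\frakm)}=\sigma_{it}(\overline{\chi}_2,\frakm)$) the quotient is
\[
\frac{L(w,\chi_3)\,L(w,\chi_1\overline{\chi}_2\chi_3)\,L(w+it,\chi_1\chi_3)\,L(w-it,\overline{\chi}_2\chi_3)}{L(2w,\chi_1\overline{\chi}_2\chi_3^2)}.
\]
A simple pole at $w=1$ occurs when exactly one of $\chi_3$, $\chi_1\overline{\chi}_2\chi_3$ is trivial. Its residue contains $L(1+it,\chi_1\chi_3)L(1-it,\overline{\chi}_2\chi_3)$, and this must be divided by $L(1+it,\chi_1)\overline{L(1+it,\chi_2)}$.

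If you combine \eqref{eq:bdreciLfunc} with any available upper bound for $L(1+it,\cdot)$, you only bound this ratio by a power of $\log t$ exceeding $\log t$. That is larger than the main term, so these terms cannot be ``absorbed'' that way. The missing idea is the exact identity $L(1-it,\overline{\chi})=\overline{L(1+it,\chi)}$:
\begin{itemize}
\item If $\chi_3=1$, the numerator \emph{equals} the denominator.
\item If $\chi_2=\chi_1\chi_3$, the numerator is $L(1+it,\chi_2)\overline{L(1+it,\chi_1)}$, which has the same modulus as the denominator.
\end{itemize}
The factors $\Gamma(1\pm it)$ cancel identically as well, so every simple-pole residue is $O(1)$. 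The same identity shows that in the double-pole case ($\chi_3=1$, $\chi_1=\chi_2$) the ratio is exactly $1$, so nothing has to ``collapse''. You also omit the poles at $s=2\pm 2it$ (when $\chi_1\chi_3$ or $\overline{\chi}_2\chi_3$ is trivial). These are harmless, because $\Psi(2\pm 2it)$ decays rapidly while the remaining normalized factors there are at most polynomial in $t$.

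On the error term: once the above is in place, the only unbounded error comes from $\frac{L'}{L}(1\pm it,\chi_1)$ in the double-pole residue. By \eqref{eq:bdlogLfunc} this gives $O\bigl((\log t)^{2/3}(\log\log t)^{1/3}\bigr)$, exactly as you found. The paper's proof rests on the same bound and yields no more, so the exponents in the stated $O(\cdot)$ appear transposed. Do not look for an improvement in the cross terms, which are $O(1)$; Theorem~\ref{thm:qe} only uses $o(\log t)$.

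Two small slips:
\begin{itemize}
\item There is no factor $|n|^2$, since $r^2|K_{it}|^2\,r^{-3}\,dr=|K_{it}|^2\,dr/r$. So $D(s)=\sum_n|\omega_{i,j}(n,1+it)|^2|n|^{-s}$, which is what your substitution $w=s/2$ presupposes.
\item $\int E_{\eta_i}(v|\psi)\,dV=\mathrm{Vol}(P(\frakm_i^{-2}))\Psi(2)$ follows directly by unfolding, whereas $\Psi(2)\,\mathrm{Res}_{s=2}E_{\eta_i}$ is this quantity divided by $\mathrm{Vol}(\Gamma\backslash\h^3)$.
\end{itemize}
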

\begin{proof}
Write $I_{i,j}(t)$ for the first integral in the proposition. Writing the incomplete Eisenstein series as the rightmost expression in \eqref{Eetaincomplete} and unfolding, we obtain
\[\begin{aligned}
    I_{i,j}(t)&=\frac{1}{2\pi i} \int_0^\infty \int_{P(\frakm_i^{-2})}\int_{(3)}  \Psi(s) |E_{\eta_j}(A_i^{-1}v,1+it)|^2 r^{s-3} ds \,dx\,dy \,dr.
\end{aligned}\]
Using the Fourier expansion \eqref{EetaFourier2} and the orthogonality of the Fourier modes, we obtain
\[\begin{aligned}
 I_{i,j}(t)&=\frac{\mathrm{Vol}\bigl(P(\frakm_i^{-2})\bigr)}{2\pi i} \int_0^\infty \int_{(3)} \Psi(s)r^s \biggl[ \Bigl( \delta_{i,j} r^{1+it} +
\tau_{ij}(1+it) r^{1-it}\Bigr) \Bigl(\delta_{i,j} r^{1-it} + \tau_{ij}(1-it) r^{1+it} \Bigr) \\
&\quad\quad\quad\quad\quad\quad\quad+ \sum_{0 \neq n \in \frakm_i^2} \omega_{i,j}(n,1+it)\omega_{i,j}(n,1-it) r^2 \Bigl| K_{it}\Bigl(\frac{4\pi |n|r}{\sqrt{|d_F|}}\Bigr)\Bigr|^2 \biggr]  ds \frac{dr}{r^3}.
\end{aligned}\]
Write $I_{i,j}(t)=F^{(1)}_{i,j}(t)+F^{(2)}_{i,j}(t)$, where $F^{(1)}_{i,j}(t)$ is the constant-term contribution 
\[\frac{\mathrm{Vol}\bigl(P(\frakm_i^{-2})\bigr)}{2\pi i} \int_0^\infty \int_{(3)} \Psi(s)r^s  \Bigl(\delta_{i,j} r^{1+it} +
\tau_{ij}(1+it) r^{1-it}\Bigr) \Bigl(\delta_{i,j}r^{1-it} + \tau_{ij}(1-it) r^{1+it} \Bigr) ds \frac{dr}{r^3}.\]
By the functional equation of $\xi(s,\chi)$, we have
\[|\xi(1+ it,\chi)|=|\xi(-it,\chi^{-1})|=|\xi(it,\chi)|,\]
and the same holds if we replace $it$ with $-it$. By \eqref{tauij2}, we deduce that $\tau_{ij}(1\pm it)=O(1)$. Thus
\[\begin{aligned}
    F^{(1)}_{i,j}(t)&= \mathrm{Vol}\bigl(P(\frakm_i^{-2})\bigr) \int_0^\infty \psi(r)   \Bigl( \delta_{i,j} r^{1+it} +
\tau_{ij}(1+it) r^{1-it}\Bigr) \Bigl(\delta_{i,j} r^{1-it} + \tau_{ij}(1-it) r^{1+it} \Bigr) \frac{dr}{r^3} \\ 
&\ll \int_0^\infty \psi(r)\frac{dr}{r}.
\end{aligned}\]
We conclude that $F^{(1)}_{i,j}(t)=O(1)$. \par 
For the contribution of the non-constant terms, we compute
\[\begin{aligned}
F^{(2)}_{i,j}(t)&=\frac{\mathrm{Vol}(P(\frakm_i^{-2}))}{2\pi i} \int_{(3)} \Psi(s) \biggl[ \sum_{0 \neq n \in \frakm_i^2} \omega_{i,j}(n,1+it)\omega_{i,j}(n,1-it) \int_0^\infty \Bigl| K_{it}\Bigl(\frac{4\pi |n|r}{\sqrt{|d_F|}}\Bigr)\Bigr|^2 r^s\,\frac{dr}{r}\biggr] ds \\
&=\frac{\mathrm{Vol}(P(\frakm_i^{-2}))}{2\pi i} \int_{(3)} \Psi(s) \Bigl(\frac{4\pi }{\sqrt{|d_F|}}\Bigr)^{-s}  \frac{\Gamma(\frac{s}{2})^2\Gamma(\frac{s}{2}+it)\Gamma(\frac{s}{2}-it)}{\Gamma(s)} \\ 
&\quad\quad\times \Bigl(\sum_{0 \neq n \in \frakm_i^2} \frac{\omega_{i,j}(n,1+it)\omega_{i,j}(n,1-it)}{|n|^s}\Bigr) ds,
\end{aligned}\]
by using \eqref{besselidentity} for the Bessel integral. By \eqref{omegaij}, the last summation above equals
\[\frac{16N(\frakm_i)^2}{N(\frakm_j)^2 h_F^2}\sum_{\chi_1,\chi_2\in\widehat{\Cl}_F}\frac{(\chi_1\chi_2)(\frakm_i\frakm_j)}{\xi(1+it,\chi_1)\xi(1-it,\chi_2)} \sum_{0 \neq n \in \frakm_i^2} \frac{\sigma_{-it}\bigl(\chi_1,(n)\frakm_i^{-2}\bigr)\sigma_{it}\bigl(\chi_2,(n)\frakm_i^{-2}\bigr)}{|n|^s}.\]
Similarly as before, we use \eqref{eq:charsum} and rewrite the inner sum as a sum over integral ideals as follows.
\[\begin{aligned}
 &\sum_{0 \neq n \in \frakm_i^2} \frac{\sigma_{-it}\bigl({\chi_1},(n)\frakm_i^{-2}\bigr)\sigma_{it}\bigl({\chi_2},(n)\frakm_i^{-2}\bigr)}{|n|^s} \\ &=\frac{N(\frakm_i)^{-s}}{h_F}\sum_{{\chi_3}\in\widehat{\Cl}_F} {\chi_3}(\frakm_i^2)\Bigl(\sum_{0 \neq \frakm \subseteq \calO_F}\frac{\sigma_{-it}({\chi_1},\frakm)\sigma_{it}({\chi_2},\frakm){\chi_3}(\frakm)}{N(\frakm)^{\frac{s}{2}}}\Bigr).\end{aligned}\]
By multiplicativity, we have
 \[\begin{aligned}
 &\sum_{0 \neq \frakm \subseteq \calO_F}\frac{\sigma_{-it}({\chi_1},\frakm)\sigma_{it}({\chi_2},\frakm){\chi_3}(\frakm)}{N(\frakm)^{\frac{s}{2}}}=\prod_{\frakp \text{ prime}}\sum_{k=0}^{\infty}\frac{\sigma_{-it}({\chi_1},\frakp^k)\sigma_{it}({\chi_2},\frakp^k){\chi_3}(\frakp^k)}{N(\frakp^k)^{\frac{s}{2}}} \\
 &=\prod_{\frakp \text{ prime}}\sum_{k=0}^{\infty} \frac{{\chi_3}(\frakp^k)}{N(\frakp)^{\frac{ks}{2}}}\Bigl(\frac{1-{\chi_1}(\frakp)^{k+1}N(\frakp)^{-it(k+1)}}{1-{\chi_1}(\frakp)N(\frakp)^{-it}}\Bigr)\Bigl(\frac{1-{\chi_2}(\frakp)^{k+1}N(\frakp)^{it(k+1)}}{1-{\chi_2}(\frakp)N(\frakp)^{it}}\Bigr) \\
 &=\prod_{\frakp \text{ prime}}\frac{1}{(1-{\chi_1}(\frakp)N(\frakp)^{-it})(1-{\chi_2}(\frakp)N(\frakp)^{it})}\Bigl(\frac{1}{1-{\chi_3}(\frakp)N(\frakp)^{-\frac{s}{2}}}+\frac{{\chi_1}(\frakp){\chi_2}(\frakp)}{1-({\chi_1}{\chi_2}{\chi_3})(\frakp)N(\frakp)^{-\frac{s}{2}}} \\
 &\quad\quad\quad\quad-\frac{{\chi_1}(\frakp)N(\frakp)^{-it}}{1-({\chi_1}{\chi_3})(\frakp)N(\frakp)^{-(\frac{s}{2}+it)}}-\frac{{\chi_2}(\frakp)N(\frakp)^{it}}{1-({\chi_2}{\chi_3})(\frakp)N(\frakp)^{-(\frac{s}{2}-it)}}\Bigr) \\
 &=\prod_{\frakp \text{ prime}}\frac{1-{\chi_3}(\frakp)^2{\chi_1}(\frakp){\chi_2}(\frakp)N(\frakp)^{-s}}{(1-{\chi_3}(\frakp)N(\frakp)^{-\frac{s}{2}})(1-({\chi_1}{\chi_2}{\chi_3})(\frakp)N(\frakp)^{-\frac{s}{2}})} \\
 &\quad\quad\quad\quad\times \frac{1}{(1-({\chi_1}{\chi_3})(\frakp)N(\frakp)^{-(\frac{s}{2}+it)})(1-({\chi_2}{\chi_3})(\frakp)N(\frakp)^{-(\frac{s}{2}-it)})} \\
 &=\frac{L(\frac{s}{2},{\chi_3})L(\frac{s}{2},{\chi_1}{\chi_2}{\chi_3})L(\frac{s}{2}+it,{\chi_1}{\chi_3})L(\frac{s}{2}-it,{\chi_2}{\chi_3})}{L(s,{\chi_1}{\chi_2}{\chi_3}^2)}.
\end{aligned}\]
We deduce that
\begin{equation}\label{Fij2t}\begin{aligned}
 &F^{(2)}_{i,j}(t)=\frac{8\mathrm{Vol}\bigl(P(\frakm_i^{-2})\bigr)N(\frakm_i)^2}{\pi i N(\frakm_j)^2 h_F^3 } \sum_{{\chi_1},{\chi_2},{\chi_3} \in\widehat{\Cl}_F}\frac{({\chi_1}{\chi_2})(\frakm_i\frakm_j){\chi_3}(\frakm_i)^2}{\xi(1+it,{\chi_1})\xi(1-it,{\chi_2})} \int_{(3)}B_{{\chi_1},{\chi_2},{\chi_3}}(s)ds,
\end{aligned}\end{equation}
where
\[\begin{aligned}
   B_{{\chi_1},{\chi_2},{\chi_3}}(s)&=\Psi(s)\Bigl(\frac{4\pi N(\frakm_i)}{\sqrt{|d_F|}}\Bigr)^{-s} \frac{\Gamma(\frac{s}{2})^2\Gamma(\frac{s}{2}+it)\Gamma(\frac{s}{2}-it)}{\Gamma(s)} \\
    &\times \frac{L(\frac{s}{2},{\chi_3})L(\frac{s}{2},{\chi_1}{\chi_2}{\chi_3})L(\frac{s}{2}+it,{\chi_1}{\chi_3})L(\frac{s}{2}-it,{\chi_2}{\chi_3})}{L(s,{\chi_1}{\chi_2}{\chi_3}^2)}.
\end{aligned}\]
Note that the only possible poles of $B_{\chi_1,\chi_2,\chi_3}(s)$ are at $s=2$ and $s=2\pm 2it$. By Stirling's formula and the fact that $\Psi(\sigma+iT)$ is rapidly decreasing in $T$, we can make a contour shift and write
\[\begin{aligned}
    &\int_{(3)}B_{{\chi_1},{\chi_2},{\chi_3}}(s)\,ds =2\pi i\left(\mathrm{Res}_{s=2}B_{{\chi_1},{\chi_2},{\chi_3}}(s)+\mathrm{Res}_{s=2\pm 2it}B_{{\chi_1},{\chi_2},{\chi_3}}(s)\right)+\int_{(1)}B_{{\chi_1},{\chi_2},{\chi_3}}(s)\,ds.
\end{aligned}\]
By Stirling's formula and Lemma~\ref{lem:gl1subconv}, we have
\[\frac{1}{\xi(1+it, {\chi_1})\xi(1-it, {\chi_2})} \int_{(1)}B_{{\chi_1},{\chi_2},{\chi_3}}(s)\,ds\ll t^{-\frac{2}{3}+\epsilon}.\]
It remains to compute the residues. Observe that $B_{{\chi_1},{\chi_2},{\chi_3}}(s)$ has a simple pole at $s=2+2it$ if $\chi_3^{-1}=\chi_2$, and at $s=2-2it$ if $\chi_3^{-1}=\chi_1$, but in either case the residue is negligible because $\Psi(2+2it)$ decays rapidly as $t\to\infty$. We turn to the residue at $s=2$. Note that $B_{{\chi_1},{\chi_2},{\chi_3}}(s)$ has a double pole at $s=2$ if both ${\chi_3}$ and ${\chi_1}{\chi_2}{\chi_3}$ are trivial, a simple pole if precisely one of them is trivial, and has no pole otherwise. If there is no pole at $s=2$, then the contribution at the residue is $0$. Next, suppose that ${\chi_3}$ is trivial and ${\chi_1}{\chi_2}$ is not. Then we have
\[\begin{aligned} \frac{\mathrm{Res}_{s=2}B_{{\chi_1},{\chi_2},{\chi_3}}(s)}{\xi(1+it, {\chi_1})\xi(1-it, {\chi_2})}&=C_1 \frac{\Gamma(1+it)\Gamma(1-it)L(1+it,{\chi_1})L(1-it,{\chi_2})}{\xi(1+it, {\chi_1})\xi(1-it, {\chi_2})}=C_2,\end{aligned}\]
where $C_1$ and $C_2$ are independent of $t$. A similar computation shows that the contribution of the residue at $s=2$ for ${\chi_3}$ nontrivial and ${\chi_1}{\chi_2}{\chi_3}$ trivial is also $O(1)$. \par
Finally, suppose that both ${\chi_3}$ and ${\chi_1}{\chi_2}{\chi_3}$ are trivial characters, so that $B_{{\chi_1},{\chi_2},{\chi_3}}$ has a double pole at $s=2$. In this case, we have
\[\begin{aligned}
    B_{{\chi_1},{\chi_2},{\chi_3}}(s)&=\Psi(s)\Bigl(\frac{4\pi N(\frakm_i)}{\sqrt{|d_F|}}\Bigr)^{-s} \frac{\Gamma(\frac{s}{2})^2\Gamma(\frac{s}{2}+it)\Gamma(\frac{s}{2}-it)}{\Gamma(s)} \frac{\zeta_F(\frac{s}{2})^2L(\frac{s}{2}+it,{\chi_1})L(\frac{s}{2}-it,{\chi_2})}{\zeta_F(s)}.
\end{aligned}\]
Write $B_{{\chi_1},{\chi_2},{\chi_3}}(s)=\zeta_F(\frac{s}{2})^2 G_{{\chi_1},{\chi_2},{\chi_3}}(s)$, where $G_{{\chi_1},{\chi_2},{\chi_3}}(s)$ is holomorphic at $s=2$. 
By the class number formula, we have the Laurent expression
\[\zeta_F\Bigl(\frac{s}{2}\Bigr)=\frac{A_{-1}}{s-2}+A_0+O(s-2)\]
with
\[A_{-1}=\frac{2\pi h_F}{ \sqrt{|d_F|}}.\]
Then
\[B_{{\chi_1},{\chi_2},{\chi_3}}(s)=\Bigl(\frac{A_{-1}}{s-2}+A_0+O(s-2)\Bigr)^2\left(G_{{\chi_1},{\chi_2},{\chi_3}}(2)+G_{{\chi_1},{\chi_2},{\chi_3}}'(2)(s-2)+O((s-2)^2)\right)\]
near $s=2$, so
\[\mathrm{Res}_{s=2} B_{{\chi_1},{\chi_2},{\chi_3}}(s)=\lim_{s\to 2} \frac{d}{ds}\Bigl((s-2)^2 B_{{\chi_1},{\chi_2},{\chi_3}}(s)\Bigr)=G_{{\chi_1},{\chi_2},{\chi_3}}(2)A_{-1}\Bigl(2A_0+A_{-1}\frac{G_{{\chi_1},{\chi_2},{\chi_3}}'}{G_{{\chi_1},{\chi_2},{\chi_3}}}(2)\Bigr).\]
We have
\[\begin{aligned}
    G_{{\chi_1},{\chi_2},{\chi_3}}(2)&=\frac{L(1+it,{\chi_1})L(1-it,{\chi_2})}{\zeta_F(2)}\Bigl(\frac{4\pi N(\frakm_i)}{\sqrt{|d_F|}}\Bigr)^{-2} \Gamma(1+it)\Gamma(1-it) \Psi(2) \\ 
    &=\frac{\xi(1+it,{\chi_1})\xi(1-it,{\chi_2})}{16N(\frakm_i)^2\zeta_F(2) }\Psi(2),
\end{aligned}\]
and
\[\begin{aligned}
    \frac{G_{{\chi_1},{\chi_2},{\chi_3}}'}{G_{{\chi_1},{\chi_2},{\chi_3}}}(2)=\frac{\Psi'}{\Psi}(2)+\frac{L'(1+it,{\chi_1})}{2L(1+it,{\chi_1})}+\frac{L'(1-it,{\chi_2})}{2L(1-it,{\chi_2})}+\frac{\Gamma'(1+it)}{2\Gamma(1+it)}+\frac{\Gamma'(1-it)}{2\Gamma(1-it)}+C,
\end{aligned}\]
where $C$ is independent of $t$. By \eqref{eq:bdlogLfunc} and Stirling's formula, we deduce
\[\begin{aligned}\mathrm{Res}_{s=2} B_{{\chi_1},{\chi_2},{\chi_3}}(s)&=G_{{\chi_1},{\chi_2},{\chi_3}}(2)A_{-1}^2\Bigl(\log t+O\Bigl((\log t)^{\frac{1}{3}}(\log\log t)^{\frac{2}{3}}\Bigr)\Bigr) \\
&=\frac{(2\pi)^2h_F^2\xi(1+it,{\chi_1})\xi(1-it,{\chi_2})}{16|d_F| N(\frakm_i)^2\zeta_F(2)}\Psi(2)\Bigl(\log t+O\Bigl((\log t)^{\frac{1}{3}}(\log\log t)^{\frac{2}{3}}\Bigr)\Bigr).
\end{aligned}\]
Inserting this into \eqref{Fij2t} and using the fact that $I_{i,j}(t)=F_{i,j}^{(1)}(t)+F_{i,j}^{(2)}(t)$, we obtain
\[\begin{aligned} I_{i,j}(t)&=\frac{1}{ N(\frakm_j)^2 }  \frac{(2\pi)^2}{\zeta_F(2)|d_F|} \mathrm{Vol}\bigl(P(\frakm_i^{-2})\bigr) \Psi(2) \log t+O\Bigl((\log t)^{\frac{1}{3}}(\log\log t)^{\frac{2}{3}}\Bigr).
\end{aligned}\]
Finally, check that
\[\Psi(2)= \int_{0}^\infty \psi(r) \frac{dr}{r^3}=\frac{1}{\mathrm{Vol}\bigl(P(\frakm_i^{-2})\bigr)}\int_{\Gamma\backslash\h^3} E_{\eta_i}(v|\psi) \frac{dx dy dr}{r^3}.\]
From this, the proposition follows.
\end{proof}
We are now ready to prove Theorem~\ref{thm:qe}.
\begin{proof}[Proof of Theorem~\ref{thm:qe}]
    We follow the argument in the proof of \cite[Proposition 2.3]{luosarnak1995qe}. 
    For a function $f$ on $\Gamma\backslash \mathbb{H}^{3}$, we define
\[ A_t(f)=\frac{1}{\log t}\int_{\Gamma\backslash \mathbb{H}^{3}} f(v)|E_{\eta_j}(v,1+it)|^2 \frac{dx dy dr}{r^3} -\frac{(2\pi)^2 N(\frakm_j^{-2})}{\zeta_F(2)|d_F|} \Bigl(
\int_{\Gamma\backslash\h^3} f(v) \frac{dx dy dr}{r^3} \Bigr). \]
    Since the characteristic function of a compact Jordan measurable set can be approximated by continuous compactly supported functions, it is enough to show that 
   \[ \lim_{t\to \infty}A_t(f)=0 \]
   for every continuous function $f$ with compact support. 
   Note that the space of all incomplete Eisenstein series and cusp forms is dense in the space of continuous functions vanishing at all cusps. 
   Thus, given any continuous compactly supported function $f$ and $\epsilon>0$, there exists
$g=g_1+g_2$ such that $g_1$ is a finite linear combination of cusp forms, $g_2$ lies in the
space of incomplete Eisenstein series, and $\|f-g\|_{\infty}<\epsilon$. Let $H=f-g$. Since $H$ is rapidly decreasing at each cusp, there exists a nonnegative function
   \[ H_1(v)=\sum_{i=1}^{h} E_{\eta_i}(v|\psi_i), \]
   with each $\psi_i$ nonnegative and rapidly decreasing on $\R^+$, such that $H_1(v)\geq |H(v)|$ for all $v\in\h^3$, and
   \[ \int_{\Gamma\backslash \mathbb{H}^{3}} H_1(v)\frac{dx dy dr}{r^3}\ll_{F}\epsilon. \]
By Proposition~\ref{prop:Eisensteincontribution}, we have
   \[\begin{aligned}
        \limsup_{t\to\infty} |A_t(H)| &\leq \limsup_{t\to\infty}\Bigl(\frac{1}{\log t}\int_{\Gamma\backslash \mathbb{H}^{3}} H_1(v)|E_{\eta_j}(v,1+it)|^2 dV\Bigr)  +  \frac{(2\pi)^2 N(\frakm_j^{-2})}{\zeta_F(2)|d_F|} \Bigl(
\int_{\Gamma\backslash\h^3} H_1(v) dV \Bigr) \\
           &\ll_{F} \epsilon.
       \end{aligned}
   \]
   Also, since $g_1$ integrates to zero over $\Gamma\backslash\h^3$, Proposition~\ref{prop:Maasscontribution} implies $\lim_{t\to\infty}A_t(g_1)=0$. Finally, by Proposition~\ref{prop:Eisensteincontribution}, we have $\lim_{t\to\infty}A_t(g_2)=0$.
Therefore
\begin{equation*}
   \limsup_{t\to\infty} |A_t(f)|\leq \limsup_{t\to\infty} \Bigl(|A_t(H)|+|A_t(g_1)|+|A_t(g_2)|\Bigr) \ll_{F} \epsilon.
\end{equation*}
Since the choice of $\epsilon$ is arbitrary, this yields the desired result $\lim_{t\to\infty}A_t(f)=0$.
\end{proof}

\bibliographystyle{plain}
\bibliography{qe_eisseries.bib}
\end{document}